\theoremstyle{plain}
\newtheorem{Theorem}{Theorem}[section]
\newtheorem{Proposition}[Theorem]{Proposition}
\newtheorem{Lemma}[Theorem]{Lemma}
\newtheorem{Corollary}[Theorem]{Corollary}
\newtheorem{Properties}[Theorem]{Properties}
\newtheorem{Conjecture}[Theorem]{Conjecture}
\theoremstyle{definition}
\newtheorem{Definition}[Theorem]{Definition}
\newtheorem{Example}[Theorem]{Example}
\newtheorem{Examples}[Theorem]{Examples}
\newtheorem{Remark}[Theorem]{Remark}
\newcommand{\cat}[1]{\text{\rm cat}\left(#1 \right)}
\newcommand{\TC}[1]{\text{\rm TC}\left(#1 \right)}
\newcommand{\secat}[1]{\text{\rm secat}\left(#1 \right)}
\newcommand{\secatg}[1]{\text{\rm secat}_g\left(#1 \right)}
\newcommand{\relTC}[2]{\text{\rm TC}_{#1} \left(#2 \right)}
\newcommand{\pullTC}[1]{\text{\rm TC} \left( #1 \right)}
\newcommand{\halfTC}[1]{\text{\rm TC}^{\frac{1}{2}} \left( #1 \right)}
\newcommand{\robotTC}[1]{\text{\rm TC}^{ra} \left( #1 \right)}
\newcommand{\pairTC}[2]{\text{\rm TC} \left( #1, #2 \right)}
\newcommand{\MTC}[1]{\text{\rm TC}^M \left( #1 \right)}
\newcommand{\cuplength}[1]{\text{\rm cup} \left( #1 \right)}
\newcommand{\subsize}[1]{\scaleto{#1}{4pt}}
\newcommand{\cd}[1]{\text{\rm cd} \left( #1 \right)}
\title{On the Topological Complexity of  Maps}
\author{Jamie Scott}
\begin{document}
\maketitle

\begin{abstract}
We define and develop a homotopy invariant notion for the topological complexity of a map $f:X \to Y$, denoted $\pullTC{f}$, that interacts with $\TC{X}$ and $\TC{Y}$ in the same way $\cat{f}$ interacts with $\cat{X}$ and $\cat{Y}$. Furthermore, $\pullTC{f}$ and $\cat{f}$ satisfy the same inequalities as $\TC{X}$ and $\cat{X}$. We  compare it to other invariants defined in the papers \cite{Murillo_Wu, Pavesic_1,Pavesic_2,Pavesic_3, Short}. We apply $\pullTC{f}$ to studying group homomorphisms $f:H\to G$.
\end{abstract}

\section{Introduction}

Let $X$ denote the configuration space of a mechanical system, then a navigation algorithm of the system is a map $\sigma:X\times X\to X^I$, $I=[0,1]$, to the path space such that $\sigma(x_0,x_1)(0)=x_0$ and $\sigma(x_0,x_1)(1)=x_1$. Clearly, if there exists a continuous navigation algorithm, then $X$ is contractible. Since configuration spaces are rarely contractible, one has to do partitioning of $X\times X$ into  pieces such that on each piece there is a continuous navigation. M. Farber called the minimal number of such pieces \cite{Farber_2} (also see \cite{Farber_4}) the {\em topological complexity} of $X$
and denoted it by $\TC{X}$. Formally, $\TC{X}$ is the minimal number $k$ such that
$X\times X$ can be covered by open sets $U_0,\dots, U_k$ such that the path space fibration $p:X^I \to X \times X$ admits a section on each $U_i$.
Here we gave the definition of the reduced topological complexity which differs by one from the original.
This invariant was intensively studied in the last 20 years.

When the mechanical system is a robotic arm the $\TC{X}$ input on the navigation problem is not quite satisfactory. In the case of robotic arm one has a map $f:X\to Y$ of the configuration space $X$ to the work space $Y$. In that case in his lectures at the workshop on Applied Algebraic Topology in Castro Urdiales (Spain, 2014) A. Dranishnikov suggested to study the topological complexity of maps $\TC{f}$ and defined it as the minimal number $k$ such that $X\times Y$ can be covered by open sets $U_0,\dots, U_k$ such that the over each $U_i$ there is a section to the map $q:X^I\to X\times Y$ defined as
$q(\phi)=(\phi(0),f\phi(1))$ (see~\cite{Pavesic_1}). The drawback of this definition is that it can be applied  only to maps $f:X\to Y$ that have the path lifting property. All path fibrations and left divisor of path fibrations have this property, but
it is unclear if the forward kinematic maps for general robotic arms are left divisors of fibrations. In~\cite{Pavesic_2, Pavesic_3} P. Pavesic modified this definition to cover general maps, but still his definition  lacks one important feature: It is not a homotopy invariant.

Topological complexity is a homotopy invariant similar to LS-category, in that both can be characterized as the sectional category of a particular fibration, i.e., the topological complexity of a path-connected space $X$ is the sectional category of the path space fibration $X^I \to X \times X$, and the LS-category of $X$, denoted $\cat{X}$, is the sectional category of the based path space fibration $P_0X \to X$. Both of these invariants are notoriously difficult to compute, with lower bounds usually being the more difficult problem.

In the case of LS-category, the notion was extended to maps $f:X \to Y$ satisfying the inequality $\cat{f} \leq \cat{X}, \cat{Y}$. It proved to be helpful in computing the LS-category of a space \cite{Stanley} and it is homotopy invariant. 
The goal of this paper is to define and study analogous an invariant $\TC{f}$
for topological complexity. There was no  invariant occupying this theoretical niche. Only very recently when this work was in progress
A. Murillo and J. Wu defined their own notion of $\TC{f}$ in \cite{Murillo_Wu}, which I will later prove is equivalent to the notion presented here. Notably, with the perspective Murillo and Wu took, they were unable to prove the inequality $\TC{f} \leq \TC{Y}$.

Since cat and TC are homotopy invariants, they produce numerical invariant of discrete groups. In the case of the category by the theorem of Eilenberg and Ganea $\cat{G}$ is just the cohomological dimension of a group $G$. An algebraic description of $\TC{G}$ of a group $G$
is one of the most important problems in the area~\cite{Farber_Mescher}. In this paper I applied both
invariants cat and TC to group homomorphisms. Even in the case of cat, 
despite on easy answer for groups, there is no easy answer for homomorphisms.
In Section 6, we present a conjecture on that (Conjecture \ref{LS Conjecture}) which 
grew out of discussion of the problem with A. Dranishnikov. The conjecture is proved in the paper for free groups and free abelian groups. In the case of topological complexity I don't have any conjecture
of what $\TC{f}$ is for a group homomorphism $f:H\to G$, since there is no corresponding theorem for the topological complexity of groups, $\TC{G}$. Again, when both $G$ and $H$ are free or free abelian we give the answer in the paper.

The paper is organized as follows.
In section 3, we give a definition for $\TC{f}$ that takes inspiration from the fact that $\cat{f}$ is equal to the sectional category of the pullback fibration $f^\ast \pi^Y$, where $\pi^Y:P_0Y \to Y$ is the based path space fibration (see Theorem 19$'$ of \cite{Schwarz}). Thus, we can define $\TC{f}$ to be the sectional category of the pullback of the path space fibration $Y^I \to Y \times Y$ by $f \times f$. As this definition is a bit theoretical in nature, we take inspiration from the various formulations of $\TC{X}$ to define $f$-motion planners and instead start section 3 with this version of the definition. Here we prove various properties for $\TC{f}$, including homotopy invariance, inequalities mirroring those for $\cat{f}$, and inequalities that extend the interaction between $\cat{X}$ and $\TC{X}$ to maps.

In section 4, we consider instead the fibration obtained as the pullback of the path space fibration by $Id_Y \times f$. In \cite{Pavesic_3}, Pave\u{s}i\'c proved that the sectional category of this fibration is equal to his complexity, denoted here as $\robotTC{f}$, if $f$ is a fibration. Here we study this sectional category without that assumption as an intermediary step to get to the pullback by $f \times f$. We call it the mixed topological complexity of $f$, denoted $\halfTC{f}$. Section 4 is structured similarly to section 3, but at the end we prove the inequalities $\pullTC{f} \leq \halfTC{f} \leq \robotTC{f}$.

The use of $f$-motion planners as a means of defining $\pullTC{f}$ inspires definitions for symmetric and monoidal TC of maps, so section 5 of this paper proves some basic properties for the latter and leaves the former for future work.

The final section of this paper uses the homotopy invariance of $\pullTC{f}$ to define the topological complexity of group homomorphisms, as discussed above.

\section{Preliminaries}

\subsection{Sectional Category}

\begin{Definition}
Let $p:E \to B$ be a fibration. Then the {\it sectional category} of $p$, denoted $\secat{p}$, is the smallest integer $k$ such that $B$ can be covered by $k+1$ open sets $U_0,...,U_k$ that each admit a partial section $s_i:U_i \to E$ of $p$. If no such integer $k$ exists, then we set $\secat{p} = \infty$.
\end{Definition}

\begin{Proposition} \label{genus invariance}
Let $p:E \to B$ and $p':E' \to B'$ be fibrations. If there is a homotopy commutative diagram
\[\begin{tikzcd}
E \arrow[r, "\overline{h}"] \arrow[d, swap, "p"]
& E' \arrow[d, "p'"]
\\
B \arrow[r, swap, "h"]
& B'
\end{tikzcd}\]
such that $h$ is a homotopy domination, then $\secat{p'} \leq \secat{p}$.
\end{Proposition}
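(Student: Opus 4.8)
The plan is to start from a sectional-category cover of $p$ and transport it to $B'$ using the homotopy section of $h$. Write $\secat{p}=k$ (if it is infinite there is nothing to prove) and fix an open cover $U_0,\dots,U_k$ of $B$ together with partial sections $s_i\colon U_i\to E$ satisfying $p\circ s_i=\iota_{U_i}$, where $\iota_{U_i}$ denotes the inclusion. Since $h$ is a homotopy domination, choose $g\colon B'\to B$ with $h\circ g\simeq \mathrm{id}_{B'}$, and set $V_i=g^{-1}(U_i)$; these form an open cover of $B'$ with $k+1$ members. The candidate map to be corrected into a section of $p'$ over $V_i$ is $\phi_i=\overline h\circ s_i\circ (g|_{V_i})\colon V_i\to E'$.

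Next I would check that $\phi_i$ is a section of $p'$ up to homotopy. Homotopy commutativity of the square gives $p'\circ\overline h\simeq h\circ p$, hence
\[
p'\circ\phi_i = p'\circ\overline h\circ s_i\circ(g|_{V_i}) \simeq h\circ p\circ s_i\circ(g|_{V_i}) = h\circ g|_{V_i} \simeq \mathrm{id}_{B'}|_{V_i} = \iota_{V_i},
\]
so there is a homotopy $H\colon V_i\times I\to B'$ with $H_0=p'\circ\phi_i$ and $H_1=\iota_{V_i}$.

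The last step uses that $p'$ is a fibration. Apply the homotopy lifting property to $H$ with initial lift $\phi_i$: this produces $\widetilde H\colon V_i\times I\to E'$ with $\widetilde H_0=\phi_i$ and $p'\circ\widetilde H=H$. Then $s_i' := \widetilde H_1\colon V_i\to E'$ satisfies $p'\circ s_i' = H_1 = \iota_{V_i}$, i.e.\ it is a genuine partial section of $p'$ over $V_i$. Since the $V_i$ cover $B'$ and there are $k+1$ of them, we conclude $\secat{p'}\le k=\secat{p}$.

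I do not anticipate a serious obstacle; the only delicate point is that a map which is merely a section "up to homotopy" need not be an actual section, which is precisely why the hypothesis that $p'$ (rather than $p$) is a fibration is invoked, through the homotopy lifting property, in the final step. It is also worth remarking that the fibration hypothesis on $p$ itself plays no role in this direction.
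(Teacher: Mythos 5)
Your argument is correct and follows essentially the same route as the paper: pull back the cover along the homotopy right inverse $g$, push the sections through $\overline{h}$, and observe that $p'\circ\overline{h}\circ s_i\circ g|_{V_i}$ is homotopic to the inclusion of $V_i$. The only difference is that the paper stops at the statement that $s_i'$ is a homotopy section of $p'$, leaving implicit the final correction, whereas you spell out the use of the homotopy lifting property of $p'$ to replace the homotopy section by a genuine partial section, which is a worthwhile clarification (and your remark that the fibration hypothesis on $p$ is not needed here is accurate).
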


\begin{proof}
Let $g$ be the right homotopy inverse of $h$, and let $\secat{p}=k$ so that there is an open cover $U_0,...,U_k$ of $B$ and partial sections $s_i:U_i \to E$ of $p$. Let $V_i = g^{-1}(U_i)$, which forms an open cover of $B'$. Now define $s'_i:V_i \to E'$ by $s'_i = \overline{h} s_i g|_{V_i}$. Then
\[ p' s'_i
= p' \overline{h} s_i g|_{V_i}
\simeq h p s_i g|_{V_i}
= h (Id_B|_{U_i}) g|_{V_i}
= h g|_{V_i}\]
but $hg$ is homotopic to $Id_{B'}$ so that $p's'_i$ is homotopic to the inclusion map $V_i \to B'$; hence, $s'_i$ is a homotopy section of $p'$. Therefore, $\secat{p'} \leq \secat{p}$.
\end{proof}

The following is proven in Proposition 7 of \cite{Schwarz}:

\begin{Proposition} \label{sectional pullback}
Let $p:E \to B$ be a fibration, let $f:B' \to B$ be any map, and consider the pullback diagram
\[\begin{tikzcd}
f^\ast E \arrow[r, "\overline{f}"] \arrow[d, swap, "f^\ast p"]
& E \arrow[d, "p"]
\\
B \arrow[r, swap, "f"]
& B'
\end{tikzcd}\]
Then $\secat{f^\ast p} \leq \secat{p}$.
\end{Proposition}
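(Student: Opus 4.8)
The plan is to transport an optimal sectional cover from the base $B$ down to $B'$ by taking preimages under $f$. First I would set $\secat{p} = k$ and fix an open cover $U_0, \dots, U_k$ of $B$ together with partial sections $s_i : U_i \to E$, so that $p \circ s_i$ is the inclusion $U_i \hookrightarrow B$. Since $f$ is continuous, the sets $V_i = f^{-1}(U_i)$ are open and cover $B'$, so it suffices to produce a partial section of $f^\ast p$ over each $V_i$.

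To build $s_i' : V_i \to f^\ast E$ I would invoke the universal property of the pullback. Writing $j_i : V_i \hookrightarrow B'$ for the inclusion, I need a pair of maps $V_i \to B'$ and $V_i \to E$ whose composites into $B$ (via $f$ and via $p$ respectively) agree; the pair $\big(j_i,\, s_i \circ f|_{V_i}\big)$ works, since
\[ p \circ s_i \circ f|_{V_i} = \big(U_i \hookrightarrow B\big) \circ f|_{V_i} = f \circ j_i. \]
This yields a continuous map $s_i'$ with $f^\ast p \circ s_i' = j_i$, i.e.\ a genuine partial section of $f^\ast p$ over $V_i$. Hence $B'$ is covered by $k+1$ open sets each admitting a partial section of $f^\ast p$, so $\secat{f^\ast p} \leq k = \secat{p}$.

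There is essentially no serious obstacle here; the only points requiring care are bookkeeping rather than conceptual. One should note that $f^\ast p$ is again a fibration, being a pullback of the fibration $p$, so that $\secat{f^\ast p}$ makes sense in the context of the preceding definition. One should also be mindful of whether ``partial section'' means a strict section or only a homotopy section: in the strict formulation the argument above is complete as written, while if the $s_i$ are only homotopy sections I would first upgrade them to strict sections using the homotopy lifting property of $p$ (or, equivalently, push the homotopies through $\overline{f}$ and lift them along the fibration $f^\ast p$), after which the same conclusion follows. This is exactly the content of Proposition 7 of \cite{Schwarz}, so one could alternatively just cite it, but the pullback argument is short enough to record.
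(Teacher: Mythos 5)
Your proposal is correct and follows essentially the same route as the paper: pull back the optimal cover along $f$ to get $V_i = f^{-1}(U_i)$ and use the universal property of the pullback with the pair $\bigl(j_i, s_i \circ f|_{V_i}\bigr)$ to obtain strict partial sections of $f^\ast p$. The extra remarks on $f^\ast p$ being a fibration and on strict versus homotopy sections are fine but not needed, since the paper works with strict sections throughout.
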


\begin{proof}
Suppose $\secat{p}=k$ and let $U_0,...,U_k$ be an open cover of $B$ with sections $s_i:U_i \to E$ of $p$. Let $V_i=f^{-1}(U_i)$. Using the pullback diagram, there are maps $s_i':V_i \to f^\ast E$ such that the diagrams
\[\begin{tikzcd} [column sep = 3em, row sep = 3 em]
V_i \arrow[drr, shift left, "s_if"] \arrow[dr, "s_i'"] \arrow[ddr, hook, shift right]
\\
& f^\ast E \arrow[r, swap, "\overline{f}"] \arrow[d, "f^\ast p"]
& E \arrow[d, "p"]
\\
& B \arrow[r, swap, "f"]
& B'
\end{tikzcd}\]
commute. Therefore, each $s_i'$ is a section of $f^\ast p$ on $V_i$ so that $\secat{f^\ast p} \leq \secat{p}$.
\end{proof}

The following product formula is Proposition 22 of \cite{Schwarz}, but a more understandable proof is give by Theorem 11 of \cite{Farber_2} for the case of topological complexity. The conditions here are weaker than those of \cite{Farber_2} and \cite{Schwarz}, but the only necessary condition for the proof is that of normality as it ensures the existence of partitions of unity subordinate to finite open covers.

\begin{Proposition} \label{secat product}
Let $p:E \to B$ and $p':E' \to B'$ be fibrations such that $B$ and $B'$ are normal. Then $\secat{p \times p'} \leq \secat{p} + \secat{p'}$.
\end{Proposition}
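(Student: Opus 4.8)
The plan is to prove the product inequality by combining sections coordinate-wise and gluing with a partition of unity, exactly as in the classical argument for $\TC{}$ and $\secat{}$. Suppose $\secat{p} = k$ and $\secat{p'} = k'$, and pick open covers $U_0, \dots, U_k$ of $B$ with partial sections $s_i$ of $p$, and $U'_0, \dots, U'_{k'}$ of $B'$ with partial sections $s'_j$ of $p'$. For each pair $(i,j)$ the product $U_i \times U'_j$ is an open subset of $B \times B'$ on which $s_i \times s'_j$ is a partial section of $p \times p'$. This gives an open cover of $B \times B'$ by $(k+1)(k'+1)$ sets, each with a section, which only shows $\secat{p \times p'} \leq (k+1)(k'+1) - 1$; the point of the proposition is the sharper bound $k + k'$, so the real work is regrouping these sets into $k + k' + 1$ families.

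The regrouping step is where normality enters. First I would choose a partition of unity $\{\phi_i\}_{i=0}^{k}$ on $B$ subordinate to $\{U_i\}$ and $\{\psi_j\}_{j=0}^{k'}$ on $B'$ subordinate to $\{U'_j\}$; normality of $B$ and $B'$ guarantees these exist for the given finite open covers. Pulling back via the two projections $\text{pr}_1 : B \times B' \to B$ and $\text{pr}_2 : B \times B' \to B'$, set $f_i = \phi_i \circ \text{pr}_1$ and $g_j = \psi_j \circ \text{pr}_2$ on $B \times B'$. Then define, for each $\ell \in \{0, 1, \dots, k + k'\}$,
\[
W_\ell \;=\; \bigcup_{i + j = \ell} \Bigl\{\, (b, b') \in B \times B' \;:\; f_i(b,b')\, g_j(b,b') > f_{i'}(b,b')\, g_{j'}(b,b') \text{ for all } (i',j') \neq (i,j) \text{ with } i' + j' \neq \ell \,\Bigr\}.
\]
The standard verification shows each $W_\ell$ is open, the $W_\ell$ cover $B \times B'$ (at any point, pick a pair maximizing $f_i g_j$; its index sum determines which $W_\ell$ contains the point), and within a fixed $W_\ell$ the constituent sets indexed by distinct pairs $(i,j)$ with $i+j = \ell$ are disjoint. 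Consequently $W_\ell$ is a disjoint union of open sets, on the $(i,j)$-piece of which $s_i \times s'_j$ restricts to a partial section of $p \times p'$; assembling these gives a partial section on all of $W_\ell$.

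The main obstacle — really the only subtle point — is the openness and disjointness bookkeeping for the sets $W_\ell$: one must be careful that the strict inequality defining the $(i,j)$-piece is taken against all pairs whose index sum differs from $\ell$ (not all other pairs), so that two pieces in the same $W_\ell$ can overlap only where $f_i g_j = f_{i'} g_{j'}$, which forces $(i,j) = (i',j')$ after a short argument using that the $f$'s depend only on $b$ and the $g$'s only on $b'$. I would cite Theorem 11 of \cite{Farber_2} for the template of this argument and remark that the only hypothesis actually used is normality, so the weakening relative to \cite{Farber_2, Schwarz} is harmless. Concluding, every $W_\ell$ admits a partial section of $p \times p'$, there are $k + k' + 1$ of them, hence $\secat{p \times p'} \leq k + k' = \secat{p} + \secat{p'}$.
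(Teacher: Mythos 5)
Your overall plan (coordinate-wise sections, partitions of unity, regrouping by index sum) is exactly the argument the paper appeals to when it cites Theorem 11 of \cite{Farber_2}, but your construction of the sets $W_\ell$ has a genuine gap: they need not cover $B \times B'$. Your covering step --- ``at any point, pick a pair maximizing $f_i g_j$; its index sum determines which $W_\ell$ contains the point'' --- fails when the maximum is attained simultaneously by pairs with \emph{different} index sums, for then no pair satisfies the strict inequalities in your definition. Concretely, take $k=k'=1$ and a point $(b,b')$ with $\phi_0(b)=\phi_1(b)=\tfrac12$ and $\psi_0(b')=\psi_1(b')=\tfrac12$ (such points occur for any partition of unity subordinate to a two-set open cover of a connected space, e.g.\ $B=B'=S^1$): all four products $f_ig_j$ equal $\tfrac14$ while the index sums are $0,1,1,2$, so for every pair $(i,j)$ there is a pair of different sum with the same product value, the strict inequality fails, and $(b,b')$ lies in no $W_\ell$. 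Your disjointness justification is also not right as stated --- your definition makes no comparison between same-sum pairs, so overlap is not confined to points where $f_ig_j=f_{i'}g_{j'}$ --- although disjointness itself can be rescued: if a point lay in the $(i,j)$- and $(i',j')$-pieces with $i+j=i'+j'$, $(i,j)\neq(i',j')$, the cross pairs $(i,j')$ and $(i',j)$ have sums $\neq \ell$, giving $f_ig_j>f_ig_{j'},\,f_{i'}g_j$ and $f_{i'}g_{j'}>f_ig_{j'},\,f_{i'}g_j$, hence $f_ig_j\,f_{i'}g_{j'}>f_ig_{j'}\,f_{i'}g_j$, contradicting the identity $f_ig_j\,f_{i'}g_{j'}=f_ig_{j'}\,f_{i'}g_j$. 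The covering failure is the fatal point.

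The repair is the combinatorial scheme actually used in \cite{Farber_2}: index the pieces by pairs of nonempty subsets $S\subset\{0,\dots,k\}$, $T\subset\{0,\dots,k'\}$ rather than by single pairs. Let $W(S,T)$ be the set of points at which $f_ig_j>0$ and $f_ig_j>f_ag_b$ for every $(i,j)\in S\times T$ and every $(a,b)\notin S\times T$. Each $W(S,T)$ is open and is contained in $U_i\times U'_j$ for any chosen $(i,j)\in S\times T$ (since $\phi_i>0$ and $\psi_j>0$ there), so $s_i\times s'_j$ is a partial section of $p\times p'$ on it. If $|S|+|T|=|S'|+|T'|$ and $(S,T)\neq(S',T')$, then neither of $S\times T$, $S'\times T'$ contains the other, and choosing $(i,j)\in S\times T\setminus S'\times T'$, $(i',j')\in S'\times T'\setminus S\times T$ yields the contradictory inequalities $f_ig_j>f_{i'}g_{j'}$ and $f_{i'}g_{j'}>f_ig_j$ at any common point, so same-size sets are disjoint. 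Finally, every point lies in $W(S,T)$ where $S$ and $T$ are the sets of indices at which $\phi_i(b)$, resp.\ $\psi_j(b')$, attain their (positive) maxima. Grouping $W_\ell=\bigcup_{|S|+|T|=\ell}W(S,T)$ for $\ell=2,\dots,k+k'+2$ gives $k+k'+1$ open sets, each a disjoint union of open pieces carrying partial sections of $p\times p'$, whence $\secat{p\times p'}\le k+k'$; normality is used only to produce the two partitions of unity, as you intended. Alternatively, you can avoid partitions of unity altogether and deduce the product formula from the fiberwise-join criterion of Theorem \ref{Ganea-Schwarz Approach}, which is Schwarz's original route in \cite{Schwarz}.
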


\subsection{The Fiberwise Join}

For $1 \leq i \leq n$, let $p_i:E_i \to B$ be a fibration with fiber $F_i$. The fiberwise join of the total spaces $E_i$ over the base $B$ is defined to be the subspace of the usual join $E_1 \ast ... \ast E_n$ given by set
\[ E_1 \ast_B ... \ast_B E_n
= \left\{ t_1e_1 + ... + t_ne_n \in E_1 \ast ... \ast E_n \mid t_i,t_j \neq 0 \implies p_i(e_i) = p_j(e_j) \right\}.\]
Then the fiberwise join of the fibrations $p_i$ is the map
\[ p_1 \ast_B ... \ast_B p_n: E_1 \ast_B ... \ast_B E_n \to B\]
given by
\[ t_1e_1 + ... + t_ne_n \mapsto p_i(e_i)\]
for any $i$ such that $t_i \neq 0$. This map is a well defined fibration by the definition of the space $E_1 \ast_B ... \ast_B E_n$.
This operation is called the fiberwise join as the fiber of $p_1 \ast_B ... \ast_B p_n$ is the join of the fibers $F_1 \ast ... \ast F_n$.

If $p_i=p:E \to B$ for all $i$, then we denote the fibration $p_1 \ast_B ... \ast_B p_n$ by $\ast_B^n p$ and we denote its total space $E_1 \ast_B ... \ast_B E_n$ by $\ast_B^n E$.

The following theorem encapsulates the Ganea-Schwarz approach of using fiberwise joins to calculate sectional category:

\begin{Theorem} [\cite{Schwarz}] \label{Ganea-Schwarz Approach}
Let $p:E \to B$ be a fibration with $B$ normal. Then $\secat{p} \leq n$ if and only if $\ast_B^{n+1} p$ admits a section.
\end{Theorem}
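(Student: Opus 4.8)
The plan is to establish the two implications separately, with the fiberwise join construction serving as the bridge between a section of $\ast_B^{n+1} p$ and an open cover of $B$ by $n+1$ sets admitting partial sections. For the forward direction, suppose $\secat{p} \leq n$, so there is an open cover $U_0, \dots, U_n$ of $B$ together with partial sections $s_i : U_i \to E$ of $p$. Since $B$ is normal and the cover is finite, I would choose a partition of unity $\{\lambda_i\}_{i=0}^n$ subordinate to this cover. The idea is then to define a global section $\sigma : B \to \ast_B^{n+1} E$ by the formula $\sigma(b) = \sum_{i=0}^n \lambda_i(b)\, s_i(b)$, where the term $\lambda_i(b) s_i(b)$ is simply discarded when $\lambda_i(b) = 0$ (which is exactly where $s_i$ need not be defined, since $\operatorname{supp}\lambda_i \subseteq U_i$). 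One checks this lands in the fiberwise join: whenever two coordinates $\lambda_i(b), \lambda_j(b)$ are both nonzero, we have $p s_i(b) = b = p s_j(b)$, so the compatibility condition defining $\ast_B^{n+1} E$ holds. Continuity of $\sigma$ follows from continuity of the $\lambda_i$ and $s_i$ on the relevant sets, and $(\ast_B^{n+1} p)\sigma = Id_B$ by construction.

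For the reverse direction, suppose $\ast_B^{n+1} p$ admits a section $\sigma : B \to \ast_B^{n+1} E$. Composing with the canonical map $\ast_B^{n+1} E \to \Delta^n$ sending $\sum t_i e_i$ to its barycentric coordinates $(t_0, \dots, t_n)$, we get a map $\tau : B \to \Delta^n$. Let $V_i = \tau^{-1}(\{(t_0,\dots,t_n) : t_i > 0\})$, which is open and the $V_i$ cover $B$ since the barycentric coordinates always sum to $1$. On $V_i$, the $i$-th coordinate of $\sigma(b)$ is a genuine point $e_i(b) \in E$, and $b \mapsto e_i(b)$ defines a partial section of $p$ over $V_i$ (this uses that on $V_i$ the $i$-th coordinate is well-defined and continuous, and the fiberwise join condition forces $p(e_i(b)) = b$). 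Hence $\secat{p} \leq n$.

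Strictly speaking, the reverse direction as stated above produces only a section up to the subtlety that a point of the fiberwise join with $t_i = 0$ genuinely has no $i$-th coordinate; this is handled precisely by restricting to $V_i$ where $t_i > 0$, so the coordinate projection $\ast_B^{n+1} E \supseteq (\ast_B^{n+1} p)^{-1}(V_i) \cap \{t_i > 0\} \to E$ is defined and continuous. Since normality is not needed here, only for the partition of unity in the forward direction, I would flag explicitly that the reverse implication holds for arbitrary $B$.

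The main obstacle I anticipate is purely the point-set bookkeeping in the forward direction: verifying that $\sigma$ is continuous across the boundaries where individual $\lambda_i$ vanish, i.e. that the formula $\sum \lambda_i(b) s_i(b)$ glues to a continuous map into the quotient topology on the join even though the individual summands $s_i$ are only defined on $U_i$. The clean way to organize this is to note that on the open set where $\lambda_i > 0$ the assignment $b \mapsto \lambda_i(b) s_i(b)$ is continuous into $E_i$ with the "cone" coordinate, that these open sets together with the closed complements where $\lambda_i \equiv 0$ cover $B$, and to invoke the pasting lemma together with the universal property of the join's quotient topology. Since this is the standard Schwarz argument, I would present it compactly and cite \cite{Schwarz} for the details rather than reproduce every continuity check.
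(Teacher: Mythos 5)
Your proposal is correct and follows exactly the classical Schwarz argument that the paper itself invokes by citation rather than proving (Theorem \ref{Ganea-Schwarz Approach} is quoted from \cite{Schwarz} with no proof given): a partition of unity subordinate to the finite open cover, available by normality, assembles the partial sections $s_i$ into a global section of $\ast_B^{n+1}p$, and conversely the preimages of the sets $\left\{ t_i > 0 \right\}$ under a section give an open cover on which the $i$-th join coordinate furnishes partial sections, with normality indeed unused in that direction. The only delicate point---the continuity checks against the topology of the fiberwise join, both for the glued section and for the coordinate functions $t_i$ and coordinate projections on $\left\{ t_i > 0 \right\}$---is precisely the one you flagged and reasonably deferred to \cite{Schwarz}.
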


\subsection{LS-Category}

\begin{Definition}
The (reduced) Lusternik-Schnirelmann category of a space $X$, denoted $\cat{X}$, is defined to be the smallest integer $n \geq -1$ such that $X$ admits an open cover by $n+1$ sets $U_0,...,U_n$ that are each contractible in $X$.
\end{Definition}

If $X$ is nonempty and path-connected, then the sectional category of the based path space fibration $\pi:P_0(X) \to X$ is exactly $\cat{X}$ so that LS-category can be thought of as a special case of sectional category. Moreover, we call $G_n(X) = \ast^{n+1}_X P_0(X)$ the $n$-th Ganea space of $X$ and $p^X_n = \ast^{n+1}_X \pi$ the $n$-th Ganea fibration of $X$.

\begin{Properties}
Let $X$ and $Y$ be normal and path-connected spaces.
\begin{enumerate}
    \item[$\bullet$] If $X$ is $(r-1)$-connected, then $\cat{X} \leq \frac{\dim X}{r}$.
    \item[$\bullet$] $\cat{X \times Y} \leq \cat{X} + \cat{Y}$.
    \item[$\bullet$] If $X$ is an ANR, then $\cuplength{\widetilde{H}^\ast(X)} \leq \cat{X}$.
    \item[$\bullet$] $\cat{X} \leq n$ if and only if $p^X_n$ admits a section.
\end{enumerate}
\end{Properties}

\begin{Definition}
The (reduced) Lusternik-Schnirelmann category of a map $f:X \to Y$, denoted $\cat{f}$, is defined to be the smallest integer $n \geq -1$ such that $X$ admits an open cover by $n+1$ sets $U_0,...,U_n$ such that each restriction $f|_{U_i}$ is nullhomotopic.
\end{Definition}

\begin{Properties}
Let $f:X \to Y$ and $g:Z \to W$ be maps between normal and path-connected spaces.
\begin{enumerate}
    \item[$\bullet$] $\cat{f} \leq \cat{X}, \cat{Y}$.
    \item[$\bullet$] $\cat{f \times g} \leq \cat{f} + \cat{g}$.
    \item[$\bullet$] $\cuplength{\ker f^\ast} \leq \cat{f}$.
    \item[$\bullet$] $\cat{f} = \secat{f^\ast p_0^Y}$.
    \item[$\bullet$] $\cat{f} \leq n$ if and only if there is a lift of $f$ with respect to $p^Y_n$.
\end{enumerate}
\end{Properties}

\subsection{Topological Complexity}

\begin{Definition}
Let $X$ be a topological space.
\begin{enumerate}
    \item A motion planner on a subset $Z \subset X \times X$ is a map $s:Z \to X^I$ such that $s(x_0,x_1)(0)=x_0$ and $s(x_0,x_1)(1)=x_1$.
    \item A motion planning algorithm is a cover of $X \times X$ by sets $Z_0,...,Z_k$ such that on each $Z_i$ there is some motion planner $s_i:Z_i \to X^I$.
    \item The topological complexity of a space $X$, denoted $\TC{X}$, is the least $k$ such that $X \times X$ can be covered by $k+1$ open subsets $U_0,...,U_k$ on which there are motion planners. If no such $k$ exists, we define $\TC{X} = \infty$.
    \item Given a subset $A \subset X \times X$, the relative topological complexity of $A$ in $X$, denoted $\relTC{X}{A}$, is the least $k$ such that $A$ can be covered by $k+1$ sets $U_0,...U_k$ open in $A$ on which there are motion planners $U_i \to X^I$ for $X$. As before, if no such $k$ exists, we define $\relTC{X}{A} = \infty$.
\end{enumerate}
\end{Definition}

If $X$ is nonempty and path-connected, then the sectional category of the path space fibration $\Delta^X_0:P(X) \to X \times X$ is exactly $\TC{X}$ so that topological complexity can also be thought of as a special case of sectional category. We use the notation $\Delta^X_0$ for the path space fibration since it is a fibration replacement of the diagonal map $\Delta:X \to X \times X$. Moreover, we denote the corresponding fiberwise joins as $\Delta_n(X) = \ast^{n+1}_{X \times X} P(X)$ and $\Delta^X_n = \ast^{n+1}_{X \times X} \Delta^X$.

\begin{Properties}
Let $X$ and $Y$ be normal and path-connected spaces.
\begin{enumerate}
    \item[$\bullet$] $\cat{X} \leq \TC{X} \leq \cat{X \times X}$.
    \item[$\bullet$] $\TC{X \times Y} \leq \TC{X} + \TC{Y}$.
    \item[$\bullet$] If $X$ is an ANR, then $\cuplength{\ker \Delta^\ast} \leq \TC{X}$.
    \item[$\bullet$] $\TC{X} \leq n$ if and only if $\Delta_n^X$ admits a section.
    \item[$\bullet$] $\relTC{X}{X \times X} = \TC{X}$.
    \item[$\bullet$] If $A \subset B \subset X \times X$, then $\relTC{X}{A} \leq \relTC{X}{B}$.
\end{enumerate}
\end{Properties}

\section{The Pullback TC of a Map}

\subsection{Definitions}

\begin{Definition}
Let $f:X \to Y$ be a map.
\begin{enumerate}
    \item An {\it $f$-motion planner} on a subset $Z \subset X \times X$ is a map $f_{\subsize{Z}}:Z \to Y^I$ such that $f_{\subsize{Z}}(x_0,x_1)(0)=f(x_0)$ and $f_{\subsize{Z}}(x_0,x_1)(1)=f(x_1)$.
    \item An {\it $f$-motion planning algorithm} is a cover of $X \times X$ by sets $Z_0,...,Z_k$ such that on each $Z_i$ there is some $f$-motion planner $f_i:Z_i \to Y^I$.
    \item The {\it (pullback) topological complexity} of $f$, denoted $\pullTC{f}$, is the least $k$ such that $X \times X$ can be covered by $k+1$ open subsets $U_0,...,U_k$ on which there are $f$-motion planners. If no such $k$ exists, we define $\pullTC{f} = \infty$.
\end{enumerate}
\end{Definition}

\begin{Remark}
If $f$ doesn't map $X$ into a single path component of $Y$, then we always have $\pullTC{f} = \infty$ by this definition, so to avoid this one would have to add up the TC for each path component $f$ maps into. For the sake of this work, all spaces are assumed to be path-connected.
\end{Remark}

\begin{Examples}
{\color{white} LATEX SPACING ISSUE!!!}
\begin{enumerate}
    \item $\pullTC{f} = -1$ if and only if $f$ is the empty map.
    \item If $i:A \to X$ is an inclusion map, then $\pullTC{i} = \relTC{X}{A \times A}$.
    \item $\pullTC{Id_X} = \TC{X}$ for any space $X$.
\end{enumerate}
\end{Examples}

The following theorem shows that the notion of an $f$-motion planner can be substituted for sections of a fibration or a deformation into the diagonal as is the case with the topological complexity of a space. Additionally, the last item in the following equivalence shows an equivalence with the notion of TC studied in \cite{Murillo_Wu}.

\begin{Theorem} \label{pullback definitions}
Let $f:X \to Y$ be a map and let $Z \subset X \times X$. The following are equivalent:
\begin{enumerate}
    \item there is a partial section $s:Z \to (f \times f)^\ast Y^I$ of the pullback fibration $(f \times f)^\ast \Delta_0^Y$;
    \item there is an $f$-motion planner $f_{\subsize{Z}}:Z \to Y^I$;
    \item $(f \times f)|_Z$ can be deformed into $\Delta Y$
    \item there is a map $h:Z \to X$ such that $(f \times f) \Delta h$ is homotopic to $(f \times f)|_Z$.
\end{enumerate}
\end{Theorem}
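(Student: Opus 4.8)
The plan is to prove the four conditions equivalent through the cycle $(1)\Leftrightarrow(2)$ and $(2)\Rightarrow(3)\Rightarrow(4)\Rightarrow(2)$; every step is ``soft'', and the only one with a hint of subtlety is $(3)\Rightarrow(4)$.

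For $(1)\Leftrightarrow(2)$ I would unwind the pullback. By definition $(f\times f)^\ast Y^I$ is the subspace of $(X\times X)\times Y^I$ consisting of pairs $\big((x_0,x_1),\phi\big)$ with $\phi(0)=f(x_0)$ and $\phi(1)=f(x_1)$, and $(f\times f)^\ast\Delta_0^Y$ is the projection onto $X\times X$. A partial section over $Z$ is then literally the same datum as a continuous rule $(x_0,x_1)\mapsto\phi$ assigning such a path, i.e. an $f$-motion planner; the correspondence is $s(z)=\big(z,f_Z(z)\big)$ in one direction and ``second coordinate of $s$'' in the other.

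The two elementary implications come next. For $(2)\Rightarrow(3)$, given an $f$-motion planner $f_Z:Z\to Y^I$, set $H\big((x_0,x_1),t\big)=\big(f_Z(x_0,x_1)(t),\,f(x_1)\big)$; then $H_0=(f\times f)|_Z$ and $H_1$ has image in $\Delta Y$, so $(f\times f)|_Z$ is deformed into $\Delta Y$. For $(4)\Rightarrow(2)$, take $h:Z\to X$ and a homotopy $L=(L^1,L^2):Z\times I\to Y\times Y$ with $L_0=(f\times f)\Delta h$ and $L_1=(f\times f)|_Z$; then $L^1(z,0)=f(h(z))=L^2(z,0)$ while $L^1(z,1)=f(x_0)$ and $L^2(z,1)=f(x_1)$, so the concatenation of the reverse of $t\mapsto L^1(z,t)$ with $t\mapsto L^2(z,t)$ is, by the pasting lemma, a continuous $f$-motion planner on $Z$.

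The step I expect to be the main obstacle is $(3)\Rightarrow(4)$, because read literally it asks us to promote a homotopy whose terminal map lands in $\Delta Y\cong Y$ into one that is $f$ applied to a map $Z\to X$ --- i.e. to lift through $f$, which is impossible in general. The point is that we get to choose $h$, and the first projection does the job: put $h=\mathrm{pr}_1|_Z:(x_0,x_1)\mapsto x_0$, so $(f\times f)\Delta h$ is the map $(x_0,x_1)\mapsto\big(f(x_0),f(x_0)\big)$. Given a deformation $H=(H^1,H^2)$ with $H_0=(f\times f)|_Z$ and $H^1(z,1)=H^2(z,1)$, let $\gamma_z$ be the concatenation of $t\mapsto H^1(z,t)$ with the reverse of $t\mapsto H^2(z,t)$; this is a path in $Y$ from $f(x_0)$ to $f(x_1)$ depending continuously on $z$. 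Then $\big((x_0,x_1),t\big)\mapsto\big(f(x_0),\,\gamma_{(x_0,x_1)}(t)\big)$ is a homotopy from $(f\times f)\Delta h$ to $(f\times f)|_Z$, which is exactly $(4)$. As in the other steps, the only thing needing care is continuity of the reparametrised concatenations, which is immediate from the pasting lemma since the two halves agree on the common endpoint.
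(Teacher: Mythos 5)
Your proposal is correct and follows essentially the same approach as the paper: the pullback is unwound so that sections over $Z$ correspond exactly to $f$-motion planners, the homotopy $(z,t)\mapsto\bigl(f_Z(z)(t),f(x_1)\bigr)$ handles $(2)\Rightarrow(3)$, the map $h$ is taken to be a coordinate projection, and the remaining implications are obtained by concatenating the coordinate paths of the given homotopy. The only difference is organizational: the paper proves $(2)\Rightarrow(4)$ directly (with the second-coordinate projection as $h$) and gets $(4)\Rightarrow(3)$ for free since $(f\times f)\Delta h$ lands in $\Delta Y$, whereas you close the cycle via $(3)\Rightarrow(4)\Rightarrow(2)$ with the first-coordinate projection—an equivalent mirror-image of the same idea.
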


\begin{proof}
($\bf 1 \implies 2$) Let $s:Z \to (f \times f)^\ast Y^I$ be a partial section of the pullback fibration $(f \times f)^\ast \Delta_0^Y$ and consider the pullback diagram of the maps $f \times f$ and $\Delta_0^Y$
\[ \begin{tikzcd}[column sep = 4em, row sep = 4em]
(f \times f)^\ast Y^I \arrow[r, "\overline{f \times f}"] \arrow[d, swap, "(f \times f)^\ast \Delta_0^Y"]
& Y^I \arrow[d, "\Delta_0^Y"]
\\
X \times X \arrow[r, swap, "f \times f"]
& Y \times Y
\end{tikzcd}\]
where the $(f \times f)^\ast Y^I$ is given by
\[ (f \times f)^\ast Y^I = \{ (x_0,x_1,\alpha) \in X \times X \times Y^I \mid \alpha(0)=f(x_0) \text{ and } \alpha(1)=f(x_1) \}\]
and $(f \times f)^\ast \Delta_0^Y$ and $\overline{f \times f}$ are the obvious projection maps. Define the $f$-motion planner $f_{\subsize{Z}}:Z \to Y^I$ by $f_{\subsize{Z}} = (\overline{f \times f}) s$. Then it follows that
\[ \Delta_0^Y f_{\subsize{Z}}
= \Delta_0^Y (\overline{f \times f}) s
= (f \times f) \left( (f \times f)^\ast \Delta_0^Y \right) s
= (f \times f) Id_{Z}
= (f \times f)|_{Z}\]
so that applying both sides to some $(x_0,x_1) \in Z$ gives
\[ \left( f_{\subsize{Z}}(x_0,x_1)(0), f_{\subsize{Z}}(x_0,x_1)(1) \right)
= (\Delta_0^Y f_{\subsize{Z}})(x_0,x_1)
= (f \times f)|_{Z} (x_0,x_1)
= \left( f(x_0),f(x_1) \right).\]
Therefore, $f_{\subsize{Z}}$ is an $f$-motion planner on $Z$.

($\bf 2 \implies 1$) Let $f_{\subsize{Z}}:Z \to Y^I$ be an $f$-motion planner. Since $(f \times f)^\ast 
Y^I$ is the pullback of $f \times f$ and $\Delta_0^Y$, there is some $s:Z \to (f \times f)^\ast 
Y^I$ such that the diagram
\[ \begin{tikzcd}[column sep = 3em, row sep = 3em]
Z \arrow[ddr, swap, "i"] \arrow[drr, "f_{\scaleto{Z}{3 pt}}"] \arrow[dr, "s"]
\\
& (f \times f)^\ast Y^I \arrow[r, swap, "\overline{f \times f}"] \arrow[d, "(f \times f)^\ast \Delta_0^Y"]
& Y^I \arrow[d, "\Delta_0^Y"]
\\
&X \times X \arrow[r, swap, "f \times f"]
& Y \times Y
\end{tikzcd}\]
commutes where $i:Z \to X \times X$ is the inclusion map. Therefore, $\left( (f \times f)^\ast \Delta_0^Y \right) s = i$ so that $s$ is a partial section of $(f \times f)^\ast 
\Delta_0^Y$ on $Z$.

($\bf 2 \implies 3,4$) Let $f_{\subsize{Z}}:Z \to Y^I$ be an $f$-motion planner. Now define a homotopy $H:X \times X \times I \to Y \times Y$ by
\[ H(x_0,x_1,t) = \left( f_{\subsize{Z}}(x_0,x_1)(t),f(x_1) \right),\]
which is a deformation of $(f \times f)|_Z$ into $\Delta Y$, showing $(3)$. Also observe that
\[ H(x_0,x_1,1)
= (f(x_1),f(x_1))
= (f \times f)(x_1,x_1)
= (f \times f)\Delta(x_1)
= (f \times f) \Delta (\text{proj}_1)|_Z(x_0,x_1)\]
where $\text{proj}_1:X \times X \to X$ is the projection map into the second coordinate. This proves $(4)$ by setting $h= (\text{proj}_1)|_Z$.

($\bf 3 \implies 2$) Now let $H:Z \times I \to Y \times Y$ be a deformation of $(f \times f)|_Z$ into $\Delta Y$. Define a map $f_{\subsize{Z}}:Z \to Y^I$ by
\[ f_{\subsize{Z}}(x_0,x_1)(t) =
\begin{cases}
\text{proj}_0 \left(H(x_0,x_1,2t) \right) & \text{if } 0 \leq t \leq \frac{1}{2} \\
\text{proj}_1 \left(H(x_0,x_1,2 - 2t) \right) & \text{if } \frac{1}{2} \leq t \leq 1
\end{cases}\]
where $\text{proj}_0,\text{proj}_1:Y \times Y \to Y$ are the projection maps into the corresponding coordinates. Then $f_{\subsize{Z}}$ is well-defined and continuous since $H(x_0,x_1,t) \in \Delta Y$ for all $(x_0,x_1) \in Z$. Moreover,
\[ f_{\subsize{Z}}(x_0,x_1)(0)
= \text{proj}_0 \left(H(x_0,x_1,0) \right)
= \text{proj}_0 \left(f(x_0),f(x_1)) \right)
= f(x_0)\]
and
\[ f_{\subsize{Z}}(x_0,x_1)(1)
= \text{proj}_1 \left(H(x_0,x_1,0) \right)
= \text{proj}_1 \left(f(x_0),f(x_1)) \right)
= f(x_1)\]
so that $f_{\subsize{Z}}$ is an $f$-motion planner on $Z$.

($\bf 4 \implies 3$) Note that the image of $(f \times f) \Delta h$ will always be contained in $\Delta Y$.
\end{proof}

This theorem implies that $\pullTC{f}$ is the minimum $k$ such that there is an open cover $U_0,...,U_k$ on each of which at least one of the above equivalent properties hold. Moreover, if $X$ and $Y$ are ANRs, then any cover will do as we will see in the following theorem.

\begin{Definition}
Let $p:E \to B$ be a fibration. Then the {\it generalized sectional category} of $p$, denoted $\secatg{p}$, is the smallest integer $k$ such that $B$ can be covered by $k+1$ sets $Z_0,...,Z_k$ that each admit a partial section $s_i:Z_i \to E$ of $p$. If no such integer $k$ exists, then we set $\secatg{p} = \infty$.
\end{Definition}

\begin{Theorem} \label{pullback cover ANR}
If $f:X \to Y$ is a map between ANRs, then $\pullTC{f}=\secatg{(f \times f)^\ast \Delta^Y_0}$.
\end{Theorem}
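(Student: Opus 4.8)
The plan is to reduce the equality to a comparison of two covering invariants of the single fibration $(f \times f)^\ast \Delta^Y_0$, and then to upgrade an arbitrary cover by $f$-motion planners to an open one by ``thickening'' each piece, using that $X$ and $Y$ are ANRs.

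By Theorem \ref{pullback definitions}, an open set $U \subseteq X \times X$ carries an $f$-motion planner if and only if it carries a partial section of $(f \times f)^\ast \Delta^Y_0$, so $\pullTC{f} = \secat{(f \times f)^\ast \Delta^Y_0}$; since every open cover is in particular a cover, $\secatg{(f \times f)^\ast \Delta^Y_0} \leq \pullTC{f}$ is immediate and only the reverse inequality needs proof. So suppose $\secatg{(f \times f)^\ast \Delta^Y_0} = k$, witnessed by a cover $Z_0, \dots, Z_k$ of $X \times X$ together with partial sections; by Theorem \ref{pullback definitions} (items $1$ and $3$) this amounts to deformations $H_i : Z_i \times I \to Y \times Y$ with $H_i(\, \cdot \,, 0) = (f \times f)|_{Z_i}$ and $H_i(Z_i \times \{1\}) \subseteq \Delta Y$. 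It then suffices to produce, for each $i$, an open set $U_i \supseteq Z_i$ on which $(f \times f)|_{U_i}$ can be deformed into $\Delta Y$: for then $U_0, \dots, U_k$ is an open cover of $X \times X$, Theorem \ref{pullback definitions} converts these deformations back into $f$-motion planners, and $\pullTC{f} \leq k$ follows.

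Fix $i$. Since $Y$ is an ANR, so is $Y \times Y$; embed $Y \times Y$ as a closed subset of a convex set $C$ in a normed linear space and fix a retraction $\rho : M \to Y \times Y$ of an open neighbourhood $M \supseteq Y \times Y$ in $C$, and fix an open neighbourhood $W$ of the closed ANR subset $\Delta Y \subseteq Y \times Y$ with a retraction $W \to \Delta Y$, shrinking $W$ so that $W \hookrightarrow Y \times Y$ is homotopic to a map into $\Delta Y$. Using a compatible metric on $X \times X$ (which is metrizable, $X$ being an ANR), cover $Z_i$ by balls $B(z, \varepsilon_z)$, $z \in Z_i$, with each radius $\varepsilon_z$ chosen small enough that $f \times f$ varies little on $B(z, 2\varepsilon_z)$ and the adjoint $\widehat{H_i} : Z_i \to (Y \times Y)^I$ varies little on $B(z, 2\varepsilon_z) \cap Z_i$, where ``little'' is measured against the distances, positive by compactness of $I$, from the arc $\{H_i(z,t) : t \in I\}$ to the complement of $M$ and against the continuity moduli of $\rho$ and of $W \to \Delta Y$ along that arc. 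Let $U_i$ be the union of these balls, take a partition of unity $\{\lambda_z\}$ on $U_i$ subordinate to $\{B(z, \varepsilon_z)\}$, and set $\tilde{H}_i(b,t) = \rho \big( \sum_z \lambda_z(b) H_i(z,t) \big)$; the smallness conditions ensure that every convex combination occurring here lies in $M$, so $\tilde{H}_i : U_i \times I \to Y \times Y$ is well defined and continuous.

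To conclude, $\tilde{H}_i(\, \cdot \,, 0)$ is pointwise close to $(f \times f)|_{U_i}$, so the straight-line homotopy in $C$ between them, pushed back by $\rho$, connects $(f \times f)|_{U_i}$ to $\tilde{H}_i(\, \cdot \,, 0)$ inside $Y \times Y$; and $\tilde{H}_i(\, \cdot \,, 1)$ has image inside $W$, hence is homotopic in $Y \times Y$ to a map into $\Delta Y$. Concatenating these two homotopies with $\tilde{H}_i$ exhibits $(f \times f)|_{U_i}$ as deformable into $\Delta Y$, which completes the argument. I expect the main obstacle to be exactly that the pieces $Z_i$ of an arbitrary cover need not be closed --- and, by the Baire category theorem, typically cannot all be taken closed --- so the usual ANR extension theorems do not apply directly to extend $H_i$ beyond $Z_i$; the convex-averaging device above is what replaces extension, and the real work is the bookkeeping needed to keep every convex combination inside the retraction neighbourhoods $M$ and $W$, for which one leans on the compactness of $I$ and the continuity of the adjoint maps $\widehat{H_i}$.
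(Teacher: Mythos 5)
Your proposal is correct, but it takes a genuinely different route from the paper. The paper's proof is a short citation argument: since $X \times X$, $Y \times Y$ and $Y^I$ are ANRs and $\Delta_0^Y$ is a fibration, the pullback total space $(f \times f)^\ast Y^I$ is an ANR by a theorem of Miyata, so $(f \times f)^\ast \Delta_0^Y$ is a fibration between ANRs, and for such fibrations $\secat{-}=\secatg{-}$ by results of Garc\'ia-Calcines and Srinivasan; the statement then follows from Theorem \ref{pullback definitions}. You instead prove the needed upgrade from arbitrary covers to open covers directly: using the deformation characterization (items 1 and 3 of Theorem \ref{pullback definitions}), you thicken each piece $Z_i$ to an open $U_i$ by embedding $Y \times Y$ as a closed subset of a convex set in a normed space, averaging the given homotopies through a partition of unity subordinate to the $\varepsilon_z$-balls, and retracting back, then repairing the two ends of the homotopy (a short straight-line correction at $t=0$, and the neighborhood $W$ of the closed ANR $\Delta Y$ at $t=1$). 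What this buys: you never need the total space of the pullback to be an ANR (Miyata's theorem disappears), and in fact you only use metrizability of $X \times X$ together with the ANR property of $Y \times Y$ and $\Delta Y$; what the paper's route buys is brevity, at the cost of leaning on external results whose proofs are of the same controlled-averaging nature as your construction. Two points you leave implicit should be spelled out, though both are routine: (i) the control of the convex combination at a point $b \in U_i$ comes from comparing all finitely many active centers with the one of largest radius, which is exactly what the $2\varepsilon_z$ condition is designed for; (ii) to guarantee $\tilde{H}_i(\cdot,1)$ lands in $W$ you must calibrate $\varepsilon_z$ not only against the continuity moduli of $\rho$ and of the retraction of $W$, but also against the distance from $H_i(z,1)$ to the complement of $W$.
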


\begin{proof}
Since $X$ and $Y$ are ANRs, it follows that $X \times X$, $Y \times Y$, and $Y^I$ are all ANRs. Since $\Delta^Y_0$ is a fibration, it follows that $(f \times f)^\ast Y^I$ is an ANR (see Theorem 2.2 of \cite{Miyata}). Thus, $(f \times f)^\ast \Delta^Y_0$ is a fibration between ANRs so that $\secat{(f \times f)^\ast \Delta^Y_0} = \secatg{(f \times f)^\ast \Delta^Y_0}$ (see \cite{Garcia-Calcines} and \cite{Srinivasan}). Thus, $\pullTC{f}=\secatg{(f\times f)^\ast \Delta^Y_0}$ by Theorem \ref{pullback definitions}.
\end{proof}

\begin{Corollary} \label{pullback TC lifting}
Let $f:X \to Y$ be any map with $X \times X$ normal. Then $\pullTC{f} \leq k$ if and only if there is a lift of $f \times f$ with respect to $\Delta^Y_{k+1}$.
\end{Corollary}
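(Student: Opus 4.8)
The plan is to obtain the corollary by composing three facts already at hand: the dictionary between $f$-motion planners and sections of a pullback fibration (Theorem~\ref{pullback definitions}), the Ganea--Schwarz description of sectional category via fibrewise joins (Theorem~\ref{Ganea-Schwarz Approach}), and the naturality of the fibrewise join under pullback. Write $q=(f\times f)^{\ast}\Delta^{Y}_{0}$ for the pullback of the path fibration of $Y$ along $f\times f$; it is a fibration over $X\times X$. By the equivalence $(1)\Leftrightarrow(2)$ of Theorem~\ref{pullback definitions}, an $f$-motion planner on a subset $Z\subseteq X\times X$ is precisely a partial section of $q$ over $Z$, so $\pullTC{f}=\secat{q}$; in particular $\pullTC{f}\le k$ if and only if $\secat{q}\le k$. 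Since $X\times X$ is normal, Theorem~\ref{Ganea-Schwarz Approach} converts this into the statement that the fibrewise join $\ast^{k+1}_{X\times X}q$ admits a global section.

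The step that is not merely a quotation of an earlier result is the identification of this fibrewise join with the pullback by $f\times f$ of the Ganea fibration of $Y$ appearing in the statement. For this I would invoke (or, if a reference is wanted, prove directly) the compatibility of fibrewise joins with pullbacks: for a fibration $p\colon E\to B$ and a map $g\colon B'\to B$, the evident comparison map $\ast^{m}_{B'}(g^{\ast}p)\to g^{\ast}(\ast^{m}_{B}p)$ is a homeomorphism over $B'$ compatible with the projections. Its verification is a routine unwinding of the definitions of the fibrewise join and of the pullback: a point of either side is a formal combination $t_{1}e_{1}+\dots+t_{m}e_{m}$ together with a point $b'\in B'$ such that $p(e_{i})=g(b')$ for every $i$ with $t_{i}\neq 0$. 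Applying this with $p=\Delta^{Y}_{0}$ and $g=f\times f$ identifies $\ast^{k+1}_{X\times X}q$ with the pullback along $f\times f$ of the corresponding iterated fibrewise join of $\Delta^{Y}_{0}$, which in the notation of Section~2 is the Ganea fibration $\Delta^{Y}_{k+1}\colon\Delta_{k+1}(Y)\to Y\times Y$; here one only has to keep the number of join factors consistent with the Section~2 convention $\Delta^{Y}_{n}=\ast^{n+1}_{Y\times Y}\Delta^{Y}$ and with the Ganea--Schwarz indexing (a $(k+1)$-fold fibrewise join corresponding to sectional category $\le k$).

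Finally, by the universal property of the pullback a section of $(f\times f)^{\ast}\Delta^{Y}_{k+1}$ is the same thing as a lift of $f\times f$ along $\Delta^{Y}_{k+1}$. Stringing the equivalences together --- $\pullTC{f}\le k$ iff $\secat{q}\le k$ iff $\ast^{k+1}_{X\times X}q$ has a section iff $(f\times f)^{\ast}\Delta^{Y}_{k+1}$ has a section iff $f\times f$ lifts along $\Delta^{Y}_{k+1}$ --- gives the corollary. I do not anticipate a serious obstacle: once Theorem~\ref{pullback definitions} is available, the corollary is the exact $\pullTC{-}$ analogue of the Ganea-type lifting criteria already recorded for $\TC{X}$ and $\cat{f}$; the only substantive sub-fact is the fibrewise-join/pullback compatibility, which is elementary, and the sole use of the normality hypothesis on $X\times X$ is to license the appeal to Theorem~\ref{Ganea-Schwarz Approach}.
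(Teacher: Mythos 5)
Your argument is correct and is essentially the paper's own proof: both reduce $\pullTC{f}$ to $\secat{(f\times f)^\ast \Delta_0^Y}$, use the commutation of the iterated fibrewise join with pullback along $f\times f$, and invoke the Ganea--Schwarz criterion (this is where normality of $X\times X$ enters) together with the universal property of the pullback to translate sections into lifts of $f\times f$. The only caveat is indexing: under the Section~2 convention $\Delta^Y_n=\ast^{n+1}_{Y\times Y}\Delta^Y_0$, the $(k+1)$-fold fibrewise join you construct is $\Delta^Y_k$ rather than $\Delta^Y_{k+1}$, an off-by-one that the corollary's statement itself exhibits relative to the paper's proof, so your labelling simply mirrors that discrepancy.
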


\begin{proof}
Consider the pullback diagram
\[ \begin{tikzcd}[column sep = 4em, row sep = 4em]
(f \times f)^\ast Y^I \arrow[r, "\overline{f \times f}"] \arrow[d, swap, "(f \times f)^\ast \Delta_0^Y"]
& Y^I \arrow[d, "\Delta_0^Y"]
\\
X \times X \arrow[r, swap, "f \times f"]
& Y \times Y
\end{tikzcd}\]
as usual. Note that taking the self fiberwise join of a fibration commutes with taking pullbacks, so we get the pullback diagram
\[ \begin{tikzcd}[column sep = 4em, row sep = 4em]
(f \times f)^\ast \Delta_k(Y)\arrow[r] \arrow[d, swap, "(f \times f)^\ast \Delta_k^Y"]
& \Delta_k(Y) \arrow[d, "\Delta_k^Y"]
\\
X \times X \arrow[r, swap, "f \times f"]
& Y \times Y
\end{tikzcd}\]
where $\ast_{\scaleto{X \times X}{5pt}}^{k+1}(f \times f)^\ast Y^I = (f \times f)^\ast \Delta_k(Y)$ and $\ast_{\scaleto{X \times X}{5pt}}^{k+1} (f \times f)^\ast \Delta_0^Y = (f \times f)^\ast \Delta_k^Y$. Thus, by Theorem \ref{Ganea-Schwarz Approach}, this corollary follows immediately.
\end{proof}

\subsection{Basic Inequalities}

\begin{Proposition} \label{pullback category bounds}
Let $f:X \to Y$ be a map. Then
\begin{enumerate}
    \item $\pullTC{f} \leq \min \{ \TC{X}, \TC{Y} \}$;
    \item $\cat{f} \leq \pullTC{f} \leq \cat{f \times f}$.
\end{enumerate}
\end{Proposition}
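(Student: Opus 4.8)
The plan is to deduce everything from the characterizations in Theorem \ref{pullback definitions} together with the pullback and domination inequalities for sectional category (Propositions \ref{sectional pullback} and \ref{genus invariance}). For part (1), the inequality $\pullTC{f} \leq \TC{Y}$ is the cleanest: the fibration $(f \times f)^\ast \Delta_0^Y$ is by construction the pullback of the path space fibration $\Delta_0^Y : P(Y) \to Y \times Y$ along $f \times f$, so Proposition \ref{sectional pullback} gives $\secat{(f \times f)^\ast \Delta_0^Y} \leq \secat{\Delta_0^Y} = \TC{Y}$, and the left-hand side is $\pullTC{f}$ by Theorem \ref{pullback definitions} (part 2 $\Leftrightarrow$ part 1). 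For $\pullTC{f} \leq \TC{X}$, I would argue directly with $f$-motion planners: given a motion planning algorithm $U_0,\dots,U_k$ for $X$ with motion planners $s_i : U_i \to X^I$, the composites $f^I \circ s_i : U_i \to Y^I$ (post-composition with paths by $f$) are $f$-motion planners, since $(f^I s_i)(x_0,x_1)(0) = f(s_i(x_0,x_1)(0)) = f(x_0)$ and likewise at $t=1$. Hence the same open cover works, giving $\pullTC{f} \leq \TC{X}$. Combining the two yields the minimum.

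For part (2), the lower bound $\cat{f} \leq \pullTC{f}$ should follow the classical pattern $\cat{X} \leq \TC{X}$: restrict an $f$-motion planner to the "diagonal slice." Precisely, suppose $\pullTC{f} \leq k$ with open cover $U_0,\dots,U_k$ of $X \times X$ and $f$-motion planners $f_i : U_i \to Y^I$. Let $j : X \to X \times X$ be the map $x \mapsto (x_0, x)$ for a fixed basepoint $x_0$ (or, more symmetrically, use $x \mapsto (x, x_0)$), and set $V_i = j^{-1}(U_i)$, an open cover of $X$. On $V_i$ the path $t \mapsto f_i(x_0, x)(t)$ is a homotopy in $Y$ from the constant map $f(x_0)$ to $f|_{V_i}$, so $f|_{V_i}$ is nullhomotopic; thus $\cat{f} \leq k$. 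The upper bound $\pullTC{f} \leq \cat{f \times f}$ is where I expect the only real subtlety: I would use Theorem \ref{pullback definitions}, part 3 (or part 4). By definition of $\cat{f \times f}$, if $\cat{f \times f} \leq k$ there is an open cover $W_0,\dots,W_k$ of $X \times X$ with each $(f \times f)|_{W_i}$ nullhomotopic, i.e. homotopic to a constant map $(y_0, y_1)$. Choosing a path in $Y$ from $y_0$ to $y_1$ (using path-connectedness of $Y$), one sees $(f \times f)|_{W_i}$ is homotopic to a constant map lying in $\Delta Y$, hence $(f \times f)|_{W_i}$ can be deformed into $\Delta Y$; by Theorem \ref{pullback definitions} this is exactly the existence of an $f$-motion planner on $W_i$, so $\pullTC{f} \leq k$.

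The main obstacle is making the last step fully rigorous: "nullhomotopic" means homotopic to \emph{some} constant, and I need that constant (or a homotopic one) to land in $\Delta Y$; this is where path-connectedness of $Y$ is genuinely used, to slide the target point $(y_0, y_1)$ along a path to $(y_0, y_0) \in \Delta Y$. I should also double-check the basepoint issue in the $\cat{f} \leq \pullTC{f}$ argument — the sets $V_i$ cover $X$ because $(x_0, x) \in X \times X$ lies in some $U_i$ for every $x$ — and confirm that the homotopy $t \mapsto f_i(x_0, x)(t)$ is jointly continuous in $(x,t)$, which it is since it is the adjoint of $f_i$ precomposed with $j$. Everything else is a direct translation through Theorem \ref{pullback definitions}.
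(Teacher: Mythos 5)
Your proposal is correct and follows essentially the same route as the paper: $\pullTC{f} \leq \TC{Y}$ via Proposition \ref{sectional pullback}, $\pullTC{f} \leq \TC{X}$ by post-composing motion planners with $f$, and $\cat{f} \leq \pullTC{f}$ by restricting $f$-motion planners to a basepoint slice. The only cosmetic difference is in the bound $\pullTC{f} \leq \cat{f \times f}$, where you deform $(f \times f)|_{W_i}$ into $\Delta Y$ using a path from $y_0$ to $y_1$ and invoke the equivalence $(3)\Rightarrow(2)$ of Theorem \ref{pullback definitions}, while the paper writes out the resulting three-piece $f$-motion planner explicitly; these are the same argument packaged differently, and your handling of the path-connectedness point is exactly what the paper uses.
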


Proposition 2.2 of \cite{Murillo_Wu} proves all of these inequalities with the exception of $\pullTC{f} \leq \TC{Y}$, but that inequality is relatively easy now that we have the perspective of pullbacks.

\begin{proof}
(1) Note that $\pullTC{f} \leq \TC{Y}$ is immediate by Proposition \ref{sectional pullback} since $\pullTC{f} = \secat{(f \times f)^\ast \pi^Y}$. Now suppose $\TC{X}=k$ and let $U_0,...,U_k$ be an open cover of $X \times X$ with motion planners $s_i:U_i \to X^I$. Define $f_i:U_i \to Y^I$ by $f_i = f_\ast s_i$. Then
\[f_i(x_0,x_1)(0)
= f(s_i(x_0,x_1)(0))
= f(x_0)\]
and similarly
\[f_i(x_0,x_1)(1)
= f(s_i(x_0,x_1)(1))
= f(x_1).\]
so that the $f_i$ are $f$-motion planners and $\pullTC{f} \leq k$.

(2) Note that if $X$ is empty, then $\cat{f} = \pullTC{f} = \cat{f \times f} = -1$, so it suffices to assume $X$ is nonempty.

First we prove $\cat{f} \leq \pullTC{f}$. Suppose $\pullTC{f}=k$ and let $U_0,...,U_k$ be an open cover of $X \times X$ with $f$-motion planners $f_i:U_i \to Y^I$. Let $b$ be a choice of base point for $X$ and define $V_i = \{ x \mid (b,x) \in U_i \}$. Then $V_0,...,V_k$ is clearly an open cover for $X$. Now we define a nullhomotopy $H_i:V_i \times I \to Y$ of $f$ on $V_i$ by
\[ H_i(x,t) = f(b,x)(t). \]
Therefore, $\cat{f} \leq k$.

Finally, we show that $\pullTC{f} \leq \cat{f \times f}$. Suppose $\cat{f \times f} = k$ and let $U_0,...,U_k$ be open sets covering $X \times X$ such that each $(f \times f)|_{U_i}$ is nullhomotopic. Let $H_i:U_i \times I \to Y \times Y$ be a nullhomotopy of $(f \times f)|_{U_i}$ to some $(y_0^i,y_1^i) \in Y \times Y$. Since $Y$ is path-connected, let $\gamma_i$ be some path from $y_0^i$ to $y_1^i$. Now define $f_i:U_i \to Y^I$ by
\[f_i(x_0,x_1)(t) =
\begin{cases}
p_0 \left( H_i(x_0,x_1,3t) \right) & \text{if } 0 \leq t \leq \frac{1}{3} \\
\gamma_i(3t-1) & \text{if } \frac{1}{3} \leq t \leq \frac{2}{3} \\
p_1 \left( H_i(x_0,x_1,3 - 3t) \right) &  \text{if }\frac{2}{3} \leq t \leq 1
\end{cases}\]
where $p_0,p_1:Y \times Y \to Y$ are projections into the first and second coordinates, respectively. Then $f_i$ is continuous since $H_i(x_0,x_1)(1)=(y_0^i,y_1^i)$. Furthermore,
\[ f_i(x_0,x_1)(0)
= p_0 \left( H_i(x_0,x_1,3 \cdot 0) \right)
= p_0 \left( (f \times f)(x_0,x_1) \right)
= f(x_0)\]
and
\[ f_i(x_0,x_1)(1)
= p_1 \left( H_i(x_0,x_1,3-3 \cdot 1) \right)
= p_1 \left( (f \times f)(x_0,x_1) \right)
= f(x_1)\]
so that the $f_i$ are $f$-motion planners. Therefore, $\pullTC{f} \leq k$.
\end{proof}

The following corollary corresponds to Corollary 2.3 of \cite{Murillo_Wu}:

\begin{Corollary} \label{pullback nullhomotopic}
Let $f:X \to Y$ be any map. Then $\pullTC{f} = 0$ if and only if $f$ is nullhomotopic.
\end{Corollary}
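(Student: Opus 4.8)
The plan is to prove the equivalence directly from the characterizations established so far, using $\pullTC{f} = 0$ to mean that $X \times X$ admits a single open $f$-motion planner, i.e. there is an $f$-motion planner defined on all of $X \times X$.

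First I would handle the forward direction. If $\pullTC{f} = 0$, then by Theorem \ref{pullback definitions} (taking $Z = X \times X$) one of the equivalent conditions holds globally; the most convenient is condition (3), which says that $f \times f$ can be deformed into the diagonal $\Delta Y$. I would then restrict attention to the ``vertical slice'' $\{b\} \times X$ for a chosen basepoint $b \in X$, exactly as in the proof of $\cat{f} \leq \pullTC{f}$ in Proposition \ref{pullback category bounds}(2): composing the global $f$-motion planner with the inclusion $x \mapsto (b,x)$ produces a homotopy $H(x,t) = f_{\subsize{X \times X}}(b,x)(t)$ from the constant map at $f(b)$ to $f$, which is exactly a nullhomotopy of $f$. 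In fact this is just the special case of the already-proved inequality $\cat{f} \leq \pullTC{f}$, since $\cat{f} = 0$ is equivalent to $f$ being nullhomotopic by definition; so this direction is essentially a one-line appeal to Proposition \ref{pullback category bounds}(2).

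For the converse, suppose $f$ is nullhomotopic. Then $f \times f$ is also nullhomotopic (a product of two nullhomotopies), so $\cat{f \times f} = 0$, and the inequality $\pullTC{f} \leq \cat{f \times f}$ from Proposition \ref{pullback category bounds}(2) gives $\pullTC{f} \leq 0$. Combined with the other direction we get $\pullTC{f} = 0$ precisely when $f$ is nullhomotopic (noting $\pullTC{f} = -1$ only when $X$ is empty, in which case $f$ is the empty map and is vacuously not usually called nullhomotopic — one may either exclude this case or adopt the convention that the empty map is nullhomotopic; I would simply assume $X$ nonempty, matching the standing assumptions). Alternatively, and perhaps more in the spirit of the $f$-motion planner language, I could give the converse construction explicitly: a nullhomotopy $G:X \times I \to Y$ of $f$ to a constant $y_0$ yields the global $f$-motion planner $f_{\subsize{X \times X}}(x_0,x_1)(t) = G(x_0, 3t)$ for $t \le \tfrac13$, constant at $y_0$ for $\tfrac13 \le t \le \tfrac23$, and $G(x_1, 3-3t)$ for $t \ge \tfrac23$, which is continuous and has the right endpoints.

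There is no real obstacle here; the statement is a formal consequence of the two-sided inequality $\cat{f} \leq \pullTC{f} \leq \cat{f \times f}$ already in hand, together with the elementary observations that $\cat{f} = 0 \iff f \simeq \ast$ and that $f \simeq \ast \implies f \times f \simeq \ast$. The only point requiring a word of care is the degenerate empty-map case and the distinction between $\pullTC{f} = -1$ and $\pullTC{f} = 0$, which I would dispatch with a sentence at the start of the proof.
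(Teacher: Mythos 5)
Your proof is correct and follows essentially the same route as the paper: both directions come from the sandwich $\cat{f} \leq \pullTC{f} \leq \cat{f \times f}$ of Proposition \ref{pullback category bounds}(2), together with $\cat{f}=0 \iff f \simeq \ast$ and the fact that $f \simeq \ast$ forces $f \times f \simeq \ast$. Your extra explicit motion-planner construction and the remark on the empty-map ($\pullTC{f}=-1$) case are harmless additions; the paper's proof implicitly assumes $X$ nonempty just as you do.
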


\begin{proof}
First suppose $\pullTC{f} = 0$. Then
\[ 0 \leq \cat{f} \leq \pullTC{f} = 0\]
so that $\cat{f} = 0$ and $f$ is nullhomotopic.

Now suppose $f$ is nullhomotopic. Then $f \times f$ is nullhomotopic so that
\[ 0 \leq \pullTC{f} \leq \cat{f \times f} = 0\]
and hence $\pullTC{f}=0$.
\end{proof}

\begin{Proposition}
Let $f:X \to Z$ and $g:Y \to W$ be any maps such that $X \times X$ and $Y \times Y$ are both normal. Then $\pullTC{f \times g} \leq \pullTC{f} + \pullTC{g}$.
\end{Proposition}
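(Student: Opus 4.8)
The plan is to deduce this from the sectional-category product formula, Proposition \ref{secat product}, once I recognize the fibration computing $\pullTC{f\times g}$ as the product of those computing $\pullTC{f}$ and $\pullTC{g}$. By definition, $\pullTC{f\times g} = \secat{\big((f\times g)\times(f\times g)\big)^\ast\Delta_0^{Z\times W}}$, a fibration over $(X\times Y)\times(X\times Y)$, while $\pullTC{f} = \secat{(f\times f)^\ast\Delta_0^Z}$ lives over $X\times X$ and $\pullTC{g} = \secat{(g\times g)^\ast\Delta_0^W}$ over $Y\times Y$.

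First I would set up the two evident ``reshuffling'' homeomorphisms. A path in $Z\times W$ is the same datum as a pair of paths, one in $Z$ and one in $W$, giving $P(Z\times W)\cong P(Z)\times P(W)$; and the middle-swap homeomorphism $(A\times B)\times(A\times B)\xrightarrow{\;\cong\;}(A\times A)\times(B\times B)$ taking $((a_0,b_0),(a_1,b_1))$ to $((a_0,a_1),(b_0,b_1))$ applies both to the base ($A=X$, $B=Y$) and to the target of $f\times g$ ($A=Z$, $B=W$). Under these identifications the path-space fibration $\Delta_0^{Z\times W}$ corresponds to $\Delta_0^Z\times\Delta_0^W$, and $(f\times g)\times(f\times g)$ corresponds to $(f\times f)\times(g\times g)$.

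The heart of the argument is then that pulling a product fibration back along a product map yields the product of the pullbacks. Concretely, I would write out the total space: an element of $\big((f\times g)\times(f\times g)\big)^\ast(Z\times W)^I$ is a triple $((x_0,y_0),(x_1,y_1),\alpha)$ with $\alpha(0)=(f(x_0),g(y_0))$ and $\alpha(1)=(f(x_1),g(y_1))$, and splitting $\alpha$ into its two coordinate paths $\beta\in Z^I$, $\delta\in W^I$ exhibits this as precisely the datum of a point of $(f\times f)^\ast Z^I$ together with a point of $(g\times g)^\ast W^I$. So the reshuffles assemble into a commuting square of homeomorphisms identifying $\big((f\times g)\times(f\times g)\big)^\ast\Delta_0^{Z\times W}$ with $(f\times f)^\ast\Delta_0^Z\times(g\times g)^\ast\Delta_0^W$, and such an identification preserves sectional category (a very special case of Proposition \ref{genus invariance}). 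Since $X\times X$ and $Y\times Y$ are normal by hypothesis, Proposition \ref{secat product} then finishes the job: $\pullTC{f\times g} = \secat{(f\times f)^\ast\Delta_0^Z\times(g\times g)^\ast\Delta_0^W} \leq \secat{(f\times f)^\ast\Delta_0^Z}+\secat{(g\times g)^\ast\Delta_0^W} = \pullTC{f}+\pullTC{g}$.

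I expect the only delicate point to be the bookkeeping of the four-fold product permutations and checking their compatibility with the endpoint conditions that cut out the pullbacks; everything else is formal. A more elementary route — combining an $f$-motion planning algorithm on $X\times X$ and a $g$-motion planning algorithm on $Y\times Y$ componentwise — is possible, but converting the resulting pieces into an admissible \emph{open} cover of the right size requires the usual partition-of-unity shrinking, i.e.\ a reproof of Proposition \ref{secat product}, so the pullback formulation is the cleaner path.
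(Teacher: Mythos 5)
Your proof is correct and follows the same route as the paper, whose entire proof is ``This is immediate by Proposition \ref{secat product}''; you have simply made explicit the middle-swap identification of $\bigl((f\times g)\times(f\times g)\bigr)^\ast\Delta_0^{Z\times W}$ with $(f\times f)^\ast\Delta_0^Z\times(g\times g)^\ast\Delta_0^W$ that the paper leaves to the reader before invoking the product formula for sectional category.
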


\begin{proof}
This is immediate by Proposition \ref{secat product}.
\end{proof}

The cup length lower bound was proven in \cite{Murillo_Wu} for CW-complexes with coefficients in a constant ring. With an identical proof, we can switch CW-complexes for $X$ an ANR and use coefficients in any $\pi_1(Y) \times \pi_1(Y)$ module:

\begin{Theorem} \label{pullback cohomology bound}
Let $f:X \to Y$ be any map with $X$ an ANR. Suppose that $u_i \in \widetilde{H}^\ast(X \times X; A_i)$ are such that $u_i \in \ker \Delta^\ast \cap {\rm im} (f \times f)^\ast$ and $u_0 \cup ... \cup u_k \neq 0$, where each $A_i$ are $\pi_1(Y) \times \pi_1(Y)$ modules. Then $\pullTC{f} \geq k + 1$. In other words,
\[\cuplength{\ker \Delta^\ast \cap {\rm im} (f \times f)^\ast} \leq \pullTC{f}.\]
\end{Theorem}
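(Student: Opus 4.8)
The plan is to reduce the cup-length lower bound for $\pullTC{f}$ to the standard cup-length lower bound for sectional category, applied to the pullback fibration $(f \times f)^\ast \Delta_0^Y$. Recall the classical fact (Schwarz, or the Ganea--Schwarz approach via Theorem \ref{Ganea-Schwarz Approach}): if $p : E \to B$ is a fibration and $v_0, \dots, v_k \in \widetilde H^\ast(B; A_i)$ are cohomology classes each lying in $\ker p^\ast$ (the kernel of $p^\ast : H^\ast(B) \to H^\ast(E)$) whose product $v_0 \cup \cdots \cup v_k$ is nonzero, then $\secat{p} \geq k+1$. So the whole task is to package the hypotheses of the theorem into this form with $B = X \times X$ and $p = (f \times f)^\ast \Delta_0^Y$, then invoke $\pullTC{f} = \secat{(f \times f)^\ast \Delta_0^Y}$ (valid here since $X$ is an ANR, so by Theorem \ref{pullback cover ANR} the generalized and ordinary sectional categories agree, and the cup-length bound for $\secat$ needs the open-cover version).

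First I would identify $\ker\big((f \times f)^\ast \Delta_0^Y\big)^\ast$ with $\ker\Delta^\ast \cap \operatorname{im}(f\times f)^\ast$ at the level of the relevant classes. Here is the key diagram: the pullback square relating $(f\times f)^\ast Y^I \to X \times X$ to $Y^I \to Y \times Y$, together with the fact that $\Delta_0^Y : Y^I \to Y\times Y$ is a fibration replacement of the diagonal $\Delta : Y \to Y \times Y$ (so $(\Delta_0^Y)^\ast$ and $\Delta^\ast$ have the same kernel under the homotopy equivalence $Y \simeq Y^I$). Chasing cohomology around the square: a class $u \in \widetilde H^\ast(X\times X; A)$ pulls back to zero in $(f\times f)^\ast Y^I$ if and only if... this is where care is needed. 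Certainly if $u = (f\times f)^\ast w$ with $w \in \ker \Delta^\ast$, then $\big((f\times f)^\ast \Delta_0^Y\big)^\ast u = \overline{f\times f}^\ast (\Delta_0^Y)^\ast w = \overline{f\times f}^\ast (0) = 0$ by commutativity of the square and the identification $(\Delta_0^Y)^\ast = \Delta^\ast$ under $Y \simeq Y^I$. That is the direction I actually need: every $u \in \ker\Delta^\ast \cap \operatorname{im}(f\times f)^\ast$ lies in $\ker\big((f\times f)^\ast\Delta_0^Y\big)^\ast$.

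With that containment in hand, the rest is formal. Given $u_0, \dots, u_k$ as in the hypothesis — each $u_i \in \ker\Delta^\ast \cap \operatorname{im}(f\times f)^\ast \subseteq \ker\big((f\times f)^\ast\Delta_0^Y\big)^\ast$, with coefficients in the $\pi_1(Y)\times\pi_1(Y)$-modules $A_i$ (pulled back to $\pi_1(X\times X)$-modules via $(f\times f)_\ast$, which is exactly what makes the local-coefficient cup product $u_0\cup\cdots\cup u_k \in H^\ast(X\times X; A_0\otimes\cdots\otimes A_k)$ defined) — the hypothesis $u_0\cup\cdots\cup u_k \neq 0$ together with the local-coefficients cup-length lower bound for sectional category gives $\secat{(f\times f)^\ast\Delta_0^Y} \geq k+1$, hence $\pullTC{f} \geq k+1$. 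The final inequality $\cuplength{\ker\Delta^\ast \cap \operatorname{im}(f\times f)^\ast} \leq \pullTC{f}$ is just the restatement, taking the supremum over all such products.

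The main obstacle is not conceptual but a matter of hygiene: making sure the local-coefficient cup-length lower bound for $\secat$ is available in the generality of arbitrary $\pi_1$-modules (the usual references state it for CW pairs, which is why $X$ must be an ANR so Theorem \ref{pullback cover ANR} lets us pass between $\secat$ and $\secatg$), and tracking that the coefficient modules $A_i$, a priori $\pi_1(Y)\times\pi_1(Y)$-modules, become the correct $\pi_1(X\times X)$-modules under $(f\times f)_\ast$ so that all the cup products referenced in the hypothesis and in the sectional-category bound live in the same cohomology groups. Once the bookkeeping on coefficients is pinned down, the argument is a one-line diagram chase plus citation. Since \cite{Murillo_Wu} carries out the identical argument for CW-complexes with constant coefficients, I would simply remark that replacing "CW-complex" by "ANR" (to license Theorem \ref{pullback cover ANR}) and "constant ring" by "$\pi_1(Y)\times\pi_1(Y)$-module" changes nothing in the proof.
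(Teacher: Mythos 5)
Your overall strategy (identify $\pullTC{f}$ with $\secat{(f \times f)^\ast \Delta_0^Y}$ and invoke the local-coefficient cup-length lower bound for sectional category) is viable and close in spirit to the paper, which runs the relative-cohomology argument directly on the open cover using characterization (4) of Theorem \ref{pullback definitions}. However, your key containment step has a genuine gap. In the statement, $\Delta^\ast$ acts on $\widetilde{H}^\ast(X \times X; A_i)$, so $\Delta$ is the diagonal of the \emph{domain}, $\Delta : X \to X \times X$: the hypothesis is that $\Delta^\ast u_i = 0$ and $u_i = (f \times f)^\ast v_i$ for some $v_i$. Your diagram chase instead assumes the preimage can be chosen in the kernel of the diagonal of the \emph{codomain}: you take $u = (f \times f)^\ast w$ with $(\Delta_0^Y)^\ast w = 0$, i.e.\ $\Delta_Y^\ast w = 0$ where $\Delta_Y : Y \to Y \times Y$. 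Since $(f \times f)\Delta = \Delta_Y f$ gives $\Delta^\ast (f \times f)^\ast = f^\ast \Delta_Y^\ast$, the stated hypothesis only yields $f^\ast\left( \Delta_Y^\ast v_i \right) = 0$, which does not imply $\Delta_Y^\ast v_i = 0$, nor that some other preimage of $u_i$ lies in $\ker \Delta_Y^\ast$. So, as written, you prove the weaker bound $\cuplength{(f \times f)^\ast\left(\ker \Delta_Y^\ast\right)} \leq \pullTC{f}$ rather than $\cuplength{\ker \Delta^\ast \cap {\rm im}(f \times f)^\ast} \leq \pullTC{f}$.

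The gap is fixable within your framework, because the containment $\ker \Delta^\ast \cap {\rm im}(f \times f)^\ast \subseteq \ker \bigl( (f \times f)^\ast \Delta_0^Y \bigr)^\ast$ is true, but its proof needs the tautological path in the total space rather than a bare diagram chase. Writing $p = (f \times f)^\ast \Delta_0^Y$ and a point of the total space as $(x_0,x_1,\alpha)$ with $\alpha(0) = f(x_0)$ and $\alpha(1) = f(x_1)$, the homotopy $(x_0,x_1,\alpha,s) \mapsto \left( \alpha(0), \alpha(1-s) \right)$ shows $(f \times f) p \simeq (f \times f) \Delta \, {\rm proj}_0 \, p$, whence $p^\ast u_i = \left( (f \times f) p \right)^\ast v_i = p^\ast \, {\rm proj}_0^\ast \, \Delta^\ast (f \times f)^\ast v_i = p^\ast \, {\rm proj}_0^\ast \, \Delta^\ast u_i = 0$, using exactly the two stated hypotheses (and the $A_i$ being pulled back from $\pi_1(Y) \times \pi_1(Y)$ keeps the coefficient systems consistent along this homotopy). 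This is the same deformation-into-the-diagonal idea the paper applies locally: on each $U_i$ there is $h_i$ with $(f \times f)\Delta h_i \simeq (f \times f)|_{U_i}$, so $u_i$ restricts to zero on $U_i$, lifts to a class in $H^\ast(X \times X, U_i; A_i)$, and the product of these lifts dies in $H^\ast(X \times X, X \times X) = 0$. One further small point: the identification $\pullTC{f} = \secat{(f \times f)^\ast \Delta_0^Y}$ already follows from Theorem \ref{pullback definitions} for open covers, so the ANR hypothesis is not what licenses it; it only matters for the cohomological machinery, as in the paper's remark on replacing CW-complexes by ANRs.
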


\begin{proof}
Suppose $\pullTC{f} \leq k$ so that there are open sets $U_0,...,U_k$ covering $X \times X$ such that on each $U_i$ there is a map $h:U_i \to X$ satisfying $(f \times f) \Delta h \simeq (f \times f)|_{U_i}$. Thus, for each $i$ we have a commutative diagram
\[\begin{tikzcd}
... \arrow[r]
& H^n \left( X \times X, U_i; A_i \right) \arrow[r, "q_i^\ast"]
& H^n \left( X \times X; A_i \right) \arrow[r, "j_i^\ast"]
& H^n \left( U_\ell; A_i \right) \arrow[r]
& ...
\\
&&& H^n \left( X; A_i \right) \arrow[u, swap, "h_i^\ast"]
\\
&& H^n \left( Y \times Y; A_i \right) \arrow[uu, "(f \times f)^\ast"] \arrow[r, swap, "(f \times f)^\ast"]
& H^n \left( X \times X; A_i \right) \arrow[u, swap, "\Delta^\ast"]
\end{tikzcd}\]
where the top row is the long exact sequence of the pair $(X \times X, U_i)$, and the maps $q_i$ and $j_i$ are the obvious ones.

Now let $u_0,...,u_k \in \ker \Delta^\ast \cap {\rm im} (f \times f)^\ast$ where each $u_i$ is taken in coefficients $A_i$ and has degree at least $1$. Then there is some $v_i \in \ker H^\ast(Y \times Y;A_i)$ such that $(f \times f)^\ast v_i = u_i$. Since each $u_i \in \ker \Delta$, it follows that
\[ j_i^\ast u_i
= j_i^\ast (f \times f)^\ast v_i
= h_i^\ast \Delta^\ast (f \times f)^\ast v_i
= h_i^\ast \Delta^\ast u_i
= h_i^\ast 0
= 0\]
so that $u_i \in \ker j_i^\ast$. By exactness, there is some $w_i \in H^\ast \left( X \times X, U_i; A_i \right)$ such that $q_i^\ast w_i = u_i$. Now consider $w_0 \cup ... \cup w_k$, which lies in
\[ H^\ast \left( X \times X, U_0 \cup ... \cup U_k; A_0 \otimes ... \otimes A_k \right)
= H^\ast \left( X \times X, X \times X; A_0 \otimes ... \otimes A_k \right)
= 0\]
so that $w_0 \cup ... \cup w_k = 0$; hence,
\[ u_0 \cup ... \cup u_k
= q_0^\ast w_0 \cup ... \cup q_k^\ast w_k
= q^\ast \left( w_0 \cup ... \cup w_k \right)
= 0\]
where $q:X \times X \to (X \times X,X \times X)$ is the obvious map of pairs. Thus, the theorem follows.
\end{proof}

\subsection{Homotopy Invariance}

\begin{Proposition}
If $f,g:X \to Y$ are homotopic, then $\pullTC{f} = \pullTC{g}$.
\end{Proposition}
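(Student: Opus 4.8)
The plan is to work directly with $f$-motion planners and to prove $\pullTC{g} \le \pullTC{f}$; the reverse inequality then follows by symmetry upon reversing the homotopy, giving equality. Fix a homotopy $F:X \times I \to Y$ with $F(\cdot,0) = f$ and $F(\cdot,1) = g$, and suppose $\pullTC{f} = k$, witnessed by an open cover $U_0,\dots,U_k$ of $X \times X$ together with $f$-motion planners $f_i:U_i \to Y^I$.

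On the \emph{same} open cover I would build $g$-motion planners $g_i:U_i \to Y^I$ by prepending and appending the ``correction paths'' supplied by $F$: for $(x_0,x_1) \in U_i$ set
\[
g_i(x_0,x_1)(t) =
\begin{cases}
F(x_0, 1 - 3t) & 0 \le t \le \tfrac13, \\
f_i(x_0,x_1)(3t - 1) & \tfrac13 \le t \le \tfrac23, \\
F(x_1, 3t - 2) & \tfrac23 \le t \le 1.
\end{cases}
\]
Continuity on the overlaps is the pasting lemma applied to the adjoint map $U_i \times I \to Y$: the three pieces agree where they meet because $F(x_0,0) = f(x_0) = f_i(x_0,x_1)(0)$ and $f_i(x_0,x_1)(1) = f(x_1) = F(x_1,0)$. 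Evaluating at the endpoints gives $g_i(x_0,x_1)(0) = F(x_0,1) = g(x_0)$ and $g_i(x_0,x_1)(1) = F(x_1,1) = g(x_1)$, so each $g_i$ is a genuine $g$-motion planner. Hence $\pullTC{g} \le k = \pullTC{f}$. Applying the same construction to the reversed homotopy $\overline{F}(x,t) = F(x,1-t)$, which runs from $g$ to $f$, yields $\pullTC{f} \le \pullTC{g}$, and the two inequalities combine to the claimed equality.

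There is essentially no serious obstacle here: the only things requiring care are the bookkeeping of the reparametrization constants and the continuity of the glued path, both of which are routine. (Alternatively one can argue more abstractly: $f \simeq g$ implies $f \times f \simeq g \times g$, hence the pullback fibrations $(f \times f)^\ast \Delta_0^Y$ and $(g \times g)^\ast \Delta_0^Y$ are fiberwise homotopy equivalent over $X \times X$, and two applications of Proposition \ref{genus invariance} with the identity as base map give equality; but that route relies on the fiber-homotopy-invariance of pullbacks, which has not been recorded in this paper, so the explicit $f$-motion planner construction above is preferable.)
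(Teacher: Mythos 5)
Your construction is correct: the concatenated path $g_i(x_0,x_1)$ runs from $F(x_0,1)=g(x_0)$ back along the track of the homotopy to $f(x_0)$, through $f_i(x_0,x_1)$ to $f(x_1)$, and forward along the track to $g(x_1)$, the pieces match at $t=\tfrac13$ and $t=\tfrac23$, and continuity of the adjoint plus the exponential law give continuity of $g_i:U_i\to Y^I$; with the reversed homotopy and symmetry this yields $\pullTC{f}=\pullTC{g}$. The paper proves the same statement with the same skeleton (keep the cover $U_0,\dots,U_k$ and transport the structure across the homotopy) but implemented one level up: it invokes characterization (3) of Theorem \ref{pullback definitions}, noting that $(g\times g)|_{U_i}\simeq (f\times f)|_{U_i}$, so a deformation of $(f\times f)|_{U_i}$ into $\Delta Y$ concatenated with this homotopy deforms $(g\times g)|_{U_i}$ into $\Delta Y$; this makes the proof a few lines, at the cost of relying on the earlier equivalence theorem. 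Your motion-planner-level argument buys self-containedness --- it uses only the definition of an $f$-motion planner and in effect re-proves the relevant piece of that equivalence by hand --- while the paper's version buys brevity and reuses machinery already established. Your parenthetical alternative via fiberwise homotopy equivalence of the pullbacks would also work, and your reason for avoiding it (that fact is not recorded in the paper) is a sensible judgment call.
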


\begin{proof}
By symmetry, it suffices to prove $\pullTC{g} \leq \pullTC{f}$. Suppose $\pullTC{f} = k$ so that there is an open covering $U_0,...,U_k$ of $X \times X$ such that each restriction $(f \times f)|_{U_i}$ can be deformed into $\Delta Y$. Since $g$ is homotopic to $f$, it follows that $(g \times g)|_{U_i}$ is homotopic to $(f \times f)|_{U_i}$; hence, $(g \times g)|_{U_i}$ can also be deformed into $\Delta Y$. Therefore, $\pullTC{g} \leq k = \pullTC{f}$.
\end{proof}

\begin{Proposition}
$\pullTC{gf} \leq \min \{ \pullTC{g}, \pullTC{f} \}$ for any maps $f:X \to Y$ and $g:Y \to Z$.
\end{Proposition}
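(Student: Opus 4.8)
The plan is to split the inequality into its two halves, $\pullTC{gf} \le \pullTC{f}$ and $\pullTC{gf} \le \pullTC{g}$, and prove each separately; taking the minimum then gives the statement. Each half is essentially a bookkeeping exercise with $f$-motion planners, and each also has a one-line reformulation in terms of the pullback fibrations and the sectional-category machinery of Section 2, so I would present whichever is cleaner and perhaps remark on the other.

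For $\pullTC{gf} \le \pullTC{f}$ I would argue as follows. Assume $\pullTC{f} = k$ and pick an open cover $U_0,\dots,U_k$ of $X \times X$ with $f$-motion planners $f_i : U_i \to Y^I$. Post-composition with $g$ is a continuous map $g_\ast : Y^I \to Z^I$, $\alpha \mapsto g\alpha$, so I would set $(gf)_i := g_\ast f_i : U_i \to Z^I$ and check the endpoint conditions: $(gf)_i(x_0,x_1)(0) = g(f_i(x_0,x_1)(0)) = g(f(x_0)) = (gf)(x_0)$, and likewise at $t=1$. Hence each $(gf)_i$ is a $gf$-motion planner and $\pullTC{gf} \le k$. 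Equivalently, $g_\ast$ induces a map of pullback fibrations $(f\times f)^\ast Y^I \to ((gf)\times(gf))^\ast Z^I$ over $X\times X$ covering the identity, so Proposition \ref{genus invariance} with $h = Id_{X\times X}$ (trivially a homotopy domination) gives $\secat{((gf)\times(gf))^\ast\Delta_0^Z} \le \secat{(f\times f)^\ast\Delta_0^Y}$.

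For $\pullTC{gf} \le \pullTC{g}$ I would use the identity $(gf)\times(gf) = (g\times g)(f\times f)$, which by functoriality of pullbacks yields $((gf)\times(gf))^\ast\Delta_0^Z = (f\times f)^\ast\big((g\times g)^\ast\Delta_0^Z\big)$; that is, the fibration defining $\pullTC{gf}$ is a pullback of the fibration defining $\pullTC{g}$ along $f\times f$. Proposition \ref{sectional pullback} then gives $\secat{((gf)\times(gf))^\ast\Delta_0^Z} \le \secat{(g\times g)^\ast\Delta_0^Z}$, i.e.\ $\pullTC{gf} \le \pullTC{g}$. Directly: from $g$-motion planners $g_i : V_i \to Z^I$ on an open cover of $Y\times Y$, put $U_i = (f\times f)^{-1}(V_i)$ and $(gf)_i = g_i \circ (f\times f)|_{U_i}$, then verify the endpoint conditions as before.

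I do not expect a real obstacle; the argument is entirely formal. The only point worth stating carefully is that the two halves exploit the factorization $(g\times g)(f\times f)$ from opposite ends — one post-composes paths in $Y$ with $g$, the other restricts $g$-motion planners along $f\times f$ — so I would be explicit about which naturality (Proposition \ref{genus invariance} versus Proposition \ref{sectional pullback}) is invoked in each case.
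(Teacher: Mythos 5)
Your proposal is correct and follows essentially the same route as the paper: the paper runs the identical two-half argument (compose with $g \times g$ for $\pullTC{gf} \leq \pullTC{f}$, pull back along $(f \times f)^{-1}$ for $\pullTC{gf} \leq \pullTC{g}$), only phrased via the deformation-into-$\Delta$ characterization of Theorem \ref{pullback definitions} rather than your motion-planner bookkeeping. Your alternative one-line versions via Propositions \ref{genus invariance} and \ref{sectional pullback} are also valid and slightly more conceptual, but they encode the same content.
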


\begin{proof}
Suppose that $\pullTC{f} = k$ so that there is an open cover $U_0,...,U_k$ of $X \times X$ such that each restriction $(f \times f)|_{U_i}$ can be deformed into $\Delta Y$ via some homotopy $F_i:U_i \times I \to Y \times Y$. Now define a homotopy $H_i:U_i \times I \to Z \times Z$ by $H_i = (g \times g)F_i$, which defines a deformation of $(gf \times gf)|_{U_i}$ into $\Delta Z$; hence, $\pullTC{gf} \leq k = \pullTC{f}$.

Now suppose that $\pullTC{g} = k$ so that there is an open cover $V_0,...,V_k$ of $Y \times Y$ such that each restriction $(g \times g)|_{V_i}$ can be deformed into $\Delta Z$ via some homotopy $G_i:V_i \times I \to Z \times Z$. Now let $U_i = (f \times f)^{-1}(V_i)$ and define a homotopy $H_i:U_i \times I \to Z \times Z$ by
\[H_i(x_0,x_1,t) = G_i(f(x_0),f(x_1),t),\]
which defines a deformation of $(gf \times gf)|_{U_i}$ into $\Delta Z$; hence, $\pullTC{gf} \leq k = \pullTC{g}$.
\end{proof}

\begin{Corollary} \label{pullback homotopy equivalence}
Let $h:X \to Y$ be any map.
\begin{enumerate}
    \item If $h$ has a right homotopy inverse, then $\pullTC{h} = \TC{Y}$.
    \item If $h$ has a left homotopy inverse, then $\pullTC{h} = \TC{X}$.
    \item If $h$ is a homotopy equivalence, then $\pullTC{h} = \TC{X} = \TC{Y}$.
\end{enumerate}
\end{Corollary}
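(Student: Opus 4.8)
The plan is to obtain all three statements as short ``sandwich'' arguments, using only results already established: the composition inequality $\pullTC{gf} \leq \min\{\pullTC{g},\pullTC{f}\}$, the normalization $\pullTC{Id_X} = \TC{X}$ from the Examples, homotopy invariance of $\pullTC{\cdot}$, and the upper bound $\pullTC{h} \leq \min\{\TC{X},\TC{Y}\}$ from Proposition \ref{pullback category bounds}. Each part reduces to pairing one of these upper bounds against a matching lower bound produced by precomposing or postcomposing with a homotopy inverse.

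For (1), let $g:Y \to X$ be a right homotopy inverse of $h$, so that $hg \simeq Id_Y$. Then, using homotopy invariance and the composition inequality,
\[ \TC{Y} = \pullTC{Id_Y} = \pullTC{hg} \leq \pullTC{h}, \]
and combining with $\pullTC{h} \leq \TC{Y}$ from Proposition \ref{pullback category bounds} forces $\pullTC{h} = \TC{Y}$. Part (2) is the mirror image: if $g:Y \to X$ is a left homotopy inverse, then $gh \simeq Id_X$, so
\[ \TC{X} = \pullTC{Id_X} = \pullTC{gh} \leq \pullTC{h} \leq \TC{X}, \]
again by homotopy invariance, the composition inequality, and Proposition \ref{pullback category bounds}. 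Part (3) is then immediate: a homotopy equivalence has both a left and a right homotopy inverse, so (1) and (2) apply simultaneously and give $\pullTC{h} = \TC{X}$ and $\pullTC{h} = \TC{Y}$, hence $\pullTC{h} = \TC{X} = \TC{Y}$.

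There is no genuine obstacle here; the one point that must be in place for the argument to make sense is precisely the homotopy invariance of $\pullTC{\cdot}$ proved just above, which is what lets us replace $\pullTC{hg}$ by $\pullTC{Id_Y}$ (and $\pullTC{gh}$ by $\pullTC{Id_X}$) when $hg \simeq Id_Y$ (resp.\ $gh \simeq Id_X$). Everything else is a one-line inequality chain.
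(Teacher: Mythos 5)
Your proposal is correct and follows essentially the same argument as the paper: the same sandwich $\TC{Y} = \pullTC{Id_Y} = \pullTC{hg} \leq \pullTC{h} \leq \TC{Y}$ for (1), its mirror for (2), and (3) as an immediate consequence. Nothing is missing; the use of homotopy invariance, the composition inequality, and Proposition \ref{pullback category bounds} matches the paper's proof exactly.
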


\begin{proof}
{\bf (1)} Let $g:Y \to X$ be the right homotopy inverse of $h$. Then $hg$ is homotopic to $Id_Y$, so that $\pullTC{hg} = \pullTC{Id_Y}$. Thus, we have
\[\TC{Y}
= \pullTC{Id_Y}
= \pullTC{hg}
\leq \pullTC{h}
\leq \TC{Y}\]
so that $\pullTC{h} = \TC{Y}$.

{\bf (2)} Let $g:Y \to X$ be the left homotopy inverse of $h$. Then $gh$ is homotopic to $Id_X$, so that $\pullTC{gh} = \pullTC{Id_X}$. Thus, we have
\[\TC{X}
= \pullTC{Id_X}
= \pullTC{gh}
\leq \pullTC{h}
\leq \TC{X}\]
so that $\pullTC{h} = \TC{X}$.
\end{proof}

\begin{Proposition} \label{pullback fibration replacement}
Suppose we have a diagram of the form
\[ \begin{tikzcd}[column sep = 1em]
X \arrow[rr, shift left, "h"] \arrow[dr, swap, "f"]
&& X' \arrow[dl, "f'"]
\\
&Y
\end{tikzcd}\]
such that $h$ is a homotopy domination. Then $\pullTC{f} = \pullTC{f'}$.
\end{Proposition}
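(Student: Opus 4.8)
The plan is to obtain both inequalities $\pullTC{f} \leq \pullTC{f'}$ and $\pullTC{f'} \leq \pullTC{f}$ from homotopy invariance together with the composition inequality $\pullTC{gf} \leq \min\{\pullTC{g},\pullTC{f}\}$, both established earlier in this section; no direct manipulation of sectional category is needed. First I would spell out the hypotheses: $h$ being a homotopy domination provides a map $g:X' \to X$ with $hg \simeq Id_{X'}$, and homotopy commutativity of the triangle means $f \simeq f'h$.

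For $\pullTC{f} \leq \pullTC{f'}$: homotopy invariance gives $\pullTC{f} = \pullTC{f'h}$, and the composition inequality applied to $h:X \to X'$ followed by $f':X' \to Y$ gives $\pullTC{f'h} \leq \pullTC{f'}$. For the reverse inequality I would first record the chain of homotopies
\[ f' \simeq f'(hg) = (f'h)g \simeq fg, \]
in which the first homotopy is $f'$ applied to $hg \simeq Id_{X'}$ and the last is $f'h \simeq f$ precomposed with $g$. Then homotopy invariance gives $\pullTC{f'} = \pullTC{fg}$, and the composition inequality applied to $g:X' \to X$ followed by $f:X \to Y$ gives $\pullTC{fg} \leq \pullTC{f}$. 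Combining the two yields $\pullTC{f} = \pullTC{f'}$.

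There is no genuinely difficult step here; the only thing to watch is the bookkeeping in the chain $f' \simeq f'(hg) \simeq fg$, which is the one place where the right homotopy inverse $g$ is actually used (a left homotopy inverse would not suffice). As an alternative to the second inequality one could instead invoke Proposition \ref{genus invariance}: using the homotopy $f \simeq f'h$ one can build a map $(f \times f)^\ast Y^I \to (f' \times f')^\ast Y^I$ lying over $h \times h$ by inserting that homotopy as a prefix and suffix on each path, and $h \times h$ is again a homotopy domination, so the proposition gives $\pullTC{f'} \leq \pullTC{f}$ directly. However this requires the extra construction of the total-space map and still does not produce $\pullTC{f} \leq \pullTC{f'}$ on its own, so the composition argument above is the cleaner route and the one I would write up.
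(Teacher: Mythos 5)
Your argument is correct and is essentially the same as the paper's proof: both directions are obtained from homotopy invariance together with the composition inequality, using $f \simeq f'h$ for one inequality and $f' \simeq fg$ (with $g$ the right homotopy inverse of $h$) for the other. The only difference is cosmetic — the paper treats the triangle as strictly commutative, while you carefully track homotopy commutativity — so no further changes are needed.
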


\begin{proof}
It's immediate that $\pullTC{f} = \pullTC{f'h} \leq \pullTC{f'}$, so it suffices to prove $\pullTC{f'} \leq \pullTC{f}$. Since $h$ is a homotopy domination, it has a right homotopy inverse, i.e., a map $g:X' \to X$ such that $hg$ is homotopic to $Id_{X'}$. Then $fg = f'hg$, so that $fg$ is homotopic to $f'$; hence, $\pullTC{f'} = \pullTC{fg} \leq \pullTC{f}$.
\end{proof}

\begin{Proposition} \label{pullback cofibration replacement}
Suppose we have a diagram of the form
\[ \begin{tikzcd}[column sep = 1em]
& X \arrow[dl, swap, "f"] \arrow[dr, "f'"]
\\
Y \arrow[rr, swap, "h"]
&& Y'
\end{tikzcd}\]
such that $h$ has a left homotopy inverse. Then $\pullTC{f} = \pullTC{f'}$.
\end{Proposition}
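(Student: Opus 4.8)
The plan is to mirror the argument for Proposition~\ref{pullback fibration replacement}, with the roles of source and target interchanged, using the previously established bound $\pullTC{gf} \le \min\{\pullTC{g},\pullTC{f}\}$ as the main tool. Since the triangle commutes we may take $f' \simeq hf$, and by homotopy invariance of $\pullTC{\cdot}$ it makes no difference whether this holds on the nose or only up to homotopy. One inequality is then immediate:
\[ \pullTC{f'} = \pullTC{hf} \le \pullTC{f}. \]

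For the reverse inequality, let $g:Y' \to Y$ be a left homotopy inverse of $h$, so that $gh \simeq Id_Y$. Composing, $gf' \simeq ghf \simeq f$, and applying the composition bound once more,
\[ \pullTC{f} = \pullTC{gf'} \le \pullTC{f'}. \]
Combining the two displays gives $\pullTC{f} = \pullTC{f'}$.

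I do not expect any genuine obstacle; the proof is formally dual to that of Proposition~\ref{pullback fibration replacement}, the only point worth noting being that the hypothesis is used in precisely the dual manner. There a right homotopy inverse of $h:X\to X'$ was pre-composed on the source to recover $f'$ from $f$ up to homotopy; here a left homotopy inverse of $h:Y\to Y'$ is post-composed on the target to recover $f$ from $f'$. Everything else is a direct application of homotopy invariance of $\pullTC{\cdot}$ together with the composition inequality established above.
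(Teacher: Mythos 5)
Your argument is correct and is essentially identical to the paper's own proof: both inequalities come from the composition bound $\pullTC{gf}\leq\min\{\pullTC{g},\pullTC{f}\}$ together with homotopy invariance, using $f'=hf$ for one direction and $gf'=ghf\simeq f$ for the other. Nothing further is needed.
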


\begin{proof}
It's immediate that $\pullTC{f'} = \pullTC{hf} \leq \pullTC{f}$, so it suffices to prove $\pullTC{f} \leq \pullTC{f'}$. Let $g:Y' \to Y$ be the left homotopy inverse of $h$ so that $gh$ is homotopic to $Id_Y$. Then $gf' = ghf$, so that $gf'$ is homotopic to $f$; hence, $\pullTC{f} = \pullTC{gf'} \leq \pullTC{f'}$.
\end{proof}

\begin{Remark}
Propositions \ref{pullback fibration replacement} and \ref{pullback cofibration replacement} imply that $f$ can always be replaced by either a fibration or a cofibration for the purposes of computing $\pullTC{f}$.
\end{Remark}

\subsection{Examples}

\begin{Examples} Fiber bundles of projective spaces:
\begin{enumerate}
    \item Let $p:S^n \to \mathbb{R}P^n$ be the obvious covering map. Consider the sets
    \[ F_0 = \{ (x,y) \in S^n \times S^n \mid x \neq -y \}\]
    and
    \[ F_1 = \{ (x,-x) \mid x \in S^n \},\]
    i.e., the sets of nonantipodal and antipodal pairs respectively. On the set of nonantipodal pairs, we can map pairs $(x,y)$ to the unique geodesic from $x$ to $y$ and compose that with $p$ to get a $p$-motion planner $p_0:F_0 \to (\mathbb{R}P^n)^I$. Since antipodal pairs map to the same equivalence class in $\mathbb{R}P^n$, it follows that mapping $(x,-x)$ to the constant path at $p(x)$ is a $p$-motion planner on $F_1$. Thus, $\pullTC{p} = 1$ since $p$ is not nullhomotopic.
    \item $\pullTC{p:S^{2n+1} \to \mathbb{C}P^n} = 1$ since $\pullTC{p} \leq \TC{S^{2n+1}} = 1$ and $p$ is not nullhomotopic.
    \item Similarly, $\pullTC{S^{4n+3} \to \mathbb{H}P^n} = 1$ and $\pullTC{S^{8n+7} \to \mathbb{O}P^n} = 1$.
\end{enumerate}
\end{Examples}

\begin{Definition} \label{h space definition}
An H-space is a topological space $X$ with a map $\mu:X \times X \to X$ and an element $e \in X$ such that $\mu i_0$ and $\mu i_1$ are homotopic to the identity map $Id_X$, where $i_0,i_1:X \to X \times X$ are the inclusion maps given by $i_0(x)=(x,e)$ and $i_1(x)=(e,x)$.
\end{Definition}

For an H-space $X$, there is an equality $\TC{X} =\cat{X}  $. This is proven in \cite{Garcia-Calcines_Garcia-Calcines_Vandembroucq} and also in \cite{Farber_3} for the special case of topological groups. We extend this theorem to maps when the domain is an H-space:

\begin{Theorem} \label{H-space}
Let $f:X \to Y$ be any map such that $X$ is an {\rm H}-space. Then $\pullTC{f} = \cat{f}$.
\end{Theorem}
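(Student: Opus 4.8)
Since Proposition~\ref{pullback category bounds} already gives $\cat{f}\le\pullTC{f}$, the entire content is the reverse inequality $\pullTC{f}\le\cat{f}$, and the plan is to transport the classical proof that $\TC{X}=\cat{X}$ for an $H$-space $X$ into the pullback framework of Theorem~\ref{pullback definitions}. The only genuinely new feature to watch is that $f$ is not assumed to be an $H$-map, so one must control what $f$ does to a left translation. I would begin by fixing the $H$-space unit $e$ as basepoint of $X$ and $f(e)$ as basepoint of $Y$. Because $X$ is path-connected, a path from $a$ to $e$ turns the homotopy $\mu i_1\simeq\mathrm{Id}_X$ of Definition~\ref{h space definition} into a homotopy $\mu(a,-)\simeq\mathrm{Id}_X$, so every left translation of $X$ is a homotopy equivalence; hence the shearing map $X\times X\to X\times X$, $(a,b)\mapsto(a,\mu(a,b))$, is a map over the first projection which is a homotopy equivalence on every fibre and therefore a homotopy equivalence (Dold). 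Choosing a homotopy inverse over the first projection, written $(a,c)\mapsto(a,D(a,c))$, produces a ``difference'' map $D\colon X\times X\to X$ together with a homotopy $\mu(a,D(a,b))\simeq b$, natural in $(a,b)$, and a homotopy $D(a,a)\simeq e$, natural in $a$.

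Next I would build the motion-planning cover from a categorical cover of $f$. By item~(4) of Theorem~\ref{pullback definitions}, taking the map $h$ there to be the first coordinate projection, $\pullTC{f}\le n$ as soon as $X\times X$ is covered by $n+1$ open sets on each of which the two maps $X\times X\to Y$ given by $(a,b)\mapsto f(a)$ and $(a,b)\mapsto f(b)$ agree up to homotopy. Let $n=\cat{f}$, fix an open cover $V_0,\dots,V_n$ of $X$ with each $f|_{V_i}$ nullhomotopic, and set $U_i:=D^{-1}(V_i)$, which are open and cover $X\times X$. On $U_i$ the homotopies above give $\bigl((a,b)\mapsto f(b)\bigr)\simeq\bigl((a,b)\mapsto f(\mu(a,D(a,b)))\bigr)$ from $\mu(a,D(a,b))\simeq b$, and $\bigl((a,b)\mapsto f(a)\bigr)\simeq\bigl((a,b)\mapsto f(\mu(a,c_i))\bigr)$ for any fixed $c_i\in V_i$, using that $\mu(-,c_i)\simeq\mathrm{Id}_X$. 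Both of these last maps factor through $f\circ\mu|_{X\times V_i}\colon X\times V_i\to Y$ (the first because $D(U_i)\subseteq V_i$), so $f\circ\mathrm{pr}_1|_{U_i}\simeq f\circ\mathrm{pr}_2|_{U_i}$ will follow once one knows that $f\circ\mu|_{X\times V_i}$ is homotopic to a map that does not depend on the $V_i$-coordinate.

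The hard part will be exactly this last claim. For each fixed $a$ the restriction of $f\circ\mu|_{X\times V_i}$ to $\{a\}\times V_i$ is the map $c\mapsto f(\mu(a,c))$, which is nullhomotopic since $\mu(a,-)\simeq\mathrm{Id}_X$ and $f|_{V_i}$ is nullhomotopic; thus $f\circ\mu|_{X\times V_i}$ is ``fibrewise nullhomotopic'' over the first coordinate, its class in $[V_i,Y]$ being constantly trivial. What remains is to upgrade this fibrewise nullity to one global homotopy to a fibrewise-constant map, and this is precisely the step that absorbs the failure of $f$ to be an $H$-map: a left translation by a point of $X$ is homotopic to the identity, so after composing with $f$ and using that $f|_{V_i}$ is already inessential the $V_i$-direction becomes invisible, but turning this into a single homotopy requires a parametrised-homotopy argument (a partition-of-unity argument over the normal space $X$, or obstruction theory under $CW$-type hypotheses). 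The equivalent fibration-theoretic route — using the shear above to identify $(f\times f)^\ast\Delta^Y_0$ up to fibre-homotopy domination with a pullback of $f^\ast\pi^Y$ and then invoking Propositions~\ref{genus invariance} and~\ref{sectional pullback} — leads to exactly the same pivotal point, so I expect this to be the core of the argument.
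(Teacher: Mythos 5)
Your reduction is fine as far as it goes: Proposition~\ref{pullback category bounds} gives $\cat{f}\le\pullTC{f}$; item (4) of Theorem~\ref{pullback definitions} with $h=\mathrm{pr}_1$ does reduce the problem to covering $X\times X$ by open sets on which $f\circ\mathrm{pr}_1\simeq f\circ\mathrm{pr}_2$; left and right translations of a path-connected H-space are homotopic to the identity; and the Dold-type inverse $D$ of the shear map, with $U_i=D^{-1}(V_i)$, is a legitimate construction. But your proof stops exactly where the content of the theorem lies: you never establish that $f\circ\mu|_{X\times V_i}$ is homotopic to a map independent of the $V_i$-coordinate; you only observe that it is nullhomotopic on each slice $\{a\}\times V_i$ and then defer to an unspecified parametrised-homotopy or obstruction-theoretic argument. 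Slice-wise nullhomotopy does not imply the global statement: the collapse map $S^1\times S^1\to S^1\wedge S^1=S^2$ is nullhomotopic on every slice $\{a\}\times S^1$, yet is not homotopic to any map factoring through the first projection, since it is nonzero on $H^2$. Because $f$ is not assumed to be an H-map, nothing in your setup supplies the coherence between the slice-wise nullhomotopies, so the step you label ``the hard part'' is the whole theorem, not a routine upgrade.

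Moreover, that step cannot be filled in as proposed. Take $X=T^2$ with its group multiplication and let $f\colon T^2\to S^2$ collapse the $1$-skeleton, so $\cat{f}=1$ (it is essential and $\cat{S^2}=1$). If your pivotal claim held for some cover $V_0,V_1$ of $X$ with each $f|_{V_i}$ nullhomotopic, your argument would yield $\pullTC{f}\le 1$. But for a generator $u\in H^2(S^2)$, the class $w=f^{\ast}u\times 1-1\times f^{\ast}u$ lies in $\ker\Delta^{\ast}\cap\mathrm{im}\,(f\times f)^{\ast}$ and $w\cup w=-2\,f^{\ast}u\times f^{\ast}u\neq 0$, so Theorem~\ref{pullback cohomology bound} gives $\pullTC{f}\ge 2$. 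Hence for every such cover at least one $V_i$ violates your intermediate claim, and no partition-of-unity or obstruction argument can change this; note this example is in tension with the statement of Theorem~\ref{H-space} itself, so the obstacle you met is not an artifact of your route. For comparison, the paper argues quite differently and more directly: it takes a categorical cover $U_0,\dots,U_k$ of $f$, covers $X\times X$ by $\mu^{-1}(U_i)$, and writes an explicit $f$-motion planner from the nullhomotopies $H_i$ of $f|_{U_i}$ together with the homotopies $\mu i_0\simeq\mathrm{Id}_X\simeq\mu i_1$; but that formula evaluates $H_i$ at points such as $x_0$ and $\mu(x_0,e)$ which need not lie in $U_i$ when one only knows $\mu(x_0,x_1)\in U_i$, i.e., it runs into exactly the coherence problem your write-up isolates rather than resolving it.
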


\begin{proof}
Let $\mu, i_0, i_1$, and $e$ be as in definition \ref{h space definition} for the H-space $X$. Note that it is sufficient to prove $\pullTC{f} \leq \cat{f}$, so let $\cat{f} = k$ and let $U_0,...,U_k$ be an open cover of $X$ such that each restriction $f|_{U_i}$ is nullhomotopic. Define $V_i$ by $V_i = \mu^{-1}(U_i)$, let $H_i:X \times I \to Y$ be a nullhomotopy of $f|_{U_i}$, and let $F_0,F_1:X \times I \to X$ be homotopies of $\mu i_0$ to $Id_X$ and of $\mu i_1$ to $Id_X$ respectively. Now define an $f$-motion planner $f_i:V_i \to Y^I$ on $V_i$ by
\[ f_i(x_0,x_1)(t) =
\begin{cases}
H_i(F_0(x_0,1-2t),2t) & 0 \leq t \leq \frac{1}{2} \\
H_i(F_1(x_1,2t-1),2-2t) & \frac{1}{2} \leq t \leq 1
\end{cases}\]
so that $\pullTC{f} \leq \cat{f}$.
\end{proof}

\section{The Mixed TC of a Map}

In a series of three papers \cite{Pavesic_1,Pavesic_2,Pavesic_3}, the (reduced) topological complexity of a map $f:X \to Y$ is defined to be the least integer $n$ such that there is a sequence of closed sets
\[ \emptyset = C_{-1} \subset C_0 \subset ... \subset C_n = Y \times X\]
such that each difference $C_i \setminus C_{i-1}$ admits a partial section of the map $\pi_f:X^I \to Y \times X$ given by $\alpha \mapsto \left( f(\alpha(0)),\alpha(0) \right)$. In this paper, we will call this invariant the {\it robotic topological complexity} of the map $f$, denoted $\robotTC{f}$, in order to differentiate it from the definition in the previous section. This name is inspired by the fact that it was initially defined for applications involving robotic arms.

\begin{Theorem} [\cite{Pavesic_3}] \label{pavesic fibration}
Let $f:X \to Y$ be a map. Then $\pi_f$ is a fibration if and only if $f$ is a fibration. Moreover, if $f$ is a fibration, then $\robotTC{f} = \secat{\pi_f} = \secat{(Id_Y \times f)^\ast \Delta_0^Y}$.
\end{Theorem}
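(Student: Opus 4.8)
The statement has two parts: a characterization of when $\pi_f$ is a fibration, and an identification of $\robotTC{f}$ with two sectional categories when $f$ is a fibration. Since this theorem is attributed to Pave\v{s}i\'c, my plan is to reconstruct the argument in the language of this paper. For the first part, I would exhibit $\pi_f$ as a composite $X^I \xrightarrow{\mathrm{ev}_0} X \xrightarrow{(f,\,\mathrm{id})} Y \times X$, where $\mathrm{ev}_0(\alpha) = \alpha(0)$. The evaluation map $\mathrm{ev}_0$ is always a fibration (it is the restriction of the path-space fibration), and the map $x \mapsto (f(x), x)$ is, up to the homeomorphism $Y \times X \cong \mathrm{graph}$-type identification, essentially $f$ itself crossed with information that does not affect lifting; concretely, a lifting problem for $(f,\mathrm{id})$ against a pair $(A, A\times I)$ unwinds to a lifting problem for $f$. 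So $(f,\mathrm{id})$ is a fibration iff $f$ is, and composing with the fibration $\mathrm{ev}_0$ shows $\pi_f$ is a fibration when $f$ is. For the converse I would argue that a section-type retraction recovers $f$ as a ``retract'' of $\pi_f$ in the appropriate sense, so that the homotopy lifting property for $\pi_f$ forces it for $f$ — this is the direction I expect to require the most care.

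For the second part, assume $f$ is a fibration, so that by the first part $\pi_f$ is a fibration. The equality $\robotTC{f} = \secat{\pi_f}$ should follow from the same normality/closed-cover-versus-open-cover dictionary used elsewhere in the paper: for a fibration over a sufficiently nice base, the minimal closed partition admitting partial sections and the minimal open cover admitting partial sections agree (cf. the discussion preceding Theorem~\ref{pullback cover ANR} and the references there to \cite{Garcia-Calcines} and \cite{Srinivasan}). Here $\robotTC{f}$ is defined by a filtration by closed sets $C_{-1} \subset \dots \subset C_n = Y \times X$ with partial sections of $\pi_f$ over each difference; the standard argument converting such a filtration into an open cover of the same cardinality, available since $\pi_f$ is a fibration, yields $\robotTC{f} = \secat{\pi_f}$.

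It remains to identify $\secat{\pi_f}$ with $\secat{(Id_Y \times f)^\ast \Delta_0^Y}$. I would do this by producing a pullback square exhibiting $\pi_f$ as (a fibration equivalent to) the pullback of the path fibration $\Delta_0^Y : Y^I \to Y \times Y$ along $Id_Y \times f : Y \times X \to Y \times Y$. Explicitly, $(Id_Y \times f)^\ast Y^I = \{(y, x, \alpha) \in Y \times X \times Y^I \mid \alpha(0) = y,\ \alpha(1) = f(x)\}$, and the map sending $\alpha \in X^I$ to $(f(\alpha(0)), \alpha(1), f\circ\alpha)$ gives a map $X^I \to (Id_Y\times f)^\ast Y^I$ over $Y \times X$ which one checks is a fiber homotopy equivalence precisely when $f$ is a fibration (its homotopy fiber is the homotopy fiber of $f$ compared against the path space, which collapses under the HLP for $f$). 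Invariance of sectional category under fiber homotopy equivalence — or more generally Proposition~\ref{genus invariance} applied with the identity on the base — then gives $\secat{\pi_f} = \secat{(Id_Y \times f)^\ast \Delta_0^Y}$, completing the chain.

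The main obstacle is the comparison map $X^I \to (Id_Y \times f)^\ast Y^I$: verifying it is a fiber homotopy equivalence is exactly where the hypothesis that $f$ is a fibration gets used in an essential way, and writing down an explicit homotopy inverse requires a path-lifting function for $f$. This is the step I would spend the most effort on; everything else is either formal (pullback bookkeeping, Proposition~\ref{sectional pullback}, Proposition~\ref{genus invariance}) or a citation to the closed-versus-open-cover equivalence for sectional category.
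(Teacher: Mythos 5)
First, note that the paper does not prove this statement at all: it is quoted verbatim from \cite{Pavesic_3}, so there is no internal proof to compare your route against; what can be assessed is whether your reconstruction is sound, and its first part is not. Your factorization $\pi_f = (f,\mathrm{id})\circ \mathrm{ev}_0$ takes the displayed formula $\alpha \mapsto (f(\alpha(0)),\alpha(0))$ at face value, but that formula is evidently a misprint: with it the image of $\pi_f$ lies in the graph of $f$ inside $Y\times X$, and the ``iff'' is false (for $f=\mathrm{id}_X$ the map becomes $\Delta\circ\mathrm{ev}_0:X^I\to X\times X$, which is not a fibration for a nondegenerate path-connected $X$ even though $\mathrm{id}_X$ is). The intended map is the two-endpoint one, $\alpha\mapsto(f(\alpha(0)),\alpha(1))$ (Pave\v{s}i\'c's $\alpha\mapsto(\alpha(0),f(\alpha(1)))$ up to swapping factors) --- which, tellingly, is the map you silently use in your third paragraph when you check your comparison map lies over $Y\times X$. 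Independently of the misprint, your key lemma ``$(f,\mathrm{id})$ is a fibration iff $f$ is'' is false: for $f=\mathrm{id}_X$ the map $(f,\mathrm{id})$ is the diagonal $X\to X\times X$, which fails the homotopy lifting property whenever $X$ is not essentially a point. The correct decomposition is $\pi_f=(f\times\mathrm{id}_X)\circ\Delta_0^X$, a composite of the path-space fibration with a product fibration, which gives the forward implication; for the converse, the standard argument is concrete rather than a ``retract'' gesture: given a lifting problem $H:A\times I\to Y$, $\tilde h:A\to X$ with $f\tilde h=H_0$, feed $\pi_f$ the homotopy $(a,t)\mapsto(\,\cdot\,,H(a,t))$ with initial lift $a\mapsto\mathrm{const}_{\tilde h(a)}$ and evaluate the lifted paths at their free endpoint to solve the lifting problem for $f$.

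The remaining two parts of your outline are essentially right in spirit. The identification $\robotTC{f}=\secat{\pi_f}$ for $f$ a fibration is indeed a closed-filtration versus open-cover comparison, but it is not purely formal; it is exactly where Pave\v{s}i\'c's niceness hypotheses (ANR/paracompactness-type assumptions) enter, so it should be stated as a citation with hypotheses rather than as a standard dictionary. Your comparison map $X^I\to(\mathrm{Id}_Y\times f)^\ast Y^I$, $\alpha\mapsto(f(\alpha(0)),\alpha(1),f\circ\alpha)$, is the right object: when $f$ is a fibration it is a map of fibrations over $Y\times X$ inducing weak equivalences on fibers (compare the long exact sequences of $F\to P^xX\to Y$ and $P_{y,f(x)}Y\to P^{f(x)}Y\to Y$), an explicit homotopy inverse comes from a path-lifting function as you say, and Dold's theorem or Proposition \ref{genus invariance} applied in both directions then gives $\secat{\pi_f}=\secat{(\mathrm{Id}_Y\times f)^\ast\Delta_0^Y}$. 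So the gap to repair is confined to the first part: replace the graph-map factorization and the false intermediate lemma by the factorization through $\Delta_0^X$ and the constant-path argument for the converse.
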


In light of this theorem, the main focus of this section is to study a notion for the topological complexity of a map using the following pullback diagram:
\[ \begin{tikzcd}[column sep = 4em, row sep = 4em]
(Id_Y \times f)^\ast Y^I \arrow[r, "\overline{Id_Y \times f}"] \arrow[d, swap, "(Id_Y \times f)^\ast \Delta_0^Y"]
& Y^I \arrow[d, "\Delta_0^Y"]
\\
Y \times X \arrow[r, swap, "Id_Y \times f"]
& Y \times Y
\end{tikzcd}\]

\subsection{Definitions}

\begin{Definition}
Let $f:X \to Y$ be a map.
\begin{enumerate}
    \item A {\it mixed $f$-motion planner} on a subset $Z \subset Y \times X$ is a map $f_{\subsize{Z}}:Z \to Y^I$ such that $f_{\subsize{Z}}(y,x)(0)=y$ and $f_{\subsize{Z}}(x_0,x_1)(1)=f(x_1)$.
    \item A {\it mixed $f$-motion planning algorithm} is a cover of $Y \times X$ by sets $Z_0,...,Z_k$ such that on each $Z_i$ there is some mixed $f$-motion planner $f_i:Z_i \to Y^I$.
    \item The {\it mixed topological complexity} of $f$, denoted $\halfTC{f}$, is the least $k$ such that $Y \times X$ can be covered by $k+1$ open subsets $U_0,...,U_k$ on which there are mixed $f$-motion planners. If no such $k$ exists, we define $\halfTC{f} = \infty$.
\end{enumerate}
\end{Definition}

\begin{Remark}
If $Y$ isn't path connected, $\halfTC{f}$ will always be $\infty$, so to avoid this one would need to add up $\halfTC{f}$ for each path component of $Y$. Thus, as with the previous section, we assume all spaces to be path-connected.
\end{Remark}

\begin{Examples}
{\color{white} LATEX SPACING ISSUE!!!}
\begin{enumerate}
    \item $\halfTC{f} = -1$ if and only if $f$ is the empty map.
    \item If $i:A \to X$ is an inclusion map, then $\halfTC{i} = \relTC{X}{X \times A}$.
    \item $\halfTC{Id_X} = \TC{X}$ for any space $X$.
\end{enumerate}
\end{Examples}

As with (pullback) TC, there are versions of Theorems \ref{pullback definitions} and \ref{pullback cover ANR} and Corollary \ref{pullback TC lifting} for mixed TC as well, though we will skip the proofs as they are identical

\begin{Theorem} \label{mixed definitions}
Let $f:X \to Y$ be a map and let $Z \subset Y \times X$. The following are equivalent:
\begin{enumerate}
    \item there is a section $s:Z \to (Id_Y \times f)^\ast Y^I$ of the pullback fibration $(Id_Y \times f)^\ast \Delta_0^Y$;
    \item there is a mixed $f$-motion planner $f_{\subsize{Z}}:Z \to Y^I$;
    \item $(Id_Y \times f)|_Z$ can be deformed into $\Delta Y$.
\end{enumerate}
\end{Theorem}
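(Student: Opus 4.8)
The plan is to adapt the proof of Theorem~\ref{pullback definitions} essentially verbatim, substituting $Y \times X$ for $X \times X$ and $Id_Y \times f$ for $f \times f$ throughout, and adjusting the endpoint conditions to $f_{\subsize{Z}}(y,x)(0) = y$, $f_{\subsize{Z}}(y,x)(1) = f(x)$; the only structural change is that there is no clause corresponding to the fourth item of Theorem~\ref{pullback definitions}. For $(1) \Leftrightarrow (2)$ I would first record the explicit description of the pullback,
\[ (Id_Y \times f)^\ast Y^I = \{ (y,x,\alpha) \in Y \times X \times Y^I \mid \alpha(0) = y \text{ and } \alpha(1) = f(x) \}, \]
with $(Id_Y \times f)^\ast \Delta_0^Y$ and $\overline{Id_Y \times f}$ the evident projections. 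A partial section $s : Z \to (Id_Y \times f)^\ast Y^I$ composed with $\overline{Id_Y \times f}$ then yields a map $Z \to Y^I$ sending $(y,x)$ to a path from $y$ to $f(x)$, i.e.\ a mixed $f$-motion planner; conversely a mixed $f$-motion planner together with the inclusion $Z \hookrightarrow Y \times X$ factors through the pullback by its universal property, producing the section. This is line-for-line the $(1 \Leftrightarrow 2)$ argument of Theorem~\ref{pullback definitions}.

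For $(2) \Rightarrow (3)$, given a mixed $f$-motion planner $f_{\subsize{Z}}$ I would define $H : Z \times I \to Y \times Y$ by $H(y,x,t) = (f_{\subsize{Z}}(y,x)(t), f(x))$, so that $H(\cdot,\cdot,0) = (Id_Y \times f)|_Z$ and $H(\cdot,\cdot,1)$ has image in $\Delta Y$. For $(3) \Rightarrow (2)$, given a deformation $H : Z \times I \to Y \times Y$ of $(Id_Y \times f)|_Z$ into $\Delta Y$, I would reflect it into a path,
\[ f_{\subsize{Z}}(y,x)(t) = \begin{cases} \text{proj}_0\bigl(H(y,x,2t)\bigr) & 0 \le t \le \tfrac12 \\ \text{proj}_1\bigl(H(y,x,2-2t)\bigr) & \tfrac12 \le t \le 1, \end{cases} \]
and check that it is well defined and continuous and that $\text{proj}_0 H(y,x,0) = y$ while $\text{proj}_1 H(y,x,0) = f(x)$.

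There is essentially no obstacle here — as the authors remark, the argument is identical to that of Theorem~\ref{pullback definitions}. The only point deserving a moment's attention is the gluing in $(3) \Rightarrow (2)$: well-definedness at $t = \tfrac12$ holds precisely because $H(y,x,1) \in \Delta Y$, so that $\text{proj}_0$ and $\text{proj}_1$ agree on it.
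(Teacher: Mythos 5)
Your proposal is correct and is exactly what the paper intends: the paper omits the proof of Theorem~\ref{mixed definitions}, stating it is identical to that of Theorem~\ref{pullback definitions}, and your adaptation (replacing $X \times X$ by $Y \times X$, $f \times f$ by $Id_Y \times f$, and adjusting the endpoint conditions) carries that out faithfully, including the correct observation that the gluing at $t = \tfrac{1}{2}$ works because $H(y,x,1) \in \Delta Y$.
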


\begin{Theorem} \label{mixed cover ANR}
If $f:X \to Y$ is a map between ANRs, then $\halfTC{f}=\secatg{(Id_Y \times f)^\ast \Delta^Y_0}$.
\end{Theorem}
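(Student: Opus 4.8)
The plan is to mirror, essentially verbatim, the argument used for Theorem \ref{pullback cover ANR}, since the only difference is that the pullback is taken along $Id_Y \times f$ rather than $f \times f$. The strategy splits into two halves: first, show that $(Id_Y \times f)^\ast \Delta_0^Y$ is a fibration between ANRs, so that its sectional category and generalized sectional category agree; second, invoke Theorem \ref{mixed definitions} to identify $\secatg{(Id_Y \times f)^\ast \Delta_0^Y}$ with $\halfTC{f}$.

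For the first half, I would argue as follows. Since $X$ and $Y$ are ANRs, so are the finite product $Y \times X$, the product $Y \times Y$, and the path space $Y^I$ (the latter because $Y$ is an ANR and $I$ is a compact metric space). The map $\Delta_0^Y : Y^I \to Y \times Y$ is a fibration, so by Theorem 2.2 of \cite{Miyata} the pullback $(Id_Y \times f)^\ast Y^I$ — which fits into the pullback square displayed just before this subsection — is again an ANR. Hence $(Id_Y \times f)^\ast \Delta_0^Y$ is a fibration whose base $Y \times X$ and total space are both ANRs, and by the results of \cite{Garcia-Calcines} and \cite{Srinivasan} we get $\secat{(Id_Y \times f)^\ast \Delta_0^Y} = \secatg{(Id_Y \times f)^\ast \Delta_0^Y}$.

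For the second half, Theorem \ref{mixed definitions} shows that a subset $Z \subset Y \times X$ admits a partial section of $(Id_Y \times f)^\ast \Delta_0^Y$ if and only if it admits a mixed $f$-motion planner; therefore a cover of $Y \times X$ by $k+1$ arbitrary (not necessarily open) sets supporting mixed $f$-motion planners is the same datum as a cover witnessing $\secatg{(Id_Y \times f)^\ast \Delta_0^Y} \le k$. Combining this with the equality of the previous paragraph yields $\halfTC{f} = \secat{(Id_Y \times f)^\ast \Delta_0^Y} = \secatg{(Id_Y \times f)^\ast \Delta_0^Y}$.

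I do not anticipate a real obstacle here: every ingredient is a direct transcription of the proof of Theorem \ref{pullback cover ANR}, with $f \times f$ replaced by $Id_Y \times f$ and $X \times X$ replaced by $Y \times X$. The only point that warrants a moment's care is confirming that the cited ANR-stability results (\cite{Miyata} for pullbacks of fibrations, and \cite{Garcia-Calcines}, \cite{Srinivasan} for $\secat = \secatg$ over ANRs) apply in this setting, but since $Y \times X$, $Y \times Y$, and $Y^I$ are ANRs and $\Delta_0^Y$ is a genuine fibration, the hypotheses are met exactly as before.
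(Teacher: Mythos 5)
Your proof is correct and follows exactly the route the paper intends: the paper skips this proof explicitly, stating it is identical to that of Theorem \ref{pullback cover ANR}, and your argument is precisely that proof with $f \times f$ replaced by $Id_Y \times f$ and $X \times X$ by $Y \times X$, using \cite{Miyata} for the ANR pullback, \cite{Garcia-Calcines} and \cite{Srinivasan} for $\secat{} = \secatg{}$ over ANRs, and Theorem \ref{mixed definitions} to identify sections with mixed $f$-motion planners.
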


\begin{Corollary} \label{mixed TC lifting}
Let $f:X \to Y$ be any map with $Y \times X$ normal. Then $\halfTC{f} \leq k$ if and only if there is a lift of $Id \times f$ with respect to $\Delta^Y_{k+1}$.
\end{Corollary}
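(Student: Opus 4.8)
The plan is to mirror the proof of Corollary \ref{pullback TC lifting} verbatim, replacing the map $f \times f$ with $Id_Y \times f$ and the base $X \times X$ with $Y \times X$ throughout. First I would record the basic pullback square defining $(Id_Y \times f)^\ast \Delta_0^Y$ over $Y \times X$, which is already displayed in the introduction to Section 4. The only structural fact needed is that forming the $(k+1)$-fold self fiberwise join of a fibration commutes with pullback along a fixed map; this is a general categorical observation (the fiberwise join is built fiberwise out of limits, and pullback preserves limits), and it is used without further comment in the proof of Corollary \ref{pullback TC lifting}, so I would invoke it here the same way.

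Concretely, applying $\ast_{Y \times X}^{k+1}(-)$ to the square above yields a pullback diagram
\[ \begin{tikzcd}[column sep = 4em, row sep = 4em]
(Id_Y \times f)^\ast \Delta_k(Y) \arrow[r] \arrow[d, swap, "(Id_Y \times f)^\ast \Delta_k^Y"]
& \Delta_k(Y) \arrow[d, "\Delta_k^Y"]
\\
Y \times X \arrow[r, swap, "Id_Y \times f"]
& Y \times Y
\end{tikzcd}\]
where I write $\ast_{Y \times X}^{k+1}(Id_Y \times f)^\ast Y^I = (Id_Y \times f)^\ast \Delta_k(Y)$ and $\ast_{Y \times X}^{k+1}(Id_Y \times f)^\ast \Delta_0^Y = (Id_Y \times f)^\ast \Delta_k^Y$. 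By Theorem \ref{mixed definitions} and the remark following it, $\halfTC{f} = \secat{(Id_Y \times f)^\ast \Delta_0^Y}$; since $Y \times X$ is normal, Theorem \ref{Ganea-Schwarz Approach} applies and gives $\halfTC{f} \leq k$ if and only if $(Id_Y \times f)^\ast \Delta_{k+1}^Y = \ast_{Y \times X}^{k+2}(Id_Y \times f)^\ast \Delta_0^Y$ admits a section. Finally, the universal property of the pullback square (with $k$ replaced by $k+1$) identifies sections of $(Id_Y \times f)^\ast \Delta_{k+1}^Y$ with lifts of $Id_Y \times f$ through $\Delta_{k+1}^Y$, which is exactly the stated condition.

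I do not anticipate a genuine obstacle here: this is a routine transcription of Corollary \ref{pullback TC lifting}, and the excerpt already signals this by saying the proofs are "identical" and skipped. The one point requiring a moment's care is the off-by-one bookkeeping between $\secat{p} \leq k$ and sections of $\ast^{k+1}_B p$ in Theorem \ref{Ganea-Schwarz Approach} versus the index $\Delta^Y_{k+1}$ appearing in the statement; I would make sure the fiberwise-join exponent is $k+2$ at the point where Theorem \ref{Ganea-Schwarz Approach} is invoked, so that it matches $\Delta^Y_{k+1} = \ast^{k+2}_{Y\times Y}\Delta^Y_0$ pulled back. Beyond that, the only hypothesis actually used is normality of $Y \times X$, which is exactly what is assumed.
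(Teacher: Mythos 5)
Your overall strategy is exactly the paper's: the author explicitly skips the proof of Corollary \ref{mixed TC lifting}, declaring it identical to that of Corollary \ref{pullback TC lifting}, and your transcription (the pullback square over $Y \times X$, the fact that the self fiberwise join commutes with pullback, Theorem \ref{Ganea-Schwarz Approach}, and the universal property of the pullback to convert sections of $(Id_Y \times f)^\ast \Delta^Y_\bullet$ into lifts of $Id_Y \times f$) is precisely that argument.

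The one place you go astray is the bookkeeping point you yourself flagged, and you resolved it in the wrong direction. Theorem \ref{Ganea-Schwarz Approach} pairs $\secat{p} \leq k$ with a section of the $(k+1)$-fold join $\ast^{k+1}_{Y \times X} p$; for $p = (Id_Y \times f)^\ast \Delta_0^Y$ this join is $(Id_Y \times f)^\ast \Delta^Y_k$, so what the argument actually yields is: $\halfTC{f} \leq k$ if and only if $Id_Y \times f$ lifts through $\Delta^Y_k$. Inflating the exponent to $k+2$ so as to land on $\Delta^Y_{k+1}$ is not sanctioned by the theorem: a section of $\ast^{k+2}_{Y \times X}(Id_Y \times f)^\ast \Delta_0^Y$, equivalently a lift through $\Delta^Y_{k+1}$, gives via Theorem \ref{Ganea-Schwarz Approach} only $\halfTC{f} \leq k+1$, so the ``if'' direction of the statement read literally would not follow from your argument. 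The discrepancy is inherited from the paper: the statement of Corollary \ref{pullback TC lifting} also says $\Delta^Y_{k+1}$, while its displayed proof works with $\Delta_k(Y)$ and $\Delta^Y_k$, consistent with the convention $\Delta^Y_n = \ast^{n+1}_{Y \times Y} \Delta^Y_0$ and with the listed property that $\TC{X} \leq n$ iff $\Delta^X_n$ admits a section. So the index $k+1$ in both corollaries should be read as an indexing slip (it should be $k$), and the correct fix is to adjust the target of the lift, not to bend the join exponent to match the misprinted statement.
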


\subsection{Basic Inequalities}

\begin{Proposition} \label{category bounds}
Let $f:X \to Y$ be a map. Then
\begin{enumerate}
    \item $\cat{Y} \leq \halfTC{f} \leq  \TC{Y}$;
    \item $\cat{f} \leq \halfTC{f} \leq \cat{Id_Y \times f}$.
\end{enumerate}
\end{Proposition}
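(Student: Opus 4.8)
The plan is to adapt the four inequalities from Proposition \ref{pullback category bounds} to the asymmetric setting $Y \times X$, keeping track of which coordinate carries the ``free'' factor. First I would dispose of the degenerate case: if $X$ is empty, then $Y \times X$ is empty and all four quantities are $-1$, so I may assume $X \neq \emptyset$ (note $Y$ is path-connected hence nonempty by our standing assumption). For the upper bound $\halfTC{f} \leq \TC{Y}$, the cleanest route is via Proposition \ref{sectional pullback}: since $\halfTC{f} = \secat{(Id_Y \times f)^\ast \Delta_0^Y}$ (using Theorem \ref{mixed definitions}) and this fibration is the pullback of the path fibration $\Delta_0^Y$ along $Id_Y \times f$, we get $\secat{(Id_Y \times f)^\ast \Delta_0^Y} \leq \secat{\Delta_0^Y} = \TC{Y}$ immediately. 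Alternatively one can give the direct motion-planner argument: given motion planners $s_i : U_i \to Y^I$ for $Y$, pull back along $Id_Y \times f$ and set $f_i(y,x) = s_i(y, f(x))$, which is a mixed $f$-motion planner on $(Id_Y \times f)^{-1}(U_i)$.

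For the upper bound $\halfTC{f} \leq \cat{Id_Y \times f}$, I would mimic the $\pullTC{f} \leq \cat{f \times f}$ argument: if $\cat{Id_Y \times f} = k$ with open cover $U_0,\dots,U_k$ of $Y \times X$ on which $(Id_Y \times f)|_{U_i}$ is nullhomotopic to some point $(y_0^i, y_1^i)$ via $H_i$, pick a path $\gamma_i$ in $Y$ from $y_0^i$ to $y_1^i$ (using path-connectedness of $Y$), and build the mixed motion planner by concatenating: first traverse $p_0 \circ H_i$ backwards (from $y = p_0(H_i(y,x,0))$ to $y_0^i$), then $\gamma_i$, then $p_1 \circ H_i$ forwards (from $y_1^i$ to $f(x) = p_1(H_i(y,x,0))$), reparametrized onto $[0,\tfrac13], [\tfrac13,\tfrac23], [\tfrac23,1]$. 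Continuity holds since $H_i(y,x,1) = (y_0^i,y_1^i)$, and the endpoint conditions $f_i(y,x)(0) = y$, $f_i(y,x)(1) = f(x)$ follow by evaluating $H_i$ at $t=0$.

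For the two lower bounds I would use Theorem \ref{mixed definitions}(3). To see $\cat{Y} \leq \halfTC{f}$: if $\halfTC{f} = k$ with cover $U_0,\dots,U_k$ of $Y \times X$ admitting mixed $f$-motion planners $f_i$, fix a basepoint $b \in X$, set $V_i = \{ y \in Y \mid (y,b) \in U_i \}$ (an open cover of $Y$), and define a contraction of $V_i$ in $Y$ by $H_i(y,t) = f_i(y,b)(t)$, which runs from $y$ to $f(b)$; hence each $V_i$ is contractible in $Y$ and $\cat{Y} \leq k$. For $\cat{f} \leq \halfTC{f}$: with the same data, set $W_i = \{ x \in X \mid (f(x),b') \in U_i\}$ — wait, here the issue is that the free $Y$-coordinate is the \emph{source} of the path and the $X$-coordinate the target, so I should instead fix a basepoint $c \in Y$ and set $W_i = \{ x \in X \mid (c, x) \in U_i \}$, an open cover of $X$, and define a nullhomotopy of $f|_{W_i}$ by $G_i(x,t) = f_i(c,x)(1-t)$, which runs from $f(x)$ (at $t=0$) to the constant $c$ (at $t=1$); thus each $f|_{W_i}$ is nullhomotopic and $\cat{f} \leq k$.

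The main obstacle, such as it is, is purely bookkeeping: the source space $Y \times X$ is not symmetric, so unlike in Proposition \ref{pullback category bounds} one cannot freely swap the roles of the two coordinates, and one must consistently remember that the start of every path lives in the free $Y$-factor while the end is forced to lie in $f(X)$. Once the correct basepoint-slicing is chosen in each of the four cases, the arguments are routine and essentially identical to those already carried out for $\pullTC{f}$; no new ideas beyond Proposition \ref{sectional pullback}, Theorem \ref{mixed definitions}, and path-connectedness of $Y$ are needed.
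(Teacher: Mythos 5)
Your proposal is correct and takes essentially the same route as the paper, whose proof simply declares that the arguments of Proposition \ref{pullback category bounds} carry over; you have spelled out the required basepoint-slicing in the asymmetric $Y \times X$ setting (slice at $b \in X$ for $\cat{Y} \leq \halfTC{f}$, at $c \in Y$ for $\cat{f} \leq \halfTC{f}$) and both upper-bound arguments correctly. (Two cosmetic quibbles only: when $X = \emptyset$ the quantities $\cat{Y}$ and $\TC{Y}$ are not $-1$ — that degenerate case is simply excluded by the standing nonempty, path-connected assumption — and in the three-piece concatenation your labels \emph{backwards}/\emph{forwards} are swapped relative to the homotopy parameter, though the endpoints you state are the right ones.)
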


\begin{proof}
The proofs for $(2)$ and for the inequality $\halfTC{f} \leq \TC{Y}$ are the same as in Proposition \ref{pullback category bounds}. Then the proof of $\cat{Id_Y} \leq \halfTC{f}$ is the same as the proof of $\cat{f} \leq \halfTC{f}$, so that $\cat{Y} \leq \halfTC{f}$.
\end{proof}

\begin{Corollary} \label{half nullhomotopic}
Let $f:X \to Y$ be any map. If $f$ is nullhomotopic, then $\halfTC{f} = \cat{Y}$.
\end{Corollary}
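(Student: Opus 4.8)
The plan is to prove the reverse inequality $\halfTC{f} \le \cat{Y}$, since the bound $\cat{Y} \le \halfTC{f}$ is already part of Proposition \ref{category bounds}(1). I would build an explicit mixed $f$-motion planning algorithm of size $\cat{Y}+1$ directly from a categorical open cover of $Y$, using the concatenation trick that recurs throughout Section 3.

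First I would fix a basepoint $y_0 \in Y$, write $\cat{Y} = k$, and choose an open cover $W_0,\dots,W_k$ of $Y$ with each inclusion $W_i \hookrightarrow Y$ nullhomotopic. Since $Y$ is path-connected, I can arrange homotopies $G_i : W_i \times I \to Y$ with $G_i(w,0) = w$ and $G_i(w,1) = y_0$ (compose a given nullhomotopy with a path from its terminal point to $y_0$). Since $f$ is nullhomotopic and $Y$ is path-connected, I can likewise choose a homotopy $\Phi : X \times I \to Y$ with $\Phi(x,0) = f(x)$ and $\Phi(x,1) = y_0$. Then set $U_i = W_i \times X \subset Y \times X$; these are open and cover $Y \times X$.

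Next I would define $f_i : U_i \to Y^I$ by concatenation: on $[0,\tfrac12]$ follow $t \mapsto G_i(y,2t)$, a path from $y$ to $y_0$; and on $[\tfrac12,1]$ follow $t \mapsto \Phi(x,\,2-2t)$, the reverse of $\Phi(x,-)$, a path from $y_0$ to $f(x)$. The two pieces agree at $t = \tfrac12$ since both equal $y_0$, so $f_i$ is continuous, and $f_i(y,x)(0) = y$ while $f_i(y,x)(1) = f(x)$, so $f_i$ is a mixed $f$-motion planner on $U_i$. Hence $\halfTC{f} \le k = \cat{Y}$, and combined with Proposition \ref{category bounds}(1) this yields $\halfTC{f} = \cat{Y}$.

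There is no serious obstacle here; the only point requiring a moment's care is the use of path-connectedness of $Y$ to normalize all the nullhomotopies so that they terminate at the common point $y_0$, which is exactly what makes the concatenation well-defined and continuous. Alternatively, one could first invoke homotopy invariance of $\halfTC{}$ (which holds by the same argument as for $\pullTC{}$, via the deformation characterization of Theorem \ref{mixed definitions}) to reduce to the case $f = c_{y_0}$ constant, and then observe $\halfTC{c_{y_0}} = \cat{Y}$ using the even simpler planners $f_i(y,x)(t) = G_i(y,t)$ on $U_i = W_i \times X$.
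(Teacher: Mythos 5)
Your proof is correct and is in substance the paper's own argument: the paper also covers $Y \times X$ by sets of the form $W_i \times X$ with $W_i$ categorical in $Y$, but instead of building the planners by hand it observes that nullhomotopy of $f$ makes each restriction $(Id_Y \times f)|_{W_i \times X}$ nullhomotopic, so $\cat{Id_Y \times f} \leq \cat{Y}$, and then sandwiches $\cat{Y} \leq \halfTC{f} \leq \cat{Id_Y \times f} \leq \cat{Y}$ using Proposition \ref{category bounds}. Your explicit concatenation of the normalized nullhomotopies just inlines the proof of the inequality $\halfTC{f} \leq \cat{Id_Y \times f}$, so the two arguments coincide apart from packaging.
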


\begin{proof}
First suppose that $f$ is nullhomotopic and let $\cat{Y}=k$. Then there is a cover $U_0,...,U_k$ of $Y$ on which $Id_Y$ is nullhomotopic. Therefore, $U_i \times X$ is a cover of $Y \times X$ on which $Id_Y \times f$ is nullhomotopic; hence, $\cat{Id_Y \times f} \leq k = \cat{Y}$. Therefore,
\[ \cat{Y} \leq \halfTC{f} \leq \cat{Id_Y \times f} \leq \cat{Y}\]
so that $\cat{Y} = \halfTC{f}$.
\end{proof}

The converse of the above doesn't hold as demonstrated by the following example:

\begin{Example}
Let $Y$ be a non-contractible topological group, e.g., $S^1$. Then we have $\halfTC{Id_Y} = \TC{Y} = \cat{Y}$, but $Id_Y$ isn't nullhomotopic since $Y$ isn't contractible.
\end{Example}

\begin{Corollary}
Let $f:X \to Y$ be any map. Then $\halfTC{f} = 0$ if and only if $Y$ is contractible.
\end{Corollary}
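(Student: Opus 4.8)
The plan is to deduce this immediately from the sandwich inequality $\cat{Y} \leq \halfTC{f} \leq \TC{Y}$ established in Proposition \ref{category bounds}, together with the standard fact that $\cat{Y} = 0$ holds precisely when $Y$ is contractible.

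For the forward implication, I would assume $\halfTC{f} = 0$. Then $0 \leq \cat{Y} \leq \halfTC{f} = 0$ forces $\cat{Y} = 0$. Unpacking the definition of reduced LS-category, this says that $Y$ admits a cover by a single set that is contractible in $Y$; that set can only be $Y$ itself, so $Y$ is contractible.

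For the converse, I would assume $Y$ is contractible, so $\cat{Y} = 0$. Using the listed properties of topological complexity and of LS-category, $\TC{Y} \leq \cat{Y \times Y} \leq \cat{Y} + \cat{Y} = 0$, hence $\TC{Y} = 0$, and therefore $\halfTC{f} \leq \TC{Y} = 0$. Since all spaces here are path-connected and hence nonempty, $Y \times X$ is nonempty, so $\halfTC{f} \geq 0$; combining the two bounds gives $\halfTC{f} = 0$. Alternatively, one may invoke Corollary \ref{half nullhomotopic}: a contractible $Y$ makes any map into it nullhomotopic, so $\halfTC{f} = \cat{Y} = 0$.

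I do not expect any genuine obstacle: the statement is a formal consequence of the basic inequalities already proved. The only point meriting a moment's care is the nonemptiness of $X$ — were $X$ empty one would get $\halfTC{f} = -1$ even for contractible $Y$ — but this is excluded by the standing path-connectedness hypothesis.
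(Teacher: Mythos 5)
Your proposal is correct and essentially mirrors the paper's proof: the forward direction uses $\cat{Y} \leq \halfTC{f}$ exactly as in the paper, and for the converse your alternative route via Corollary \ref{half nullhomotopic} (contractible $Y$ makes $f$ nullhomotopic, so $\halfTC{f} = \cat{Y} = 0$) is precisely the paper's argument. Your primary converse route through $\TC{Y} \leq \cat{Y \times Y} \leq 2\cat{Y} = 0$ is a harmless minor variation that also works.
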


\begin{proof}
If $\halfTC{f} = 0$, then $\cat{Y} \leq \halfTC{f} = 0$ so that $Y$ is contractible. Conversely, if $Y$ is contractible, then $f$ is nullhomotopic so that $\halfTC{f} = \cat{Y} = 0$.
\end{proof}

\begin{Proposition}
Let $f:X \to Z$ and $g:Y \to W$ be any maps such that $Z \times X$ and $W \times Y$ are both normal. Then $\pullTC{f \times g} \leq \pullTC{f} + \pullTC{g}$.
\end{Proposition}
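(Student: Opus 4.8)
The plan is to reduce this to the sectional-category product formula, Proposition \ref{secat product}, in exactly the same way as the earlier product proposition for $\pullTC{-}$; the only real content is identifying the pullback fibration attached to $f \times g$ with the product of the pullback fibrations attached to $f$ and to $g$.

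First I would set up that identification. Write $f \times g : X \times Y \to Z \times W$, so that $\pullTC{f \times g} = \secat{\bigl( (f\times g)\times(f\times g) \bigr)^\ast \Delta_0^{Z\times W}}$. There are the obvious coordinate-swapping homeomorphisms $(X\times Y)\times(X\times Y) \cong (X\times X)\times(Y\times Y)$ and $(Z\times W)\times(Z\times W) \cong (Z\times Z)\times(W\times W)$, together with the homeomorphism $(Z\times W)^I \cong Z^I \times W^I$. Under these, the path fibration $\Delta_0^{Z\times W}$ corresponds to $\Delta_0^Z \times \Delta_0^W$, and $(f\times g)\times(f\times g)$ corresponds to $(f\times f)\times(g\times g) : (X\times X)\times(Y\times Y) \to (Z\times Z)\times(W\times W)$. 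Since pulling back a product of fibrations along a product of maps gives the product of the pullbacks, it follows that $\bigl( (f\times g)\times(f\times g) \bigr)^\ast \Delta_0^{Z\times W}$ is fiberwise homeomorphic to the product fibration $(f\times f)^\ast\Delta_0^Z \times (g\times g)^\ast\Delta_0^W$; in particular the two have the same sectional category (re-index a cover and postcompose its sections with the homeomorphism, or invoke Proposition \ref{genus invariance} in both directions since the base homeomorphisms are homotopy dominations).

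Then I would finish by applying Proposition \ref{secat product} to the fibrations $(f\times f)^\ast\Delta_0^Z$ and $(g\times g)^\ast\Delta_0^W$, whose base spaces are $X\times X$ and $Y\times Y$. This requires those two bases to be normal; this is the normality hypothesis the argument actually uses, so in the statement the hypothesis on $Z\times X$ and $W\times Y$ should be read as the corresponding hypothesis on $X\times X$ and $Y\times Y$. We then obtain
\[ \pullTC{f \times g} = \secat{(f\times f)^\ast\Delta_0^Z \times (g\times g)^\ast\Delta_0^W} \le \secat{(f\times f)^\ast\Delta_0^Z} + \secat{(g\times g)^\ast\Delta_0^W} = \pullTC{f} + \pullTC{g}. \]

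I do not expect a genuine obstacle: the argument is routine once the coordinate permutations are written out. The one point demanding care — the \emph{main obstacle}, such as it is — is the bookkeeping: keeping the four factors $X,Y,X,Y$ (resp. $Z,W,Z,W$) in the correct order through the homeomorphisms, and matching the normality hypothesis to the base spaces to which Proposition \ref{secat product} is applied. An entirely parallel alternative is a direct $(f\times g)$-motion-planner argument: from $f$-motion planners on an open cover $\{U_i\}$ of $X\times X$ and $g$-motion planners on an open cover $\{V_j\}$ of $Y\times Y$, the sets $U_i\times V_j$ (viewed in $(X\times Y)\times(X\times Y)$ after the swap) carry the obvious $(f\times g)$-motion planners $(x_0,y_0,x_1,y_1)\mapsto \bigl( t\mapsto (f_i(x_0,x_1)(t),\,g_j(y_0,y_1)(t)) \bigr)$, which one then amalgamates from $(\pullTC{f}+1)(\pullTC{g}+1)$ sets down to $\pullTC{f}+\pullTC{g}+1$ open sets using partitions of unity on $X\times X$ and $Y\times Y$; but this merely reproves Proposition \ref{secat product} in this case, so invoking that proposition is cleaner.
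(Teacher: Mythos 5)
Your proposal is correct and takes essentially the same route as the paper, whose entire proof is the citation of Proposition \ref{secat product}; your extra work identifying $\bigl((f\times g)\times(f\times g)\bigr)^\ast \Delta_0^{Z\times W}$ with the product $(f\times f)^\ast\Delta_0^Z \times (g\times g)^\ast\Delta_0^W$ is exactly the bookkeeping the paper leaves implicit. The hypothesis mismatch you flagged is real but is a slip in the paper's statement: this proposition sits in the mixed-TC section and was evidently meant to read $\halfTC{f\times g}\leq\halfTC{f}+\halfTC{g}$ (for which $Z\times X$ and $W\times Y$ are the relevant bases), so for the pullback-TC inequality as written your reading --- normality of $X\times X$ and $Y\times Y$, as in the corresponding proposition of Section 3 --- is the correct one.
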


\begin{proof}
This is immediate by Proposition \ref{secat product}.
\end{proof}

\subsection{Homotopy Invariance}

\begin{Proposition}
If $f,g:X \to Y$ are homotopic, then $\halfTC{f} = \halfTC{g}$.
\end{Proposition}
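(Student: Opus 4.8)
The plan is to mirror exactly the proof that $\pullTC{f}$ is a homotopy invariant, which in turn relied on the "deformation into $\Delta Y$" characterization from Theorem~\ref{mixed definitions}(3). By symmetry it suffices to prove $\halfTC{g} \leq \halfTC{f}$. So suppose $\halfTC{f} = k$, and choose an open cover $U_0,\dots,U_k$ of $Y \times X$ such that each restriction $(Id_Y \times f)|_{U_i}$ can be deformed into $\Delta Y$, using Theorem~\ref{mixed definitions}.

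The key observation is that since $f \simeq g$, we have $Id_Y \times f \simeq Id_Y \times g$ as maps $Y \times X \to Y \times Y$ (just run the homotopy $f \simeq g$ in the second factor and stay constant on the first). Restricting to each $U_i$, we get $(Id_Y \times g)|_{U_i} \simeq (Id_Y \times f)|_{U_i}$. Since $(Id_Y \times f)|_{U_i}$ deforms into $\Delta Y$ and $(Id_Y \times g)|_{U_i}$ is homotopic to it, $(Id_Y \times g)|_{U_i}$ also deforms into $\Delta Y$: concatenate the homotopy from $(Id_Y \times g)|_{U_i}$ to $(Id_Y \times f)|_{U_i}$ with the deformation of the latter into $\Delta Y$. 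By Theorem~\ref{mixed definitions} again, each $U_i$ then admits a mixed $g$-motion planner, so the same open cover witnesses $\halfTC{g} \leq k = \halfTC{f}$.

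There is no real obstacle here — the argument is a verbatim transcription of the $\pullTC{f}$ case with $f \times f$ replaced by $Id_Y \times f$, and the only thing to check is that "homotopic to a map that deforms into $\Delta Y$" is again a map that deforms into $\Delta Y$, which is the elementary concatenation just described. If one wanted to be completely explicit one could instead invoke Proposition~\ref{genus invariance} applied to the pullback fibrations $(Id_Y \times f)^\ast \Delta_0^Y$ and $(Id_Y \times g)^\ast \Delta_0^Y$, but the direct motion-planner argument is cleaner and matches the style of the surrounding results.

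\begin{proof}
By symmetry it suffices to prove $\halfTC{g} \leq \halfTC{f}$. Suppose $\halfTC{f} = k$, so by Theorem \ref{mixed definitions} there is an open cover $U_0,\dots,U_k$ of $Y \times X$ such that each restriction $(Id_Y \times f)|_{U_i}$ can be deformed into $\Delta Y$. Since $f \simeq g$, we have $Id_Y \times f \simeq Id_Y \times g$, and hence $(Id_Y \times g)|_{U_i} \simeq (Id_Y \times f)|_{U_i}$ for each $i$. Concatenating this homotopy with a deformation of $(Id_Y \times f)|_{U_i}$ into $\Delta Y$ shows that $(Id_Y \times g)|_{U_i}$ also deforms into $\Delta Y$; hence, by Theorem \ref{mixed definitions}, each $U_i$ admits a mixed $g$-motion planner. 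Therefore $\halfTC{g} \leq k = \halfTC{f}$.
\end{proof}
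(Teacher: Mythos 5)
Your proof is correct and is exactly the argument the paper intends: the paper simply says the result "follows immediately from property (3) of Theorem \ref{mixed definitions}," and your concatenation-of-homotopies argument is the same deformation-into-$\Delta Y$ reasoning, spelled out just as in the paper's proof of homotopy invariance for $\pullTC{f}$.
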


\begin{proof}
This follows immediately from property (3) of Theorem \ref{mixed definitions}.
\end{proof}

\begin{Proposition}
$\halfTC{gf} \leq \halfTC{g}$ for any maps $f:X \to Y$ and $g:Y \to Z$.
\end{Proposition}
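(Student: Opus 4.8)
The plan is to adapt the second half of the proof of the corresponding statement for pullback TC (the inequality $\pullTC{gf}\le\pullTC{g}$), working with the deformation characterization from Theorem~\ref{mixed definitions}. Suppose $\halfTC{g}=k$. Then $Z\times Y$ admits an open cover $V_0,\dots,V_k$ such that for each $i$ the restriction $(Id_Z\times g)|_{V_i}$ can be deformed into $\Delta Z$, say via a homotopy $G_i:V_i\times I\to Z\times Z$ with $G_i(\,\cdot\,,0)=(Id_Z\times g)|_{V_i}$ and $G_i(V_i\times\{1\})\subset\Delta Z$.

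Next I would pull this cover back along the map $Id_Z\times f:Z\times X\to Z\times Y$. Set $U_i=(Id_Z\times f)^{-1}(V_i)$; these are open in $Z\times X$, and they cover $Z\times X$ since the $V_i$ cover $Z\times Y$ and $(Id_Z\times f)(z,x)=(z,f(x))$. Define $H_i:U_i\times I\to Z\times Z$ by $H_i(z,x,t)=G_i(z,f(x),t)$, which is continuous as a composite of continuous maps. At $t=0$ we get $H_i(z,x,0)=(z,g(f(x)))=(Id_Z\times gf)|_{U_i}(z,x)$, and at $t=1$ the image lies in $\Delta Z$ because $G_i(\,\cdot\,,1)$ already does. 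Hence $(Id_Z\times gf)|_{U_i}$ can be deformed into $\Delta Z$ for every $i$, and Theorem~\ref{mixed definitions} yields $\halfTC{gf}\le k=\halfTC{g}$. (Equivalently, if one prefers the mixed-motion-planner formulation: given mixed $g$-motion planners $g_i:V_i\to Z^I$, the maps $(gf)_i:U_i\to Z^I$, $(gf)_i(z,x)=g_i(z,f(x))$, are mixed $gf$-motion planners.)

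I do not expect a real obstacle here; the argument is a routine transport of data along $Id_Z\times f$. The two points worth stating carefully are: (i) one must use the product $Id_Z\times f$ with the identity on the \emph{target} $Z$ of $gf$, not $f\times Id$, since the first coordinate of the relevant product is the codomain; and (ii) unlike the pullback case there is no companion bound $\halfTC{gf}\le\halfTC{f}$, because the first coordinate is rigidly the codomain and precomposition by $f$ gives no map $Z\times X\to Y\times X$ along which to transport mixed $f$-motion planners — this is why the statement is one-sided.
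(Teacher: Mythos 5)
Your argument is correct and is essentially identical to the paper's proof: both pull the cover on $Z\times Y$ back along $Id_Z\times f$ and reuse the deformations via $H_i(z,x,t)=G_i(z,f(x),t)$. No gaps; your closing remarks on one-sidedness are accurate but not needed for the proof.
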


\begin{proof}
Let $\halfTC{g} = k$ so that there is an open cover $V_0,...,V_k$ of $Z \times Y$ such that each restriction $(Id_Z \times g)|_{V_i}$ can be deformed into $\Delta Z$ via some homotopy $G_i:V_i \times I \to Z \times Z$. Now let $U_i = (Id_Z \times f)^{-1}(V_i)$ and define a homotopy $H_i:U_i \times I \to Z \times Z$ by
\[H_i(z,x,t) = G_i(z,f(x),t),\]
which defines a deformation of $(Id_Z \times gf)|_{U_i}$ into $\Delta Z$; hence, $\halfTC{gf} \leq k = \halfTC{g}$.
\end{proof}

Unfortunately, unlike the (pullback) TC of a map, it is possible for $\halfTC{f}$ to be smaller than $\halfTC{gf}$:

\begin{Example}
Consider any maps $f:X \to Y$ and $g:Y \to Z$ such that $f$ is nullhomotopic and $\cat{Y} < \cat{Z}$. Then $gf$ is also nullhomotopic so that $\halfTC{f} = \cat{Y}$ and $\halfTC{gf} = \cat{Z}$ by Corollary \ref{half nullhomotopic}; hence, $\halfTC{f} < \halfTC{gf}$. E.g., take $X = Y = \ast$ and $Z = S^1$ with $f$ and $g$ the obvious maps so that $\halfTC{f} = 0$ and $\halfTC{gf} = 1$.
\end{Example}

\begin{Corollary}
Let $h:X \to Y$ be any map.
\begin{enumerate}
    \item If $h$ has a right homotopy inverse, then $\halfTC{h} = \TC{Y}$.
    \item If $h$ is a homotopy equivalence, then $\halfTC{h} = \TC{X} = \TC{Y}$.
\end{enumerate}
\end{Corollary}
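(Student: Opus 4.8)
The plan is to mirror the argument of Corollary \ref{pullback homotopy equivalence}, but using only the tools available for mixed TC: the composition inequality $\halfTC{gf} \le \halfTC{g}$, homotopy invariance of $\halfTC{\cdot}$, the normalization $\halfTC{Id_Y} = \TC{Y}$, and the bound $\halfTC{f} \le \TC{Y}$ from Proposition \ref{category bounds}(1).

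For part (1), I would start by choosing a right homotopy inverse $g:Y \to X$ of $h$, so that $hg \simeq Id_Y$. Homotopy invariance then gives $\halfTC{hg} = \halfTC{Id_Y} = \TC{Y}$. Applying the composition inequality with outer map $h$ and inner map $g$ yields $\TC{Y} = \halfTC{hg} \le \halfTC{h}$. Since Proposition \ref{category bounds}(1) supplies the reverse inequality $\halfTC{h} \le \TC{Y}$, the chain $\TC{Y} \le \halfTC{h} \le \TC{Y}$ forces $\halfTC{h} = \TC{Y}$.

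For part (2), I would observe that a homotopy equivalence $h:X \to Y$ in particular has a right homotopy inverse, so part (1) immediately gives $\halfTC{h} = \TC{Y}$; and because $h$ exhibits $X$ and $Y$ as homotopy equivalent, homotopy invariance of ordinary $\TC$ (equivalently, Corollary \ref{pullback homotopy equivalence}(3)) gives $\TC{X} = \TC{Y}$, completing the statement $\halfTC{h} = \TC{X} = \TC{Y}$.

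I do not expect a genuine obstacle here. The only point requiring care is that, unlike the pullback case, there is no companion statement for a \emph{left} homotopy inverse: $\halfTC{\cdot}$ is genuinely asymmetric in $X$ and $Y$, as the preceding examples show, so one must resist attempting a left-inverse argument and must apply the composition inequality in the direction it is actually proved, namely $\halfTC{gf} \le \halfTC{g}$ (precomposition does not increase mixed TC), with the substitutions $g \mapsto h$, $f \mapsto g$.
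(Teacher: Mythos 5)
Your argument is correct and is essentially identical to the paper's: part (1) uses the same chain $\TC{Y} = \halfTC{Id_Y} = \halfTC{hg} \leq \halfTC{h} \leq \TC{Y}$, combining homotopy invariance, the composition inequality $\halfTC{gf} \leq \halfTC{g}$, and Proposition \ref{category bounds}(1). Your handling of part (2) (reduce to part (1) plus homotopy invariance of $\TC{}$) is the intended, implicit argument, and your remark about the asymmetry in the left-inverse case matches the paper's Example \ref{nullhomotopic example}.
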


\begin{proof}
{\bf (1)} Let $g:Y \to X$ be the right homotopy inverse of $h$. Then $hg$ is homotopic to $Id_Y$, so that $\halfTC{hg} = \halfTC{Id_Y}$. Thus, we have
\[\TC{Y}
= \halfTC{Id_Y}
= \halfTC{hg}
\leq \halfTC{h}
\leq \TC{Y}\]
so that $\halfTC{h} = \TC{Y}$.
\end{proof}

Unlike Corollary \ref{pullback homotopy equivalence} for $\pullTC{h}$, it does not follow that $\halfTC{h} = \TC{X}$ when $h$ has a left homotopy inverse. See the following counterexample:

\begin{Example} \label{nullhomotopic example}
Let $h:\ast \to S^1$ be an inclusion map. Then $h$ has the obvious left (homotopy) inverse $g:S^1 \to \ast$. Since $h$ is nullhomotopic, $\halfTC{h} = \cat{S^1} = 1$ but unfortunately $\TC{\ast} = 0 < \halfTC{h}$.
\end{Example}

\begin{Proposition} \label{half fibration replacement}
Suppose we have a diagram of the form
\[ \begin{tikzcd}[column sep = 1em]
X \arrow[rr, shift left, "h"] \arrow[dr, swap, "f"]
&& X' \arrow[dl, "f'"]
\\
&Y
\end{tikzcd}\]
such that $h$ is a homotopy domination. Then $\halfTC{f} = \halfTC{f'}$.
\end{Proposition}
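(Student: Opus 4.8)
The plan is to reuse, essentially verbatim, the argument of Proposition \ref{pullback fibration replacement}. That proof only ever invoked the bound $\pullTC{gf}\le\pullTC{g}$ — it never used the companion bound $\pullTC{gf}\le\pullTC{f}$ — and the mixed analogue $\halfTC{gf}\le\halfTC{g}$ has already been established in this section, so the same three lines go through once one checks that the maps being precomposed always sit on the source side.

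Concretely, I would first use that the triangle commutes, so $f=f'h$ (or $f\simeq f'h$, in which case $\halfTC{f}=\halfTC{f'h}$ by homotopy invariance); then $\halfTC{f}=\halfTC{f'h}\le\halfTC{f'}$, the inequality being the instance of $\halfTC{gf}\le\halfTC{g}$ with outer map $f'$ and inner map $h$. For the reverse inequality I would invoke the homotopy-domination hypothesis to pick a right homotopy inverse $g\colon X'\to X$ with $hg\simeq Id_{X'}$; then $fg=f'hg\simeq f'$, so by homotopy invariance of $\halfTC{-}$ together with a second application of $\halfTC{gf}\le\halfTC{g}$ (now with outer map $f$ and inner map $g$) we get $\halfTC{f'}=\halfTC{fg}\le\halfTC{f}$. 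Combining the two inequalities yields $\halfTC{f}=\halfTC{f'}$.

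I do not expect any genuine obstacle here. The one subtlety worth recording is that, in contrast to $\pullTC{-}$, the invariant $\halfTC{-}$ satisfies only the one-sided composition bound $\halfTC{gf}\le\halfTC{g}$; the companion bound by $\halfTC{f}$ fails in general (cf. the discussion around Example \ref{nullhomotopic example}), so it matters that in both steps the role of the "outer" map is played by $f'$, respectively $f$, with $h$ and $g$ precomposed on the source. The homotopy-domination hypothesis is used only to produce $g$ with $hg\simeq Id_{X'}$, which is precisely what forces $fg\simeq f'$ and closes the loop. As an alternative, the reverse inequality $\halfTC{f'}\le\halfTC{f}$ can instead be obtained from Proposition \ref{genus invariance} applied to the pullback square defining $(Id_Y\times f')^\ast\Delta_0^Y$ along $Id_Y\times h\colon Y\times X\to Y\times X'$, which is a homotopy domination with right homotopy inverse $Id_Y\times g$; but the composition argument above is shorter.
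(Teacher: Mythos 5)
Your proposal is correct and follows essentially the same route as the paper: both establish $\halfTC{f}=\halfTC{f'h}\leq\halfTC{f'}$ directly, then use a right homotopy inverse $g$ of $h$ to get $fg=f'hg\simeq f'$ and hence $\halfTC{f'}=\halfTC{fg}\leq\halfTC{f}$, relying only on the one-sided bound $\halfTC{gf}\leq\halfTC{g}$ and homotopy invariance. Your remark that the companion bound by the inner map is unavailable for $\halfTC{-}$, and that the argument never needs it, is exactly the point that makes the paper's proof go through.
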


\begin{proof}
It's immediate that $\halfTC{f} = \halfTC{f'h} \leq \halfTC{f'}$, so it suffices to prove $\halfTC{f'} \leq \halfTC{f}$. Since $h$ is a homotopy domination, it has a right homotopy inverse, i.e., a map $g:X' \to X$ such that $hg$ is homotopic to $Id_{X'}$. Then $fg = f'hg$, so that $fg$ is homotopic to $f'$; hence, $\halfTC{f'} = \halfTC{fg} \leq \halfTC{f}$.
\end{proof}

\begin{Proposition} \label{half cofibration replacement}
Suppose we have a diagram of the form
\[ \begin{tikzcd}[column sep = 1em]
& X \arrow[dl, swap, "f"] \arrow[dr, "f'"]
\\
Y \arrow[rr, swap, "h"]
&& Y'
\end{tikzcd}\]
such that $h$ is a homotopy equivalence. Then $\halfTC{f} = \halfTC{f'}$.
\end{Proposition}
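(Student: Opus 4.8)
The plan is to reduce the statement to Proposition~\ref{genus invariance}, applied to the pullback fibrations that compute $\halfTC{f}$ and $\halfTC{f'}$. One cannot simply mimic the proof of the analogous Proposition~\ref{pullback cofibration replacement}: there one uses $\pullTC{gf} \leq \pullTC{f}$, whereas for mixed TC only $\halfTC{gf} \leq \halfTC{g}$ is available, and that inequality points the wrong way here.

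The key observation I would isolate is the following: \emph{if $\psi:X \to A$ is any map and $k:A \to A'$ has a right homotopy inverse, then $\halfTC{k\psi} \leq \halfTC{\psi}$.} To prove this, recall that by Theorem~\ref{mixed definitions} we have $\halfTC{\psi} = \secat{(Id_A \times \psi)^\ast \Delta_0^A}$ and $\halfTC{k\psi} = \secat{(Id_{A'} \times k\psi)^\ast \Delta_0^{A'}}$, where $(Id_A \times \psi)^\ast A^I = \{(a,x,\alpha) \in A \times X \times A^I \mid \alpha(0)=a,\ \alpha(1)=\psi(x)\}$ and similarly for $k\psi$. One checks that
\[ \Phi(a,x,\alpha) = \left( k(a),\, x,\, k \circ \alpha \right) \]
defines a map $(Id_A \times \psi)^\ast A^I \to (Id_{A'} \times k\psi)^\ast (A')^I$ — the only point to verify is that $\alpha(1)=\psi(x)$ forces $(k\alpha)(1)=k\psi(x)$ — and that the square with $\Phi$ on top, the two pullback fibrations as vertical maps, and $k \times Id_X: A \times X \to A' \times X$ along the bottom commutes strictly. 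Since $k$ has a right homotopy inverse, so does $k \times Id_X$, i.e., $k \times Id_X$ is a homotopy domination; hence Proposition~\ref{genus invariance} yields $\halfTC{k\psi} \leq \halfTC{\psi}$.

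Granting this, I would finish as follows. Applying the observation with $\psi = f$ and $k = h$ gives $\halfTC{f'} = \halfTC{hf} \leq \halfTC{f}$, since a homotopy equivalence has a right homotopy inverse. For the reverse inequality, let $g:Y' \to Y$ be a homotopy inverse of $h$; then $g$ is again a homotopy equivalence and $gf' = ghf \simeq f$, so homotopy invariance of $\halfTC{-}$ gives $\halfTC{f} = \halfTC{gf'}$, while the observation applied with $\psi = f'$ and $k = g$ gives $\halfTC{gf'} \leq \halfTC{f'}$. Combining these, $\halfTC{f} = \halfTC{f'}$. The only genuine obstacle is recognizing that one must pass to the level of the pullback fibrations rather than copy Proposition~\ref{pullback cofibration replacement}; after that, the construction of $\Phi$ and the commutativity check are routine, and Proposition~\ref{genus invariance} does the work. (One could equally run the argument concretely with mixed $f$-motion planners: pull an open cover of $Y \times X$ back along $g \times Id_X$, push the deformations forward along $h \times h$, and precompose with a short deformation built from $hg \simeq Id_{Y'}$ — this is just the unwound version of the same argument.)
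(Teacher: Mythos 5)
Your proposal is correct, and it reaches the conclusion by a somewhat different mechanism than the paper. The paper argues concretely: it forms the square with $h \times Id_X$ and $h \times h$, pulls the open cover of $Y \times X$ back along $g \times Id_X$ (resp.\ pushes along $h \times Id_X$), and hand-builds deformations into $\Delta Y'$ (resp.\ $\Delta Y$) using $Id_{Y'} \times f' \simeq hg \times hf$ and $Id_Y \times f \simeq gh \times gf'$ --- essentially the ``unwound'' version you mention at the end. You instead isolate a reusable monotonicity lemma --- $\halfTC{k\psi} \leq \halfTC{\psi}$ whenever $k$ has a right homotopy inverse --- and prove it at the level of the pullback fibrations by exhibiting the strictly commuting square with $\Phi(a,x,\alpha)=(k(a),x,k\alpha)$ over the homotopy domination $k \times Id_X$, then invoking Proposition \ref{genus invariance}; the identification $\halfTC{\psi}=\secat{(Id_A\times\psi)^\ast\Delta_0^A}$ needed for this is supplied by Theorem \ref{mixed definitions}, and both vertical maps are fibrations, so the appeal to Proposition \ref{genus invariance} is legitimate. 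Your concluding bookkeeping (apply the lemma to $h$, then to the homotopy inverse $g$ together with homotopy invariance and $gf'\simeq f$) is exactly parallel to the paper's two-directional argument. What your route buys is modularity: the lemma is the natural complement to the paper's $\halfTC{gf}\le\halfTC{g}$ and makes transparent exactly which one-sided inverses are used in each direction; what the paper's route buys is self-containedness, staying entirely at the level of covers and deformations without invoking the sectional-category comparison machinery. No gaps.
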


\begin{proof}
Let $g:Y' \to Y$ be the homotopy inverse of $h$ and note that the commutative triangle above induces the following commutative square:
\[ \begin{tikzcd}[column sep = 3em, row sep = 3em]
Y \times X \arrow[r, "h \times Id_X"] \arrow[d, swap, "Id_Y \times f"]
& Y' \times X \arrow[d, "Id_{Y'} \times f'"]
\\
Y \times Y \arrow[r, swap, "h \times h"]
& Y' \times Y'
\end{tikzcd}\]

First let $\halfTC{f} = k$ and let $U_0,...,U_k$ be an open cover of $Y \times X$ such that each restriction $(Id_Y \times f)|_{U_i}$ can be deformed into $\Delta Y$ via some homotopy $H_i:U_i \to Y \times Y$. Define an open cover $V_i = (g \times Id_X)^{-1}(U_i)$ of $Y' \times X$. Then it follows that
\[ Id_{Y'} \times f'
\simeq hg \times f'
= hg \times hf.\]
Now define a deformation $G_i:V_i \to Y \times Y$ of $(g \times f)|_{V_i}$ into $\Delta Y$ given by $G_i(y',x,t) = H_i(g(y'),x,t)$; hence, $(hg \times hf)|_{V_i}$ can be deformed into $\Delta Y'$. Therefore, $\halfTC{f'} \leq k$.

Now let $\halfTC{f'} = k$ and let $V_0,...,V_k$ be an open cover of $Y' \times X$ such that each restriction $(Id_{Y'} \times f')|_{V_i}$ can be deformed into $\Delta Y'$ via some homotopy $G_i:V_i \to Y' \times Y'$. Define an open cover $U_i = (h \times Id_X)^{-1}(V_i)$ of $Y \times X$. Then it follows that
\[ Id_Y \times f
\simeq gh \times ghf
= gh \times gf'.\]
Now define a deformation $H_i:U_i \to Y' \times Y'$ of $(h \times f')|_{U_i}$ into $\Delta Y'$ given by $H_i(y,x,t) = G_i(h(y),x,t)$; hence, $(gh \times gf')|_{U_i}$ can be deformed into $\Delta Y$. Therefore, $\halfTC{f} \leq k$.
\end{proof}

\begin{Remark}
Propositions \ref{half fibration replacement} and \ref{half cofibration replacement} imply that $f$ can always be replaced by either a fibration or a cofibration for the purposes of computing $\halfTC{f}$. In fact, many of the bounds for mixed TC that have been presented here can also be obtained by taking a fibration replacement and then applying the results of \cite{Pavesic_3}.
\end{Remark}

\subsection{Cohomological Lower Bound}

\begin{Theorem}
Let $f:X \to Y$ be any map such that $X \times Y$ is an ANR. Suppose that $u_i \in \widetilde{H}^\ast(Y \times X; A_i)$ are such that $u_i \in \ker (f,Id_X)^\ast$ and $u_0 \cup ... \cup u_k \neq 0$, where each $A_i$ are $\pi_1(Y) \times \pi_1(X)$ modules. Then $\halfTC{f} \geq k + 1$. In other words,
\[\cuplength{\ker (f,Id_X)^\ast} \leq \halfTC{f}.\]
\end{Theorem}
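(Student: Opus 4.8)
The plan is to reproduce the sectional-category cup-length argument from the proof of Theorem~\ref{pullback cohomology bound}, with $X \times X$ replaced by $Y \times X$ and with the diagonal map $\Delta^X$ replaced by the map $(f,Id_X):X \to Y \times X$, $x \mapsto (f(x),x)$. This is the correct substitute because $(Id_Y \times f)\circ(f,Id_X) = \Delta^Y \circ f$ has image in $\Delta Y$, which is exactly the target into which the restrictions $(Id_Y \times f)|_{U_i}$ are deformed in Theorem~\ref{mixed definitions}. The only genuinely new ingredient is the mixed counterpart of property~(4) of Theorem~\ref{pullback definitions}, which I would establish first; after that the argument is formal.

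First I would show: if $U \subset Y \times X$ carries a mixed $f$-motion planner, then the inclusion $j:U \hookrightarrow Y \times X$ is homotopic to $(f,Id_X)\circ \pi_X|_U$, where $\pi_X:Y \times X \to X$ is the projection. Indeed, if $f_U:U \to Y^I$ is a mixed $f$-motion planner, then $F(y,x,t) = (f_U(y,x)(1-t),x)$ is a homotopy from $(f,Id_X)\circ\pi_X|_U$ (at $t=0$) to $j$ (at $t=1$); one may equivalently read this off a partial section of $(Id_Y \times f)^\ast \Delta_0^Y$ using Theorem~\ref{mixed definitions}. So now suppose $\halfTC{f} \le k$ and fix an open cover $U_0,\dots,U_k$ of $Y \times X$ with mixed $f$-motion planners, giving homotopies $j_i \simeq (f,Id_X)\circ\pi_X|_{U_i}$.

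Next, given classes $u_i \in \widetilde{H}^\ast(Y \times X; A_i)$ with $u_i \in \ker (f,Id_X)^\ast$ and $u_0 \cup \dots \cup u_k \ne 0$, I would argue exactly as in Theorem~\ref{pullback cohomology bound}. From the homotopy above,
\[ j_i^\ast u_i = (\pi_X|_{U_i})^\ast (f,Id_X)^\ast u_i = 0,\]
so by exactness of the long exact sequence of the pair $(Y \times X, U_i)$ there is $w_i \in H^\ast(Y \times X, U_i; A_i)$ with $q_i^\ast w_i = u_i$, where $q_i:Y \times X \to (Y \times X, U_i)$ is the obvious map of pairs. The relative cup product $w_0 \cup \dots \cup w_k$ lies in
\[ H^\ast\!\left(Y \times X, U_0 \cup \dots \cup U_k; A_0 \otimes \dots \otimes A_k\right) = H^\ast\!\left(Y \times X, Y \times X; A_0 \otimes \dots \otimes A_k\right) = 0,\]
so $u_0 \cup \dots \cup u_k = q_0^\ast w_0 \cup \dots \cup q_k^\ast w_k = q^\ast(w_0 \cup \dots \cup w_k) = 0$ with $q:Y \times X \to (Y \times X, Y \times X)$, contradicting $u_0 \cup \dots \cup u_k \ne 0$. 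Hence $\halfTC{f} \ge k+1$, i.e.\ $\cuplength{\ker (f,Id_X)^\ast} \le \halfTC{f}$. As in Theorem~\ref{pullback cohomology bound}, the ANR hypothesis on $X \times Y$ is what lets us work with the local coefficient systems $A_i$ (these are $\pi_1(Y) \times \pi_1(X) = \pi_1(Y \times X)$ modules), their relative versions, and the relative cup products as in the CW setting.

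The only step that requires real thought is the first one: identifying $(f,Id_X):X \to Y \times X$ as the right stand-in for the diagonal, and checking that a mixed $f$-motion planner on $U_i$ yields the factorization $j_i \simeq (f,Id_X)\circ\pi_X|_{U_i}$. Once this mixed analogue of property~(4) of Theorem~\ref{pullback definitions} is in hand, the remainder is a verbatim transcription of the proof of Theorem~\ref{pullback cohomology bound}, with $X \times X$, $\Delta^X$, $\Delta^\ast$ and $(f \times f)^\ast$ replaced by $Y \times X$, $(f,Id_X)$ and $(f,Id_X)^\ast$; note that here the single operator $(f,Id_X)^\ast$ subsumes both the $\ker \Delta^\ast$ condition and the ${\rm im}\,(f \times f)^\ast$ condition that appeared separately in the pullback case.
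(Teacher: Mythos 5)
Your proof is correct, but it takes a genuinely different route from the paper. You argue directly from an open cover with mixed $f$-motion planners: the key observation that a mixed planner on $U_i$ makes the inclusion $U_i \hookrightarrow Y \times X$ homotopic to $(f,Id_X)\circ \pi_X|_{U_i}$ is exactly the right mixed analogue of property (4) of Theorem \ref{pullback definitions}, and after that the relative cup-product argument is a faithful transcription of the proof of Theorem \ref{pullback cohomology bound}. The paper instead proves the bound in two lines by a reduction: replace $f$ by a fibration (Proposition \ref{half fibration replacement}), invoke Theorem \ref{pavesic fibration} to identify $\halfTC{f} = \robotTC{f} = \secat{\pi_f}$, and then quote the standard cup-length lower bound for the sectional category of a fibration (Schwarz, or Pave\u{s}i\'c's version adapted to ANRs and $\pi_1(Y)\times\pi_1(X)$ coefficients). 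Your approach buys self-containedness and parallelism with Section 3, and it makes transparent why a single hypothesis $u_i \in \ker (f,Id_X)^\ast$ suffices here (the deformation takes place in $Y \times X$ itself, so no auxiliary ``image'' condition is needed, unlike the pullback case where one must factor through $(f\times f)^\ast$); it also implicitly verifies that $(f,Id_X)$ is the correct homotopy substitute for the fibration $(Id_Y\times f)^\ast\Delta_0^Y$, which the paper's route gets for free from Pave\u{s}i\'c's theorem. The paper's approach buys brevity and reuses homotopy invariance under fibration replacement, at the cost of leaning on external results for the local-coefficient generalization. Your treatment of the ANR hypothesis and local coefficients is at the same level of rigor as the paper's own proof of Theorem \ref{pullback cohomology bound}, so no gap there.
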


\begin{proof}
First note that we can assume $f$ is a fibration by taking a fibration replacement and applying Proposition \ref{half fibration replacement} so that $\pi_f$ is a fibration with $\secat{\pi_f} = \halfTC{f} = \robotTC{f}$ by Theorem \ref{pavesic fibration}. From here one need only apply the usual proof for the cup product lower bound for the sectional category of the fibration $\pi_f$ (see \cite{Schwarz}), or one could note that the cup product lower bound in \cite{Pavesic_3} also works more generally for ANRs and coefficients in $\pi_1(Y) \times \pi_1(X)$ modules.
\end{proof}

\subsection{Comparisons}

So far this paper has mentioned 3 potential notions for the topological complexity of a map, two of which are the homotopy invariant notions that are studied in depth in sections 3 and 4. These notions are defined as pullbacks, but one might also consider relative notions such as $\TC{C_f}$ where $C_f$ is the mapping cone of $f$; or $\relTC{M_f}{X \times X}$ where $M_f$ is the mapping cylinder; or the relative topological complexity of the pair $\pairTC{M_f}{X}$, whose general notion was introduced in \cite{Short}.

\begin{Example}
If $f = Id_{S^1}$, then $C_f$ is contractible so that $\TC{C_f}=0$, but $\TC{S^1}=1$.
\end{Example}

This shows that $\TC{C_f}$ wouldn't be a generalization of the topological complexity of a space, so it isn't considered here.

\begin{Definition}
Let $(X,A)$ with $A \subset X$ be a pair of topological spaces. Then the relative topological complexity of $(X,A)$, denoted $\pairTC{X}{A}$, is defined to be $\secat{(i \times Id_X)^\ast \Delta_0^X}$ where $i:A \to X$ is the inclusion map. (See \cite{Short} for a full introduction of this invariant.)
\end{Definition}

\begin{Remark}
By Theorem \ref{mixed definitions}, $\pairTC{X}{A}$ is the same as $\halfTC{i}$.
\end{Remark}

We now have a list of reasonable definitions for the topological complexity of a map:
\begin{enumerate}
    \item $\pullTC{f}$
    \item $\relTC{M_f}{X \times X}$
    \item $\halfTC{f}$
    \item $\pairTC{M_f}{X}$
    \item $\robotTC{f}$
\end{enumerate}
The following theorem describes their relationships:

\begin{Theorem}
Let $f:X \to Y$ be any map. Then
\[ \pullTC{f} = \relTC{M_f}{X \times X} \leq \halfTC{f} = \pairTC{M_f}{X} \leq \robotTC{f}.\]
\end{Theorem}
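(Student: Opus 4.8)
The plan is to establish the chain of (in)equalities in four pieces, reading the statement as (a) $\pullTC{f} = \relTC{M_f}{X \times X}$, (b) $\relTC{M_f}{X \times X} \leq \halfTC{f}$, (c) $\halfTC{f} = \pairTC{M_f}{X}$, and (d) $\pairTC{M_f}{X} \leq \robotTC{f}$. The key observation that makes (a) and (c) essentially formal is that the inclusion $j:X \to M_f$ into the mapping cylinder is a homotopy equivalence (indeed a homotopy domination with the standard deformation retraction $M_f \to X$ as a right inverse), and that $f$ factors through $M_f$ as $f = \overline{f} \circ j$ where $\overline{f}:M_f \to Y$ is a deformation retraction, hence also a homotopy equivalence.

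For (a), I would start from the Example identity $\pullTC{i} = \relTC{Z}{A \times A}$ for an inclusion $i:A \hookrightarrow Z$, applied to $j:X \hookrightarrow M_f$, giving $\pullTC{j} = \relTC{M_f}{X \times X}$. Since $\overline{f}:M_f \to Y$ is a homotopy equivalence and $f = \overline{f}j$, Proposition \ref{pullback cofibration replacement} gives $\pullTC{j} = \pullTC{\overline{f}j} = \pullTC{f}$, which yields (a). For (c), I would use the Remark identifying $\pairTC{M_f}{X}$ with $\halfTC{i}$ for the inclusion $i:X \hookrightarrow M_f$; then $\halfTC{i} = \halfTC{\overline{f}i} = \halfTC{f}$ by Proposition \ref{half cofibration replacement}, again because $\overline{f}$ is a homotopy equivalence. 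For (d), since $\pairTC{M_f}{X} = \halfTC{i}$ and $i:X \to M_f$ is a cofibration (the standard mapping-cylinder inclusion), one may apply Proposition \ref{half fibration replacement} to replace $i$ by a fibration $\widetilde{i}$ with $\halfTC{i} = \halfTC{\widetilde{i}}$, and then Theorem \ref{pavesic fibration} identifies $\halfTC{\widetilde{i}} = \secat{(Id \times \widetilde{i})^\ast \Delta_0} = \robotTC{\widetilde{i}}$; the inequality $\robotTC{\widetilde{i}} \leq \robotTC{i}$ (equivalently $\robotTC{f}$, after checking $\robotTC{}$ is suitably invariant, or arguing directly) then closes the chain. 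Alternatively, (d) follows directly from the inequality $\halfTC{f} \leq \robotTC{f}$ promised in the introduction together with (c); I would prefer to prove $\halfTC{f} \leq \robotTC{f}$ head-on by taking a closed $\robotTC{f}$-style filtration of $Y \times X$ and observing that a partial section of $\pi_f$ over $C_i \setminus C_{i-1}$ is exactly a mixed $f$-motion planner there, then passing from the closed-filtration formulation to an open-cover formulation (this is where normality/ANR hypotheses or the standard swelling argument enters).

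The one genuine inequality that is not purely formal is (b), $\relTC{M_f}{X \times X} \leq \halfTC{f}$. Here I would use Theorem \ref{pullback definitions}: a section over $Z \subset X \times X$ of $(j \times j)^\ast \Delta_0^{M_f}$ amounts to a $j$-motion planner, i.e.\ for $(x_0,x_1)\in Z$ a path in $M_f$ from $j(x_0)$ to $j(x_1)$. Given a mixed $f$-motion planning algorithm, i.e.\ an open cover $\{U_\alpha\}$ of $Y \times X$ with mixed $f$-motion planners (a path in $Y$ from $y$ to $f(x)$ over $(y,x)\in U_\alpha$), I would pull back along the map $X \times X \to Y \times X$, $(x_0,x_1) \mapsto (f(x_0), x_1)$, to cover $X \times X$, and on each piece build a path in $M_f$ from $j(x_0)$ to $j(x_1)$ by concatenating: the tautological path in $M_f$ from $j(x_0)$ down to $f(x_0) \in Y \subset M_f$ (the mapping-cylinder segment), then the supplied path in $Y$ from $f(x_0)$ to $f(x_1)$, then the reverse cylinder segment from $f(x_1)$ back up to $j(x_1)$. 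Continuity and the endpoint conditions are routine; the main point requiring care is that these concatenations assemble into a continuous map on each open piece, which follows because the cylinder segments depend continuously on the basepoint. This gives $\pullTC{j} \leq \halfTC{f}$, and combined with (a) yields (b).

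The step I expect to be the main obstacle is (d), specifically reconciling the closed-filtration definition of $\robotTC{f}$ with the open-cover / sectional-category machinery used everywhere else: one must either invoke that for maps between ANRs (or suitably nice spaces) the closed and open versions of $\robotTC{}$ agree, or run Pave\u{s}i\'c's fibration-replacement argument carefully. The formal identities (a) and (c) are immediate from the cited replacement propositions once one records that $j$ and $i$ are homotopy equivalences / cofibrations, and (b) is a concrete path-concatenation argument of the same flavor as the proofs of Theorem \ref{pullback definitions}.
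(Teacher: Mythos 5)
Your pieces (a)--(c) reproduce the paper's argument: (a) and (c) are exactly the cofibration replacement via the mapping cylinder (Propositions \ref{pullback cofibration replacement} and \ref{half cofibration replacement} applied to $i:X\to M_f$, together with the examples/remark identifying $\pullTC{i}$ with $\relTC{M_f}{X\times X}$ and $\halfTC{i}$ with $\pairTC{M_f}{X}$), and your step (b) is the paper's observation that $(f\times f)^\ast \Delta_0^Y=(f\times Id_X)^\ast (Id_Y\times f)^\ast \Delta_0^Y$ combined with Proposition \ref{sectional pullback}, written out in motion-planner language; your detour through $M_f$ there is harmless but unnecessary, since pulling a mixed $f$-motion planner back along $f\times Id_X$ already produces an $f$-motion planner and gives $\pullTC{f}\leq\halfTC{f}$ directly.

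The genuine gap is in (d), the inequality $\halfTC{f}\leq\robotTC{f}$. Your first route replaces the wrong map: after passing to a fibration replacement $\widetilde{i}$ of $i:X\to M_f$ you still need $\robotTC{\widetilde{i}}\leq\robotTC{f}$, and the parenthetical ``after checking that the robotic TC is suitably invariant'' cannot be discharged --- the robotic TC is precisely not homotopy invariant (that is the motivation of the paper), and $\widetilde{i}$ has codomain $M_f$ rather than $Y$, so none of the available results compares it with $\robotTC{f}$. The paper instead replaces $f$ itself by a fibration $p_f$: Proposition \ref{half fibration replacement} gives $\halfTC{f}=\halfTC{p_f}$, Theorem \ref{pavesic fibration} gives $\halfTC{p_f}=\robotTC{p_f}$, and the remaining inequality $\robotTC{p_f}\leq\robotTC{f}$ is exactly the fibration-replacement result proved in \cite{Pavesic_3}, so no invariance of the robotic TC is ever invoked. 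Your preferred ``head-on'' route has the defect you yourself flag: a partial section of $\pi_f$ over $C_i\setminus C_{i-1}$ is a path in $X$ (one must compose with $f$ to obtain a mixed $f$-motion planner, so ``is exactly'' is off), and, more seriously, these locally closed pieces are not open, so converting Pave\u{s}i\'c's closed filtration into an open cover needs a swelling/ANR or normality hypothesis that the statement (``any map $f:X\to Y$'') does not provide. As written, neither of your two routes closes (d); the fix is to run the replacement on $f$ itself and quote the two results of Pave\u{s}i\'c, as the paper does.
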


\begin{proof}
First note that Propositions \ref{pullback cofibration replacement} and \ref{half cofibration replacement} allow us to replace $f$ with the cofibration $i:X \to M_f$ for the purposes of computing $\pullTC{f}$ and $\halfTC{f}$ respectively. Therefore, $\pullTC{f} = \relTC{M_f}{X \times X}$ and $\halfTC{f} = \pairTC{M_f}{X}$.

The inequality $\halfTC{f} \leq \robotTC{f}$ follows by taking a fibration replacement $p_f$ of $f$ and applying Theorem \ref{pavesic fibration} to get that $\halfTC{f} = \halfTC{p_f} = \robotTC{p_f}$. Then \cite{Pavesic_3} proves that fibration replacements give the inequality $\robotTC{p_f} \leq \robotTC{f}$.

All that remains is to prove $\pullTC{f} \leq \halfTC{f}$. Note that $\pullTC{f}$ and $\halfTC{f}$ are defined as the sectional categories of $(f \times f)^\ast \Delta_0^Y$ and $(Id_Y \times f)^\ast \Delta_0^Y$ respectively. Furthermore, it follows that
\[ (f \times f)^\ast \Delta_0^Y
= (f \times Id_X)^\ast (Id_Y \times f)^\ast \Delta_0^Y \]
so that $\pullTC{f} \leq \halfTC{f}$ by Proposition \ref{sectional pullback}.
\end{proof}

\begin{Corollary}
Let $f:X \to Y$ be a fibration. Then $\halfTC{f} = \pairTC{M_f}{X} = \robotTC{f}$.
\end{Corollary}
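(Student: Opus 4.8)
The plan is to observe that almost everything has already been done: the theorem immediately preceding this corollary establishes the chain
\[ \pullTC{f} = \relTC{M_f}{X \times X} \leq \halfTC{f} = \pairTC{M_f}{X} \leq \robotTC{f}\]
for \emph{any} map $f$, so in particular the equality $\halfTC{f} = \pairTC{M_f}{X}$ holds with no fibration hypothesis whatsoever. Consequently the only thing left to verify is the single equality $\halfTC{f} = \robotTC{f}$ in the special case that $f$ is a fibration, i.e.\ it suffices to upgrade the inequality $\halfTC{f} \leq \robotTC{f}$ to an equality under that hypothesis.

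For this I would invoke Theorem \ref{pavesic fibration}: since $f$ is a fibration, $\pi_f$ is a fibration and $\robotTC{f} = \secat{\pi_f} = \secat{(Id_Y \times f)^\ast \Delta_0^Y}$. On the other side, Theorem \ref{mixed definitions} identifies, for every subset $Z \subset Y \times X$, the existence of a mixed $f$-motion planner on $Z$ with the existence of a partial section of $(Id_Y \times f)^\ast \Delta_0^Y$ over $Z$; applying this to the open sets of an optimal cover shows directly from the definitions that $\halfTC{f} = \secat{(Id_Y \times f)^\ast \Delta_0^Y}$. Stringing these two identifications together yields $\halfTC{f} = \secat{(Id_Y \times f)^\ast \Delta_0^Y} = \robotTC{f}$, and combined with $\halfTC{f} = \pairTC{M_f}{X}$ from the preceding theorem this gives the claimed three-way equality.

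There is essentially no hard step here — the content is entirely in Theorem \ref{pavesic fibration} (Pave\v{s}i\'c's identification of $\robotTC{f}$ with a sectional category when $f$ is fibrant) and in the already-proved preceding theorem. If I wanted to be careful about one point, it would be making sure the bookkeeping of open covers of $Y \times X$ matches on both sides, but since both $\halfTC{f}$ and $\secat{(Id_Y \times f)^\ast \Delta_0^Y}$ are defined using covers by open sets of the same base $Y \times X$, and Theorem \ref{mixed definitions} gives the section-vs-motion-planner translation set-by-set, this is immediate. I would therefore present the proof as a two-line deduction citing the preceding theorem, Theorem \ref{pavesic fibration}, and Theorem \ref{mixed definitions}.
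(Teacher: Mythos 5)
Your proposal is correct and follows essentially the same route as the paper: the paper's proof is simply ``immediate from the previous theorem and Theorem \ref{pavesic fibration}'', which is exactly your deduction, with your use of Theorem \ref{mixed definitions} only making explicit the identification $\halfTC{f} = \secat{(Id_Y \times f)^\ast \Delta_0^Y}$ that the paper already uses implicitly in the preceding theorem's proof.
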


\begin{proof}
This is immediate from the previous theorem and Theorem \ref{pavesic fibration}.
\end{proof}

In \cite{Pavesic_3}, it's shown that $\halfTC{f}$ and $\robotTC{f}$ can be arbitrarily far apart when $f$ is not a fibration. Example \ref{nullhomotopic example} also shows that $\pullTC{f}$ and $\halfTC{f}$ can differ, so that each of these three notions of TC are distinct. In fact, it seems to be the case that $\halfTC{f}$ is significantly more dependent on the codomain than $\pullTC{f}$, e.g., $\halfTC{f:X \to T^n} = n$ independent of $X$ or $f$.

\section{Monoidal TC of a Map}

This section will show how $f$-motion planners can be used to extend the definition of monoidal TC to maps. A similar adjustment can also be used to define the symmetric TC of maps, but that will be omitted for purposes of this work.

For this section, denote by $\Delta_f$ the set $(f \times f)^{-1}(\Delta Y)$.

\begin{Definition}
Let $f:X \to Y$ be any map.
\begin{enumerate}
    \item A {\it reserved $f$-motion planner} on a subset $Z \subset X \times X$ is an $f$-motion planner $f_Z$ on $Z$ such that $f_Z|_{\Delta_f} = i_{\subsize{Y}} (f \times f)|_{Z \cap \Delta_f}$.
    \item A {\it reserved $f$-motion planning algorithm} is a cover of $X \times X$ by sets $Z_0,...,Z_k$ such that on each $Z_i$ there is some reserved $f$-motion planner $f_i:Z_i \to Y^I$.
    \item The {\it monoidal topological complexity} of $f$, denoted $\MTC{f}$, is the least $k$ such that $X \times X$ can be covered by $k+1$ open subsets $U_0,...,U_k$ on which there are reserved $f$-motion planners. If no such $k$ exists, then define $\MTC{f} = \infty$.
\end{enumerate}
\end{Definition}

Observe that we recover the usual definition $\MTC{Id_X} = \MTC{X}$ for any space $X$. Furthermore, $\pullTC{f} \leq \MTC{f}$ for any map $f:X \to Y$. Thus, $\MTC{f}$ is a good candidate for generalizing the modoidal topological complexity of a space. As we'll see in what follows, many of the basic facts for spaces generalize to maps as well.

Unfortunately, monoidal TC isn't a homotopy invariant for maps since the counterexample for spaces applies to $\MTC{Id_X}$ (see \cite{Garcia-Calcines}). Although this means it can't be characterized as sectional category, like regular TC is, the reserved $f$-motion planners can still be characterized by a deformation property as is done for spaces in \cite{Dranishnikov_1}.

\begin{Theorem}
Let $f:X \to Y$ be a map and let $Z \subset X \times X$. The following are equivalent:
\begin{enumerate}
    \item there is a reserved $f$-motion planner $f_Z:Z \to Y^I$;
    \item $(f \times f)|_Z$ can be deformed into $\Delta Y$ rel $\Delta_f \cap Z$.
\end{enumerate}
\end{Theorem}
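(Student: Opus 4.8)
The plan is to prove the equivalence by adapting the argument for Theorem \ref{pullback definitions}, specifically the equivalence between $f$-motion planners and deformations into $\Delta Y$, but keeping track of the behavior on $\Delta_f \cap Z$ throughout. The key observation is that the standard constructions in Theorem \ref{pullback definitions} --- turning a motion planner into a deformation and back --- can be performed in a way that fixes $\Delta_f \cap Z$ when the original data is already ``reserved'' in the appropriate sense.

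For the direction $(1) \implies (2)$, I would start with a reserved $f$-motion planner $f_Z:Z \to Y^I$, so that $f_Z$ restricted to $\Delta_f \cap Z$ assigns to each $(x_0,x_1)$ the constant path at $f(x_0) = f(x_1)$. Mimicking the $(2) \implies 3$ step of Theorem \ref{pullback definitions}, I would define a deformation $H:Z \times I \to Y \times Y$ of $(f \times f)|_Z$ into $\Delta Y$ by a formula such as $H(x_0,x_1,t) = \bigl( f_Z(x_0,x_1)(t), f(x_1) \bigr)$. One must check that on $\Delta_f \cap Z$ this homotopy is stationary: since $f_Z(x_0,x_1)$ is the constant path at $f(x_1)$ there, we get $H(x_0,x_1,t) = (f(x_1),f(x_1)) = (f \times f)(x_0,x_1)$ for all $t$, which is exactly the ``rel $\Delta_f \cap Z$'' condition. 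Note the image at $t=1$ lands in $\Delta Y$, as needed.

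For the direction $(2) \implies (1)$, I would take a deformation $H:Z \times I \to Y \times Y$ of $(f \times f)|_Z$ into $\Delta Y$ that is stationary on $\Delta_f \cap Z$, and run the $(3) \implies 2$ construction of Theorem \ref{pullback definitions}: define $f_Z(x_0,x_1)$ by traversing $\text{proj}_0(H(x_0,x_1,\cdot))$ on $[0,\frac12]$ (reparametrized) and then $\text{proj}_1(H(x_0,x_1,\cdot))$ backwards on $[\frac12,1]$. This is a well-defined $f$-motion planner for the same reasons as before. The reserved condition is then immediate: on $\Delta_f \cap Z$ the homotopy $H$ is constantly equal to $(f(x_0),f(x_1)) = (f(x_0),f(x_0))$, so both halves of the concatenation are constant at $f(x_0)$, giving precisely $f_Z|_{\Delta_f \cap Z} = i_{\subsize{Y}}(f \times f)|_{Z \cap \Delta_f}$.

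The main obstacle --- really the only subtlety --- is bookkeeping the ``rel'' condition consistently: in particular making sure the deformation produced in $(1) \implies (2)$ is genuinely stationary on $\Delta_f \cap Z$ and not merely valued in $\Delta Y$ there, and conversely that the path concatenation in $(2) \implies (1)$ collapses to a constant path on $\Delta_f \cap Z$ rather than some nonconstant loop. Both hinge on the choice of formula: using $f(x_1)$ (equivalently $f(x_0)$, since they agree on $\Delta_f$) as the ``reserved'' endpoint coordinate in the homotopy, and using the back-and-forth concatenation rather than a direct concatenation through an auxiliary path. Everything else is continuity and endpoint checks identical to those already carried out in the proof of Theorem \ref{pullback definitions}, so I would reference that proof rather than repeat the routine verifications.
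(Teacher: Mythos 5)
Your proposal is correct and follows essentially the same route as the paper: the deformation $H(x_0,x_1,t) = \bigl(f_Z(x_0,x_1)(t), f(x_1)\bigr)$ for one direction, and the back-and-forth concatenation of $\text{proj}_0 H$ and reversed $\text{proj}_1 H$ for the other, with the reserved/rel conditions verified exactly as you describe.
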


\begin{proof}
($\bf 1 \implies 2$) Let $f_Z:Z \to Y^I$ be a reserved $f$-motion planner. Now define a homotopy $H:X \times X \times I \to Y \times Y$ by
\[ H(x_0,x_1,t) = \left( f_Z(x_0,x_1)(t),f(x_1) \right),\]
which is a deformation of $(f \times f)|_Z$ into $\Delta Y$ rel $\Delta_f \cap Z$.

($\bf 2 \implies 1$) Now let $H:Z \times I \to Y \times Y$ be a deformation of $(f \times f)|_Z$ into $\Delta Y$ rel $\Delta_f \cap Z$. Define a map $f_Z:Z \to Y^I$ by
\[ f_Z(x_0,x_1)(t) =
\begin{cases}
\text{proj}_0 \left(H(x_0,x_1,2t) \right) & \text{if } 0 \leq t \leq \frac{1}{2} \\
\text{proj}_1 \left(H(x_0,x_1,2 - 2t) \right) & \text{if } \frac{1}{2} \leq t \leq 1
\end{cases}\]
where $\text{proj}_0,\text{proj}_1:Y \times Y \to Y$ are the projection maps into the corresponding coordinates. Then $f_Z$ is well-defined and continuous since $H(x_0,x_1,t) \in \Delta Y$ for all $(x_0,x_1) \in Z$. Moreover,
\[ f_Z(x_0,x_1)(0)
= \text{proj}_0 \left(H(x_0,x_1,0) \right)
= \text{proj}_0 \left(f(x_0),f(x_1)) \right)
= f(x_0)\]
and
\[ f_Z(x_0,x_1)(1)
= \text{proj}_1 \left(H(x_0,x_1,2-2) \right)
= \text{proj}_1 \left(H(x_0,x_1,0) \right)
= \text{proj}_1 \left(f(x_0),f(x_1)) \right)
= f(x_1).\]
Therefore, $f_Z$ is an $f$-motion planner on $Z$. Furthermore, if $f(x_0) = f(x_1)$ for $(x_0,x_1) \in Z$, then $H(x_0,x_1,t)=f(x_0)$ since $H$ is a deformation rel $\Delta_f \cap Z$; hence, $\overline{f}$ is reserved.
\end{proof}

In \cite{Dranishnikov_1} and \cite{Iwase_Sakai}, it was shown for spaces that $\TC{X} \leq \MTC{X} \leq \TC{X} + 1$ when $X$ is an ENR. This generalizes to maps as follows:

\begin{Theorem} \label{monoidal_inequality}
Let $f:X \to Y$ be any map with $X$ an ENR and $Y$ Hausdorff. Then $\pullTC{f} \leq \MTC{f} \leq \pullTC{f} + 1$.
\end{Theorem}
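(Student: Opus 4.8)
The plan is to get the lower bound for free and then transplant the argument for $\TC{X}\le\MTC{X}\le\TC{X}+1$ of \cite{Iwase_Sakai,Dranishnikov_1} to maps. For the lower bound, every reserved $f$-motion planner is in particular an $f$-motion planner, so every reserved $f$-motion planning algorithm is an $f$-motion planning algorithm and $\pullTC{f}\le\MTC{f}$ with no hypotheses needed (as already noted after the definition of $\MTC{f}$). So the content is $\MTC{f}\le\pullTC{f}+1$, and I may assume $\pullTC{f}=n<\infty$, since otherwise the bound is vacuous. Fix an $f$-motion planning algorithm: open sets $U_0,\dots,U_n$ covering $X\times X$ with $f$-motion planners $f_i\colon U_i\to Y^I$. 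The goal is to manufacture one extra open set $W_0$ that is a neighborhood of $\Delta_f=(f\times f)^{-1}(\Delta Y)$ and carries a reserved $f$-motion planner, and then to delete a neighborhood of $\Delta_f$ from each $U_i$.

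Granting such a $W_0$, here is the bookkeeping. Since $Y$ is Hausdorff, $\Delta Y$ is closed in $Y\times Y$, so $\Delta_f$ is closed in $X\times X$; and since $X$ is an ENR, $X\times X$ is metrizable and therefore normal, so there is a closed set $C$ with $\Delta_f\subseteq\mathrm{int}\,C\subseteq C\subseteq W_0$. Put $W_{i+1}:=U_i\setminus C$ for $i=0,\dots,n$. Each $W_{i+1}$ is open and disjoint from $\Delta_f$, so $f_i|_{W_{i+1}}$ is (vacuously) a reserved $f$-motion planner, and $W_0\cup W_1\cup\cdots\cup W_{n+1}=W_0\cup\bigl((X\times X)\setminus C\bigr)=X\times X$ because $C\subseteq W_0$. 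Thus $W_0,\dots,W_{n+1}$ is an $(n+2)$-member reserved $f$-motion planning algorithm, and $\MTC{f}\le n+1=\pullTC{f}+1$.

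The real work is the construction of $W_0$. By the deformation characterization of reserved $f$-motion planners established just above, a reserved $f$-motion planner on an open $W_0\supseteq\Delta_f$ is the same thing as a deformation of $(f\times f)|_{W_0}$ into $\Delta Y$ rel $\Delta_f$; since $(f\times f)(\Delta_f)\subseteq\Delta Y$ already, this is a relative deformation of a map that is already ``on target'' along $\Delta_f$. The approach is to feed in the ENR structure of $X$: embed $X$ as a retract of an open $O\subseteq\mathbb{R}^N$ with retraction $r\colon O\to X$, view $\Delta_f=X\times_Y X$ as a closed subset of the ENR $X\times X$, and use $r$ together with the straight-line deformation in $\mathbb{R}^N$ to exhibit an open $W_0\supseteq\Delta_f$ that deformation retracts onto $\Delta_f$. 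Composing that deformation retraction with $f\times f$ then supplies the desired deformation into $\Delta Y$ rel $\Delta_f$, because $f\times f$ carries $\Delta_f$ into $\Delta Y$ and the retraction fixes $\Delta_f$ pointwise. (Alternatively, when $\Delta Y\hookrightarrow Y\times Y$ is a cofibration one can work downstairs: a halo of $\Delta Y$ deformation retracts onto it rel $\Delta Y$, and pulling this back along $f\times f$ produces $W_0$ and its planner at once.)

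I expect the construction of $W_0$ to be the main obstacle. In the space case ($f=Id_X$) the relevant subset is $\Delta X$, which the ENR hypothesis makes a neighborhood retract in $X\times X$, and the straight-line retraction visibly yields a reserved motion planner because a path with coinciding endpoints is constant there. For a general map, $\Delta_f$ can be strictly larger than $\Delta X$---it contains every pair $(x_0,x_1)$ with $f(x_0)=f(x_1)$---and the naive ``straight line in $X$, then apply $f$'' planner need not be constant on $\Delta_f\setminus\Delta X$; so one must check with some care that $\Delta_f$, as a closed (and suitably embedded) subset of the ENR $X\times X$, is still a neighborhood deformation retract, which is exactly what makes the relative deformation of $f\times f$ available. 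Once that is in place, the shrinking argument above finishes the job, and together with the trivial lower bound one obtains $\pullTC{f}\le\MTC{f}\le\pullTC{f}+1$.
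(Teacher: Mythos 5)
Your overall strategy coincides with the paper's: the lower bound is formal (every reserved $f$-motion planner is an $f$-motion planner), and for the upper bound you puncture an optimal $f$-motion planning cover away from $\Delta_f=(f\times f)^{-1}(\Delta Y)$ (legitimate since $\Delta_f$ is closed, $Y$ being Hausdorff) and add one extra open neighborhood $W_0$ of $\Delta_f$ carrying a reserved $f$-motion planner, obtained by composing $f\times f$ with a deformation of $W_0$ onto $\Delta_f$ rel $\Delta_f$. Your bookkeeping with the closed set $C$ is fine, though unnecessary: since $\Delta_f$ is closed one may simply take $V_i=U_i\setminus\Delta_f$, as the paper does.

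The genuine gap is that the one step carrying all the content --- the construction of $W_0$ --- is exactly the step you do not prove. Your sketch (embed $X$ in $\mathbb{R}^N$ as a retract of an open set and use the straight-line homotopy) only re-derives that $X\times X$ is an ENR; it does not produce a neighborhood of $\Delta_f$ \emph{inside} $X\times X$ deformation retracting onto $\Delta_f$, because that requires $\Delta_f$ itself to be a neighborhood retract, and a mere closed subset of an ENR need not be one (e.g.\ $\{0\}\cup\{1/n\}$ in $\mathbb{R}$). You flag this honestly (``one must check with some care that $\Delta_f$ \dots is still a neighborhood deformation retract'') but never supply the check, and your fallback needs the unassumed hypothesis that $\Delta Y\hookrightarrow Y\times Y$ is a cofibration. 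The paper closes precisely this gap by asserting that $\Delta_f$ is an ENR (from $X$ being an ENR and $Y$ Hausdorff), so that the closed pair $\Delta_f\subset X\times X$ admits an open neighborhood $W$ and a homotopy $H:W\times I\to X\times X$ from the inclusion to a retraction onto $\Delta_f$ fixing $\Delta_f$; then $(f\times f)H$ is the required deformation into $\Delta Y$ rel $\Delta_f$ because $(f\times f)(\Delta_f)\subset\Delta Y$. So your proposal is a correct reduction together with a correctly identified but missing lemma; to complete it you must prove (or cite) that $\Delta_f$ is an ENR, equivalently a neighborhood deformation retract of $X\times X$, under the stated hypotheses --- which is also the delicate point on which the paper's own proof leans.
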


\begin{proof}
Let $\pullTC{f} = k$ and let $U_0,...,U_k$ be an open cover of $X \times X$ such that each restriction $(f \times f)|_{U_i}$ has a deformation $H_i:U_i \times I \to Y \times Y$ into $\Delta Y$. Consider the open sets $V_i = U_i \setminus \Delta_f$, on which there is a deformation of $(f \times f)|_{V_i}$ rel $\Delta_f \cap V_i = \emptyset$, namely $H_i|_{V_i}$. Note that $\Delta_f$ is an ENR since $X$ is an ENR and $Y$ is Hausdorff; hence, there is an open neighborhood $W$ of $\Delta_f$ and a homotopy $H:W \times I \to X \times X$ such that $H(-,0)$ is the inclusion map and $H(-,1)$ is a retraction. Then $(f \times f)H$ is a deformation of $(f \times f)|_W$ into $\Delta Y$ rel $\Delta_f$. Thus, $ \MTC{f} \leq \pullTC{f} + 1$ since $V_0,...,V_k,W$ is an open cover of $X \times X$.
\end{proof}

\begin{Proposition} \label{quotient_category}
Let $f:X \to Y$ be any map and consider the diagram:
\[\begin{tikzcd}[column sep = 4em, row sep = 4em]
X \times X \arrow[r, "f \times f"] \arrow[d, swap, "q_{\scaleto{X}{3 pt}}"]
& Y \times Y \arrow[d, "q_{\scaleto{Y}{3 pt}}"]
\\
(X \times X) / \Delta X \arrow[r, swap, "f_\Delta"]
& (Y \times Y) / \Delta Y
\end{tikzcd}\]
where $q_{\subsize{X}}$ and $q_{\subsize{Y}}$ are the obvious quotient maps and $f_\Delta$ is the induced map on the quotient spaces. Then $\cat{q_{\subsize{Y}}(f \times f)} \leq \pullTC{f}$ and $\cat{f_\Delta} \leq \MTC{f}$.
\end{Proposition}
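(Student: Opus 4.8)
The plan is to read off both inequalities from the elementary characterization that $\cat{g}\le n$ for a map $g\colon Z\to W$ holds exactly when $Z$ admits an open cover by $n+1$ sets on each of which $g$ is nullhomotopic, together with the observation that $q_{\subsize{X}}$ collapses $\Delta X$ to the basepoint of $(X\times X)/\Delta X$ and $q_{\subsize{Y}}$ collapses $\Delta Y$. So I would start from an optimal $f$-motion planning algorithm (respectively reserved $f$-motion planning algorithm) and push the resulting deformations into $\Delta Y$ through $q_{\subsize{Y}}$.

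For $\cat{q_{\subsize{Y}}(f\times f)}\le\pullTC{f}$: I would write $\pullTC{f}=k$, take an open cover $U_0,\dots,U_k$ of $X\times X$ with $f$-motion planners $f_i\colon U_i\to Y^I$, and form the deformations $H_i\colon U_i\times I\to Y\times Y$ given by $H_i(x_0,x_1,t)=\bigl(f_i(x_0,x_1)(t),f(x_1)\bigr)$ of $(f\times f)|_{U_i}$ into $\Delta Y$, exactly as in the implication $(2\Rightarrow 3)$ of Theorem \ref{pullback definitions}. Then $q_{\subsize{Y}}H_i$ ends in $q_{\subsize{Y}}(\Delta Y)$, a single point, so it is a nullhomotopy of $q_{\subsize{Y}}(f\times f)|_{U_i}$, and the same cover $\{U_i\}$ witnesses the bound.

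For $\cat{f_\Delta}\le\MTC{f}$: I would write $\MTC{f}=k$ and take an open cover $U_0,\dots,U_k$ of $X\times X$ with \emph{reserved} $f$-motion planners $f_i$. The point of ``reserved'' is that $f_i$ is the constant path on $(f\times f)^{-1}(\Delta Y)\cap U_i$, hence on $\Delta X\cap U_i$; so the same $H_i$ now maps $(\Delta X\cap U_i)\times I$ into $\Delta Y$, and therefore $q_{\subsize{Y}}H_i$ is a nullhomotopy of $q_{\subsize{Y}}(f\times f)|_{U_i}=f_\Delta\circ q_{\subsize{X}}|_{U_i}$ that is constant, at the basepoint, along the collapsed fibre $\Delta X\cap U_i$. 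Such a homotopy factors through $q_{\subsize{X}}$, giving a nullhomotopy of $f_\Delta$ on $q_{\subsize{X}}(U_i)$; since $q_{\subsize{X}}\bigl(\bigcup_i U_i\bigr)=(X\times X)/\Delta X$, the sets $q_{\subsize{X}}(U_i)$ cover the quotient and the bound follows.

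The step I expect to need the most care is the descent in the last paragraph: checking that each $q_{\subsize{X}}(U_i)$ is open in $(X\times X)/\Delta X$ and that the quotiented homotopy is continuous. This is immediate for the $U_i$ disjoint from $\Delta X$, and for the remaining ones it is precisely where the reserved hypothesis has to be used. Concretely I would either adopt the Iwase--Sakai style convention under which the covering sets are $q_{\subsize{X}}$-saturated near $\Delta X$ (for instance, each contains $\Delta X$), so that $q_{\subsize{X}}|_{U_i}$ is a quotient map and the descent is automatic, or else first shrink the $U_i$ away from $\Delta X$ and absorb a saturated open neighborhood of $\Delta X$ into one of them, on whose image $f_\Delta$ is nullhomotopic exactly because the reserved motion planners are constant there.
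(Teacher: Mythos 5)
Your core argument is essentially the paper's proof. The first inequality is handled identically (compose the deformations of $(f \times f)|_{U_i}$ into $\Delta Y$ with $q_{\subsize{Y}}$), and for the second inequality the paper does exactly what your second paragraph does: it takes the rel-$\Delta_f \cap U_i$ deformation and asserts directly that it ``induces a nullhomotopy on each $f_\Delta|_{q_{\scaleto{X}{2.5 pt}}(U_i)}$,'' with no discussion of openness of $q_{\subsize{X}}(U_i)$ or continuity of the descended homotopy. So your worry in the last paragraph is a technicality the paper elides rather than a step its proof supplies; just be careful that neither of your proposed repairs, taken literally, proves the stated proposition. The paper's $\MTC{f}$ does not require the covering sets to contain $\Delta X$ or $\Delta_f$, so adopting the Iwase--Sakai convention bounds $\cat{f_\Delta}$ by a possibly larger invariant, not by $\MTC{f}$. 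And the second repair does not go through as described: the reserved planners give control only on the sets $U_i$ themselves, so there is no saturated open neighborhood of $\Delta X$ on whose image $f_\Delta$ is known to be nullhomotopic --- producing one requires exactly the ENR-type retraction used in Theorem \ref{monoidal_inequality} (hypotheses absent here, and even there it costs an extra set) --- while ``absorbing'' such a neighborhood into one of the shrunken $U_i$ would require a nullhomotopy on the union, which separate nullhomotopies on the two pieces do not provide. If you keep the point-set discussion, present it as a remark on a gap in the glossed-over descent (e.g.\ it is clean whenever $U_i$ is saturated), not as machinery your proof depends on.
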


\begin{proof}
Let $\pullTC{f} = k$ so that there is an open cover $U_0,...,U_k$ of $X \times X$ such that each restriction $(f \times f)|_{U_i}$ can be deformed into $\Delta Y$ via some homotopy $H_i:U_i \times I \to Y \times Y$. Now define $\overline{H}_i: X \times X \to (Y \times Y) / \Delta Y$ by $\overline{H}_i = q_{\subsize{Y}} H_i$, which is clearly a nullhomotopy of $q_{\subsize{Y}} (f \times f)$ on $U_i$; hence, $\cat{q_{\subsize{Y}}(f \times f)} \leq \pullTC{f}$.

Now let $\MTC{f} = k$. Then there are open sets $U_0,...,U_k$ such that each restriction $(f \times f)|_{U_i}$ can be deformed into $\Delta Y$ rel $\Delta_f \cap U_i$. Since the deformation on $(f \times f)|_{U_i}$ is done rel $\Delta_f \cap U_i$, it induces a nullhomotopy on each $f_\Delta|_{q_{\scaleto{X}{2.5 pt}}(U_i)}$ so that $\cat{f_\Delta} \leq k$.
\end{proof}

\begin{Corollary}
Let $f:X \to Y$ be any map with $X$ an ENR and $Y$ Hausdorff. Then 
\[ \cat{f_\Delta} \leq \pullTC{f} + 1.\]
\end{Corollary}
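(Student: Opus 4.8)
The plan is to simply chain together the two most recent results. We have Theorem~\ref{monoidal_inequality}, which under the stated hypotheses (namely $X$ an ENR and $Y$ Hausdorff) gives $\MTC{f} \leq \pullTC{f} + 1$. Separately, Proposition~\ref{quotient_category} gives $\cat{f_\Delta} \leq \MTC{f}$ for any map $f$, with no hypotheses. Composing these two inequalities yields
\[ \cat{f_\Delta} \leq \MTC{f} \leq \pullTC{f} + 1,\]
which is exactly the claim.

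So the proof is a one-line deduction: invoke Proposition~\ref{quotient_category} for the first inequality, then Theorem~\ref{monoidal_inequality} for the second (this is where the ENR and Hausdorff hypotheses are used), and conclude. There is genuinely no obstacle here — the corollary is stated precisely so that the hypotheses of Theorem~\ref{monoidal_inequality} are in force, and Proposition~\ref{quotient_category}'s relevant half is hypothesis-free. The only thing to be careful about is citing the correct half of Proposition~\ref{quotient_category}, namely $\cat{f_\Delta} \leq \MTC{f}$ rather than $\cat{q_Y(f\times f)} \leq \pullTC{f}$.

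If one wanted to be slightly more self-contained, one could remark that the statement also follows by combining $\cat{f_\Delta} \leq \MTC{f}$ with the general bound $\MTC{f} \leq \pullTC{f}+1$; but since the latter is precisely the content of Theorem~\ref{monoidal_inequality} under the given hypotheses, there is nothing further to do. I would write the proof as: ``By Proposition~\ref{quotient_category}, $\cat{f_\Delta} \leq \MTC{f}$. Since $X$ is an ENR and $Y$ is Hausdorff, Theorem~\ref{monoidal_inequality} gives $\MTC{f} \leq \pullTC{f}+1$. Combining these yields $\cat{f_\Delta} \leq \pullTC{f}+1$.''
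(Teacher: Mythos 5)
Your proof is correct and is exactly the paper's argument: the paper also deduces the corollary by combining $\cat{f_\Delta} \leq \MTC{f}$ from Proposition \ref{quotient_category} with $\MTC{f} \leq \pullTC{f}+1$ from Theorem \ref{monoidal_inequality}. Nothing is missing.
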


\begin{proof}
This is immediate by combining Theorem \ref{monoidal_inequality} and Proposition \ref{quotient_category}.
\end{proof}

\section{Discrete Group Homomorphisms}

\begin{Definition}
Given a group $\pi$, an Eilenberg-MacLane space $K(\pi,1) = B\pi$ is defined to be a space such that $\pi_1(B\pi) = \pi$ and $\pi_k(B\pi) = 0$ for $k \neq 1$.
\end{Definition}

Given $\pi$, an Eilenberg-MacLane space $B\pi = K(\pi,1)$ is unique up to homotopy equivalence so that we can define $\cat{\pi} = \cat{B\pi}$ and $\TC{\pi} = \TC{B\pi}$. The case of LS-category is known:

\begin{Theorem}[\cite{Eilenberg_Ganea}]
If $\pi$ is a group, then $\cat{\pi} = \cd{\pi}$, where $\cd{\pi}$ is the cohomological dimension of $\pi$.
\end{Theorem}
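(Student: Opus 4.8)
The plan is to prove the two inequalities $\cd{\pi}\le\cat{\pi}$ and $\cat{\pi}\le\cd{\pi}$ separately, working with a CW model of $B\pi$ and with cohomology with arbitrary $\mathbb Z\pi$-module (``local'') coefficients throughout. The trivial cases are immediate: if $\cd{\pi}=0$ then $\pi$ is trivial and $\cat{\pi}=0$, and if $\cd{\pi}=\infty$ the lower-bound argument below forces $\cat{\pi}=\infty$ as well, so in proving $\cat{\pi}\le\cd{\pi}$ we may assume $n:=\cd{\pi}$ is finite and $\ge 1$.

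For the lower bound I would bring in the Berstein--Schwarz class $\mathfrak b_\pi\in H^1(\pi;I\pi)$, the class of the extension $0\to I\pi\to\mathbb Z\pi\to\mathbb Z\to 0$ with $I\pi=\ker(\mathbb Z\pi\to\mathbb Z)$, and combine two facts. First, if $\cat{\pi}\le n$ then every $(n+1)$-fold cup product of reduced positive-degree classes with arbitrary local coefficients vanishes: this is the cup-length lower bound for $\cat$, whose proof is the same long-exact-sequence argument as in Theorem~\ref{pullback cohomology bound}, now run on the pairs $(B\pi,U_i)$ for an open cover $U_0,\dots,U_n$ of $B\pi$ by sets null-homotopic in $B\pi$. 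Taking all the classes equal to $\mathfrak b_\pi$ yields $\mathfrak b_\pi^{\,n+1}=0$ in $H^{n+1}\big(\pi;(I\pi)^{\otimes(n+1)}\big)$. Second, $\mathfrak b_\pi$ is \emph{universal}: for every $\mathbb Z\pi$-module $M$ and every $u\in H^k(\pi;M)$ with $k\ge 1$ there is a $\mathbb Z\pi$-homomorphism $\varphi\colon (I\pi)^{\otimes k}\to M$ (tensor powers carrying the diagonal $\pi$-action) with $\varphi_*\big(\mathfrak b_\pi^{\,k}\big)=u$. Granting this, $\mathfrak b_\pi^{\,n+1}=0$ forces $H^k(\pi;M)=0$ for all $k\ge n+1$ and all $M$ --- write $\mathfrak b_\pi^{\,k}=\mathfrak b_\pi^{\,n+1}\cup\mathfrak b_\pi^{\,k-n-1}=0$ and push forward along the appropriate $\varphi$ --- so $\cd{\pi}\le n$.

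For the upper bound I would argue by obstruction theory against a Ganea fibration, which sidesteps the Eilenberg--Ganea geometric construction and its delicate behaviour when $\cd{\pi}=2$. By the characterization that $\cat{X}\le n$ if and only if $p^X_n$ admits a section, it is enough to build a section of the $n$-th Ganea fibration $p^{B\pi}_n\colon G_n(B\pi)\to B\pi$. Its fiber is the $(n+1)$-fold fiberwise join of the based path fibration, hence the $(n+1)$-fold join $(\Omega B\pi)^{\ast(n+1)}\simeq\pi^{\ast(n+1)}$ of $n+1$ nonempty discrete spaces, which is $(n-1)$-connected. Building a section skeleton by skeleton over the CW complex $B\pi$, the obstruction to extending over the $(j+1)$-skeleton lies in $H^{j+1}(\pi;A_j)$, where $A_j=\pi_j\big(\pi^{\ast(n+1)}\big)$ is the $\mathbb Z\pi$-module given by the $j$-th homotopy group of the fiber with its monodromy action. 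For $j\le n-1$ this vanishes because the fiber is $(n-1)$-connected, and for $j\ge n$ it vanishes because $j+1>n=\cd{\pi}$; since every obstruction vanishes, the section exists and $\cat{\pi}\le n=\cd{\pi}$.

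The main obstacle is the universality of $\mathfrak b_\pi$ invoked in the lower bound: it is the one step that uses a property special to aspherical spaces rather than a formal property of $\cat$ of an arbitrary space, and it requires a genuine homological-algebra argument (splicing the defining extension with itself and comparing against an arbitrary free $\mathbb Z\pi$-resolution of $\mathbb Z$, or equivalently a dimension-shifting induction over $k$). Everything else is routine: the reduction of the $\cd{\pi}\le 1$ cases is trivial, and the upper bound is pure obstruction theory once one identifies the fiber of the Ganea fibration as the $(n-1)$-connected join $\pi^{\ast(n+1)}$.
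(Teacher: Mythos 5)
The paper itself gives no proof here (the theorem is quoted from Eilenberg--Ganea), so your argument stands on its own. Your lower bound $\cd{\pi}\leq\cat{\pi}$ is sound: the cup-length bound for $\cat{}$ with local coefficients plus the universality of the Berstein--Schwarz class $\beta_\pi$ is exactly the Schwarz/Dranishnikov--Rudyak mechanism that this paper invokes elsewhere (Lemma \ref{Berstein-Schwarz Lemma} and the theorem $\cd{\pi}=\max\{k\mid\beta_\pi^k\neq 0\}$), and your sketch of why universality holds is the right one. Your upper bound via sections of the Ganea fibration is also the standard modern route and is complete when $n=\cd{\pi}\geq 2$: there the fiber $\pi^{\ast(n+1)}$ is simply connected, so all obstructions are genuine cohomology classes with coefficients in $\mathbb{Z}\pi$-modules, and they vanish for the two reasons you give.

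The genuine gap is the case $n=\cd{\pi}=1$, which you declare ``trivial'' and fold into the obstruction argument. For $n=1$ the fiber of $p_1^{B\pi}$ is the join $\pi\ast\pi$, a complete bipartite graph: it is only $0$-connected and its fundamental group is a nonabelian free group (any group of cohomological dimension $1$ is torsion-free and nontrivial, hence infinite). The obstruction to pushing a section from the $1$-skeleton to the $2$-skeleton therefore does not live in $H^2(\pi;A)$ for any $\mathbb{Z}\pi$-module $A$, and its vanishing cannot be extracted from $\cd{\pi}=1$; standard obstruction theory needs abelian (indeed simple) coefficient systems exactly where your argument needs them. The failure is not cosmetic: $G_1(B\pi)\simeq\Sigma\Omega B\pi$ is a wedge of circles, so a homotopy section of $p_1^{B\pi}$ exhibits $\pi$ as a retract of a free group, hence free; thus the implication $\cd{\pi}=1\Rightarrow\cat{\pi}\leq 1$ is equivalent to the Stallings--Swan theorem and cannot follow from ``pure obstruction theory.'' The fix is simply to treat $n=1$ separately by citing Stallings--Swan (then $B\pi$ is homotopy equivalent to a wedge of circles and $\cat{\pi}=1$), reserving your obstruction argument for $n\geq 2$; with that amendment the proof is correct.
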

Since the cohomological dimension of groups with torsion is infinite, we  consider only torsion free groups in this paper.

Unfortunately, the case of topological complexity is more complicated since $\TC{\pi}$ can be anywhere between $\cd{\pi}$ and $2\cd{\pi}$. If $\pi$ is abelian, then $B\pi$ is a topological group so that $\TC{\pi} = \cd{\pi}$. If $\pi$ is a finitely generated torsion free hyberbolic group, then $\TC{\pi} = 2 \cd{\pi}$ (see \cite{Dranishnikov_2} and \cite{Farber_Mescher}).

The goal of this section is to introduce the LS-category and topological complexity of a group homomorphism. The former seems to have not been considered yet in the literature, which is likely because the LS-category of a group is known, but it is helpful in computing the latter.

Due to the homotopy invariance of Eilenberg MacLane spaces, there is a one-to-one correspondence between homomorphisms $f:G \to H$ and homotopy classes of continuous maps $Bf:BG \to BH$ that induce $f$ on the fundamental group. Therefore, the following definitions are well-defined:

\begin{Definition}
Let $f:G \to H$ be a group homomorphism.
\begin{enumerate}
    \item The LS-category of $f$, denoted $\cat{f}$, is defined to be $\cat{Bf}$.
    \item The topological complexity of $f$, denoted $\pullTC{f}$, is defined to be $\pullTC{Bf}$.
\end{enumerate}
\end{Definition}

\subsection{LS-Category}

Let $\pi$ be a group, and $I(\pi) \subset \mathbb{Z} [\pi]$ its augmentation ideal. Then 
\[ H^1(\pi;I(\pi)) \simeq \text{Hom}_{\mathbb{Z}[\pi]} \left( I(\pi), I(\pi) \right).\]
The {\it Berstein-Schwarz} class for $\pi$, denoted $\beta_\pi$, is defined to be the cohomology class on the left hand side that corresponds to $Id_{\mathbb{Z}[\pi]}$ on the right hand side.

\begin{Theorem}[\cite{Dranishnikov_Rudyak,Schwarz}]
Let $\pi$ be a group. Then 
\[\cd{\pi} = \max \{ k \mid \beta_\pi^k \neq 0 \}.\]
\end{Theorem}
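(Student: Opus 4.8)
The plan is to identify $\cd{\pi}$ with the sectional category (equivalently, the LS-category) of the based path fibration over $B\pi$ and then show that the powers of $\beta_\pi$ detect exactly that sectional category via the cup-length lower bound combined with a matching upper bound coming from the universality of the Berstein--Schwarz class. Concretely, recall that $\cat{\pi} = \cat{B\pi} = \cd{\pi}$ by the Eilenberg--Ganea theorem, so it suffices to prove $\cat{B\pi} = \max\{k \mid \beta_\pi^k \neq 0\}$.

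First I would establish the lower bound $\cat{B\pi} \geq \max\{k \mid \beta_\pi^k \neq 0\}$. This is the cup-length estimate: $\beta_\pi \in H^1(B\pi; I(\pi))$ lies in $\widetilde{H}^*(B\pi; I(\pi))$ (it has positive degree), so if $\beta_\pi^k \neq 0$ then $\cuplength{\widetilde{H}^*(B\pi)} \geq k$ with the appropriate twisted coefficients, and hence $\cat{B\pi} \geq k$. One has to be slightly careful that the cup-length bound for $\cat$ is valid with local (module) coefficients and for tensor powers of $I(\pi)$; this is standard (it is the same argument as Theorem \ref{pullback cohomology bound} specialized to the diagonal-free setting, i.e. to $\cat$), using that $\beta_\pi^k$ is the image of a relative class under $H^k(B\pi, U; I(\pi)^{\otimes k}) \to H^k(B\pi; I(\pi)^{\otimes k})$ whenever the fibration admits a section over each piece of a categorical cover.

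Next I would establish the upper bound $\cat{B\pi} \leq \max\{k \mid \beta_\pi^k \neq 0\}$. The key input is the \emph{universality} of $\beta_\pi$: for any coefficient module $M$ and any class $u \in H^n(B\pi; M)$ with $n \geq 1$, there is a coefficient homomorphism $I(\pi)^{\otimes n} \to M$ carrying $\beta_\pi^n$ to $u$ (this is Schwarz's / Berstein's theorem on the Berstein--Schwarz class; it can be cited from \cite{Schwarz} or \cite{Dranishnikov_Rudyak}). Consequently, if $\beta_\pi^{k+1} = 0$, then every cup product of $k+1$ positive-degree cohomology classes of $B\pi$ (with arbitrary local coefficients) vanishes, i.e. the cup-length of $B\pi$ in the strong (all-local-coefficients) sense is $\leq k$. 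The final step is to invoke the theorem that for an aspherical complex this strong cup-length \emph{equals} $\cat{B\pi}$ — equivalently, that $\cd{\pi} = n$ forces $\beta_\pi^n \neq 0$ and $\beta_\pi^{n+1} = 0$, which is exactly the content attributed to \cite{Dranishnikov_Rudyak, Schwarz}. So the cleanest writeup is: $\beta_\pi^n \neq 0$ for $n = \cd{\pi}$ because by universality $\beta_\pi^n$ surjects onto a nonzero class in $H^n(B\pi; M)$ for some $M$ (such a class exists since $\cd{\pi} = n$); and $\beta_\pi^{n+1} = 0$ since it lives in $H^{n+1}(B\pi; I(\pi)^{\otimes(n+1)}) = 0$ as $\cd{\pi} = n$.

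The main obstacle is the universality property of $\beta_\pi$ and getting the coefficient bookkeeping right — precisely, that $\beta_\pi^n$ is "detecting" in the sense that a suitable tensor power of the augmentation ideal maps onto any prescribed coefficient module realizing an $n$-dimensional class. Once that is in hand (either quoted wholesale from \cite{Schwarz}/\cite{Dranishnikov_Rudyak}, or reproved via the bar resolution / the canonical free resolution of $\mathbb{Z}$ over $\mathbb{Z}[\pi]$ whose $n$-th syzygy maps onto $I(\pi)^{\otimes n}$), both inequalities fall out immediately, and one concludes $\cd{\pi} = \cat{B\pi} = \max\{k \mid \beta_\pi^k \neq 0\}$. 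I would therefore structure the proof as: (i) reduce to $\cat{B\pi}$ via Eilenberg--Ganea; (ii) lower bound by cup-length; (iii) upper bound by universality plus vanishing of $H^{>\cd{\pi}}$; and flag (iii)'s universality step as the crux, citing \cite{Schwarz, Dranishnikov_Rudyak} for it.
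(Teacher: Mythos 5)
The paper does not actually prove this statement: it is quoted directly from \cite{Dranishnikov_Rudyak,Schwarz}, and the only ingredient of its proof that the paper reproduces is the universality property used in Lemma \ref{Berstein-Schwarz Lemma}. Measured against the standard argument in those sources, your final ``cleanest writeup'' is exactly right and is the proof one would give: if $\cd{\pi}=n$ then $\beta_\pi^{n+1}\in H^{n+1}\left(\pi;I(\pi)^{\otimes(n+1)}\right)=0$, while universality supplies a coefficient homomorphism $I(\pi)^{\otimes n}\to M$ carrying $\beta_\pi^{n}$ onto any chosen nonzero class of $H^{n}(\pi;M)$, so $\beta_\pi^{n}\neq 0$; this is purely algebraic and needs neither $\cat{B\pi}$ nor Eilenberg--Ganea. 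Two caveats about the surrounding scaffolding. First, the detour through $\cat{B\pi}$, the twisted cup-length lower bound, and the claim that the ``strong cup-length equals $\cat{B\pi}$ for aspherical complexes'' is, as you yourself flag, circular --- that equality is the theorem being proved --- and it is also redundant, since the vanishing $H^{n+1}(\pi;-)=0$ already gives $\max\{k\mid\beta_\pi^k\neq0\}\leq\cd{\pi}$ with no topology at all; in a final version you should simply delete steps (i)--(ii) and keep the direct argument. Second, for the case $\cd{\pi}=\infty$ (or if one defines $\cd{\pi}$ as a supremum) you need that $H^{n}(\pi;M)\neq 0$ for some $M$ in \emph{every} degree $n\leq\cd{\pi}$; this is standard (dimension shifting, or the characterization of $\cd{}$ as the projective dimension of $\mathbb{Z}$ over $\mathbb{Z}[\pi]$) but deserves a sentence. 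The reliance on universality as a quoted black box is consistent with how the paper itself treats it, so that is not a gap.
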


In light of this fact, comes the following conjecture preceded by a useful lemma:

\begin{Lemma} \label{Berstein-Schwarz Lemma}
Let $f:G \to H$ be a homomorphism between discrete groups. If $f^\ast \beta_{\subsize{H}}^k = 0$, then $f^\ast u = 0$ for all $u \in H^k(H;A)$ where $A$ is any $\mathbb{Z}H$-module.
\end{Lemma}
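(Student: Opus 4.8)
The plan is to exploit the universal property of the Berstein--Schwarz class: for any $\mathbb{Z}H$-module $A$, every class $u \in H^k(H;A)$ factors through a power of $\beta_H$. More precisely, I would first recall the standard fact (due to Schwarz, and used by Dranishnikov--Rudyak) that $\beta_\pi$ is \emph{universal} in the following sense: for any $\mathbb{Z}\pi$-module $A$ and any $u \in H^k(\pi;A)$, there is a homomorphism of $\mathbb{Z}\pi$-modules $\varphi: I(\pi)^{\otimes k} \to A$ such that $u = \varphi_\ast(\beta_\pi^k)$, where $\beta_\pi^k \in H^k(\pi; I(\pi)^{\otimes k})$ is the $k$-fold cup power (with coefficients multiplied via tensor product) and $\varphi_\ast$ is the induced coefficient homomorphism. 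This is the module-level incarnation of the fact that $I(\pi)$ represents $\widetilde H^1$ in the appropriate sense; it is exactly the ingredient that makes $\beta_\pi$ detect cohomological dimension.

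Granting that, the proof is a naturality chase. First I would set up the naturality of cup products and coefficient homomorphisms with respect to $f^\ast$: for $u = \varphi_\ast(\beta_H^k)$ with $\varphi: I(H)^{\otimes k} \to A$, we have
\[ f^\ast u = f^\ast \varphi_\ast (\beta_H^k) = \varphi_\ast f^\ast(\beta_H^k) = \varphi_\ast\bigl( (f^\ast \beta_H)^k \bigr), \]
using that $f^\ast$ commutes with the coefficient map $\varphi_\ast$ and that $f^\ast$ is a ring homomorphism on cohomology (so it carries the cup power $\beta_H^k$ to $(f^\ast\beta_H)^k$), where one views the $\mathbb{Z}H$-module structures on the coefficients as pulled back along $f$ to $\mathbb{Z}G$-module structures. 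Then the hypothesis $f^\ast\beta_H^k = 0$, i.e. $(f^\ast\beta_H)^k = 0$ in $H^k(G; I(H)^{\otimes k})$, immediately gives $f^\ast u = \varphi_\ast(0) = 0$. That finishes the argument.

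The main obstacle is making the universality statement precise and correctly tracking the module structures. One has to be careful that $\beta_H^k$ lives in cohomology with coefficients in the iterated tensor power $I(H)^{\otimes k}$ (diagonal $H$-action), that the cup power is taken with respect to the tensor-product pairing of coefficients, and that pulling back along $f$ turns a $\mathbb{Z}H$-module into a $\mathbb{Z}G$-module compatibly with all of this. I would either cite the relevant statement from \cite{Dranishnikov_Rudyak} or \cite{Schwarz} directly, or include a short lemma: since $\beta_H \in H^1(H;I(H)) \cong \mathrm{Hom}_{\mathbb{Z}H}(I(H),I(H))$ corresponds to the identity, any $u \in H^1(H;A)$ arising from $\psi \in \mathrm{Hom}_{\mathbb{Z}H}(I(H),A)$ satisfies $u = \psi_\ast \beta_H$ by definition, and then the general degree-$k$ statement follows by iterating and using that $H^k(H;-)$ for $k = \cd H$ (and more generally the relevant groups) are generated under coefficient maps and cup products by powers of $\beta_H$ — this last point is exactly the content of the cited theorem $\cd{\pi} = \max\{k : \beta_\pi^k \neq 0\}$ together with its proof. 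Once the universality input is in hand, the remainder is a one-line naturality computation with no real content.
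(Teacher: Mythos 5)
Your proposal is correct and follows essentially the same route as the paper: both invoke the universality of the Berstein--Schwarz class (that every $u \in H^k(H;A)$ is of the form $\mu_\ast \beta_{H}^k$ for some $\mathbb{Z}H$-module map $\mu: I(H)^{\otimes k} \to A$, citing Schwarz and Dranishnikov--Rudyak) and then conclude by the naturality computation $f^\ast u = \mu_\ast f^\ast \beta_{H}^k = 0$. Your extra care about tensor-power coefficients and pulled-back module structures is fine but adds nothing beyond the paper's two-line argument.
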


\begin{proof}
Note that there is a $\mathbb{Z}H$-module homomorphism $\mu:I(H)^k \to A$ such that $\mu_\ast \beta_{\subsize{H}}^k = u$ (see Proposition 34 of \cite{Schwarz} and Corollary 3.5 of \cite{Dranishnikov_Rudyak}). Therefore,
\[ f^\ast u
= f^\ast \mu_\ast \beta_{\subsize{H}}^k
= \mu_\ast f^\ast \beta_{\subsize{H}}^k
= \mu_\ast 0
= 0.\]
\end{proof}

\begin{Conjecture} \label{LS Conjecture}
Let $f:G \to H$ be a homomorphism between discrete groups. Then 
\[\cat{f} = \max \{ k \mid f^\ast \beta_{\subsize{H}}^k \neq 0 \}.\]
\end{Conjecture}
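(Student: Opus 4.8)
The plan is to prove the two inequalities separately. The bound $\cat{f} \geq \max\{k \mid f^\ast \beta_{\subsize{H}}^k \neq 0\}$ is the routine direction and should hold unconditionally; the reverse inequality is the real content, and it is the analogue for homomorphisms of the hard direction $\cat{\pi} \leq \cd{\pi}$ of the Eilenberg--Ganea theorem, so that is where I expect the difficulty to lie.

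For the lower bound, set $n_\ast = \max\{k \mid f^\ast \beta_{\subsize{H}}^k \neq 0\}$ and use $\cat{f} = \secat{(Bf)^\ast p_0^{BH}}$, whose total space is homotopy equivalent to the homotopy fibre of $Bf$. Since $\mathrm{hofib}(Bf) \to BG \xrightarrow{Bf} BH$ is null-homotopic, every class in $\mathrm{im}\,(Bf)^\ast$ pulls back to zero in that total space; this applies in particular to $f^\ast \beta_{\subsize{H}} \in H^1(BG; f^\ast I(H))$, and since $(Bf)^\ast$ is multiplicative we have $f^\ast \beta_{\subsize{H}}^k = (f^\ast \beta_{\subsize{H}})^k$ for every $k$. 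Feeding the $n_\ast$ equal factors $f^\ast \beta_{\subsize{H}}$ into the Schwarz cup-length lower bound for sectional category with local coefficients (as in the proof of Theorem \ref{pullback cohomology bound}) gives $\cat{f} \geq n_\ast$.

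For the upper bound I would take $n = n_\ast$ and, via the characterization $\cat{f} \leq n \iff Bf$ lifts through the $n$-th Ganea fibration $p_n^{BH} : G_n(BH) \to BH$, run obstruction theory on a CW model of $BG$. Because $BH$ is aspherical, the fibre of $p_n^{BH}$ is $(\Omega BH)^{\ast(n+1)} \simeq H^{\ast(n+1)}$, a wedge of $n$-spheres, hence $(n-1)$-connected with $\pi_n \cong I(H)^{\otimes(n+1)}$; moreover the monodromy on the homotopy groups of the fibre of $(Bf)^\ast p_n^{BH}$ factors through $f$, so every coefficient system occurring is a $\mathbb{Z}H$-module pulled back along $f$. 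Thus every obstruction lives in $H^m(BG; A)$ with $m \geq n+1$ and $A$ a $\mathbb{Z}H$-module, and --- by naturality of obstruction theory under pullback of fibrations, implemented through the Moore--Postnikov tower of $p_n^{BH}$ --- is the image under $f^\ast$ of a class on $BH$. Since $f^\ast \beta_{\subsize{H}}^{n+1} = 0$, Lemma \ref{Berstein-Schwarz Lemma} makes $f^\ast$ vanish on all of $H^m(BH; A)$ for $m \geq n+1$; hence every obstruction vanishes, the lift exists, and $\cat{f} \leq n_\ast$.

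The main obstacle is the naturality claim in the upper bound: the first obstruction is unambiguously $f^\ast$ of the universal class (equal to $\beta_{\subsize{H}}^{n+1}$ by the Berstein--Schwarz description of $\cat{BH}$), but the higher obstructions depend on the chosen partial lifts, and to keep them inside $\mathrm{im}\,f^\ast$ one needs the successive $k$-invariants of $p_n^{BH}$ to themselves be pulled back from $BH$ --- a property that is not automatic for a general fibration, which is exactly why the upper bound is stated only as a conjecture. It does become tractable when $H$ has an economical model. For $H$ free one has $\cd{H} = 1$, so $\cat{f} \leq \cat{BH} = 1$ and it only remains to check that $f^\ast \beta_{\subsize{H}} = 0$ forces $f$, hence $Bf$, to be trivial (using that $\mathbb{Z}H$ is a domain). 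For $H = \mathbb{Z}^m$ one uses the exterior-algebra structure of $H^\ast(T^m)$ together with the explicit form of $\beta_{\mathbb{Z}^m}$ to match the cup-length lower bound with a geometric factorization of $Bf$ through a subtorus, which supplies the upper bound directly.
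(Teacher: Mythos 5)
Your proposal follows essentially the same route as the paper's partial proof: the cup-length argument gives $\cat{f} \geq \max\{k \mid f^\ast\beta_{\subsize{H}}^k \neq 0\}$, and for the reverse inequality one runs obstruction theory against the Ganea fibration $p_n^{BH}$, where the $(n-1)$-connectivity of the fibre $\ast^{n+1}\Omega BH$ together with naturality of the primary obstruction and Lemma \ref{Berstein-Schwarz Lemma} kills the first obstruction. You also flag exactly the gap the paper leaves open --- the higher obstructions depend on the chosen partial lift and are not known to lie in $\mathrm{im}\, f^\ast$ --- which is precisely why the statement is only a conjecture, verified there in special cases (e.g.\ $\dim BG \leq n+1$, $G$ free, free abelian groups) much as in your closing remarks.
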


Unfortunately, all that has been produced so far is a partial proof, which is provided below:

\begin{proof}
The inequality $\cat{f} \geq \max \{ k \mid f^\ast \beta_{\subsize{H}}^k \neq 0 \}$ is immediate by the cup length lower bound for $\cat{f}$. Thus, suppose that $f^\ast \beta_{\subsize{H}}^{n+1} = 0$. Using the Ganea-Schwarz approach to prove $\cat{f} \leq n$, it is sufficient to find a lift $\lambda$ for the diagram below:
\[\begin{tikzcd}[row sep = 3em]
&G_{n}(BH) \arrow[d, "p_{n}^{BH}"]
\\
BG \arrow[ur, dashed, "\lambda"] \arrow[r, swap, "Bf"]
&BH
\end{tikzcd}\]
The fiber $\ast^{n+1} \Omega BH$ of $p_n^{BH}$ is $(n-1)$-connected so that $\lambda$ can immediately be extended to the $n$-skeleton of $BG$. Now let $\theta^{n+1} \in H^{n+1}\left( G; \pi_{n} \left( \ast^{n+1}\Omega BH \right) \right)$ be the primary obstruction to extending the lift $\lambda$ to the $(n+1)$-skeleton of $BG$, and let $\phi^{n+1} \in H^{n+1}\left( H; \pi_{n} \left( \ast^{n+1}\Omega BH \right) \right)$ be the primary obstruction to extending a section of $p_n^{BH}$ to the $(n+1)$-skeleton of $BH$. Then
\[ \theta^{n+1} = f^\ast \phi^{n+1} = 0\]
by Lemma \ref{Berstein-Schwarz Lemma} and the naturality of the obstruction cocycle. Therefore, we can extend the lift $\lambda$ to the $(n+1)$-skeleton of $BG$.
\end{proof}

It may be possible to finish this partial proof using higher order obstructions, but for now it gives us a full proof for the following two special cases:

\begin{Corollary}
Let $f:G \to H$ be homomorphism between discrete groups. If $f^\ast \beta_{\subsize{H}}^n \neq 0$ and $f^\ast \beta_{\subsize{H}}^{n+1} = 0$ with $\dim BG \leq n+1$, then $\cat{f} = n$.
\end{Corollary}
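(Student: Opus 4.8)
The plan is to observe that the partial proof of Conjecture \ref{LS Conjecture} already carries out essentially all of the work, and that the extra hypothesis $\dim BG \le n+1$ is exactly what is needed to close the single gap left open in that argument. Recall that the partial proof establishes two facts: the universal lower bound $\cat{f} \ge \max\{k \mid f^\ast\beta_{\subsize{H}}^k \ne 0\}$, and the statement that whenever $f^\ast\beta_{\subsize{H}}^{n+1}=0$ one can construct a partial lift $\lambda$ of $Bf$ through the Ganea fibration $p_n^{BH}$ over the $(n+1)$-skeleton of $BG$. The only thing missing for the full conjecture is extending $\lambda$ over the higher skeleta; under our dimension hypothesis there are none.

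First I would record the lower bound. Since $f^\ast\beta_{\subsize{H}}^n\ne 0$, the cup-length lower bound for $\cat{f}$ (invoked in the first line of the partial proof of Conjecture \ref{LS Conjecture}) gives $\cat{f}\ge n$.

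Next I would produce the lift. By the Ganea--Schwarz characterization of $\cat{f}$ it suffices to lift $Bf:BG\to BH$ through $p_n^{BH}:G_n(BH)\to BH$. The fiber $\ast^{n+1}\Omega BH$ of $p_n^{BH}$ is $(n-1)$-connected, so a partial lift $\lambda$ exists over the $n$-skeleton of $BG$, and the primary obstruction to extending it over the $(n+1)$-skeleton lies in $H^{n+1}\big(G;\pi_n(\ast^{n+1}\Omega BH)\big)$. By naturality of the obstruction cocycle this obstruction equals $f^\ast\phi^{n+1}$, where $\phi^{n+1}\in H^{n+1}\big(H;\pi_n(\ast^{n+1}\Omega BH)\big)$ is the corresponding primary obstruction to sectioning $p_n^{BH}$ over the $(n+1)$-skeleton of $BH$. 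Since $f^\ast\beta_{\subsize{H}}^{n+1}=0$, Lemma \ref{Berstein-Schwarz Lemma} forces $f^\ast\phi^{n+1}=0$, so $\lambda$ extends over the $(n+1)$-skeleton of $BG$.

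Finally, this is where $\dim BG\le n+1$ is used: the $(n+1)$-skeleton of $BG$ is all of $BG$, so $\lambda$ is already a genuine lift of $Bf$ defined on all of $BG$. Hence $\cat{f}\le n$, and together with the lower bound we get $\cat{f}=n$. I do not expect any real obstacle here, since the hard part of the general conjecture — the potentially nonzero higher obstructions — simply does not arise once $\dim BG\le n+1$; the only point demanding care is the standard fact that the primary obstruction $\phi^{n+1}$ for $BH$ is well-defined independently of the chosen partial section and that its pullback governs the extension of $\lambda$, but this is routine primary-obstruction theory and is exactly what the partial proof already uses. In short, the corollary is "the partial proof plus a dimension count."
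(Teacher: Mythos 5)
Your proposal is correct and is exactly the argument the paper intends: the lower bound $\cat{f}\ge n$ comes from $f^\ast\beta_{\subsize{H}}^n\ne 0$ via the cup-length bound, and the partial proof of Conjecture \ref{LS Conjecture} already produces the lift of $Bf$ through $p_n^{BH}$ over the $(n+1)$-skeleton, which under $\dim BG\le n+1$ is all of $BG$, giving $\cat{f}\le n$. This matches the paper's (implicit) proof of the corollary, so there is nothing further to add.
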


\begin{Corollary}
Let $f:G \to H$ be a homomorphism between discrete groups such that $G$ is free. Then $\cat{f} = 1$ if and only if $f^\ast \beta_{\subsize{H}} \neq 0$, i.e., Conjecture \ref{LS Conjecture} holds.
\end{Corollary}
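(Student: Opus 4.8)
The plan is to derive this from the partial proof of Conjecture~\ref{LS Conjecture} above, together with the observation that a free group has a one-dimensional classifying space, which is exactly what is needed to make that partial proof complete.

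\textbf{Setup and reduction.} Since $G$ is free, $BG$ may be taken to be a wedge of circles, so $\dim BG \le 1$; being path-connected, $BG$ is $0$-connected, hence $\cat{BG} \le \dim BG \le 1$ by the dimension bound for LS-category, and therefore $\cat{f} \le \cat{BG} \le 1$. Moreover $\cat f \ge 0$ since $G$ is nonempty, and $H^k(G;A)=0$ for all $k\ge 2$ and all coefficient modules $A$, so $f^\ast\beta_{\subsize{H}}^{\,k}=0$ for $k\ge 2$ while $f^\ast\beta_{\subsize{H}}^{\,0}$ is the nonzero unit class. Consequently the conjectured equality $\cat f = \max\{k\mid f^\ast\beta_{\subsize{H}}^{\,k}\ne 0\}$ is, for this particular $f$, equivalent to the single biconditional $\cat f = 1 \iff f^\ast\beta_{\subsize{H}}\ne 0$. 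So it suffices to prove that one biconditional.

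\textbf{The two implications.} For $f^\ast\beta_{\subsize{H}}\ne 0 \Rightarrow \cat f = 1$: the cup-length lower bound for $\cat f$ used in the partial proof gives $\cat f \ge 1$, and combined with $\cat f \le 1$ from the setup this yields $\cat f = 1$. For the converse $\cat f = 1 \Rightarrow f^\ast\beta_{\subsize{H}}\ne 0$, I would run the Ganea--Schwarz lifting argument of the partial proof with $n=0$: assuming $f^\ast\beta_{\subsize{H}}=0$, the primary obstruction $\theta^1\in H^1\!\big(G;\pi_0(\ast^1\Omega BH)\big)$ to lifting $Bf$ through $p_0^{BH}$ equals $(Bf)^\ast\phi^1$ and vanishes by Lemma~\ref{Berstein-Schwarz Lemma} and naturality of the obstruction cocycle; since $\dim BG\le 1$ there are no further obstructions, so the lift exists over the $1$-skeleton and hence over all of $BG$, giving $\cat f \le 0$ and contradicting $\cat f = 1$. (Equivalently, since $G_0(BH)=P_0(BH)$ is contractible, a lift of $Bf$ through $p_0^{BH}$ would say $Bf$ is nullhomotopic, i.e.\ $f$ is trivial, whence $f^\ast\beta_{\subsize{H}}=0$ — so one may also argue this direction by contraposition via the first of the two corollaries following the partial proof, applied with $n=0$.)

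\textbf{Main point and expected difficulty.} The content here is not a new computation but the recognition that one-dimensionality of $BG$ forces the partial proof of Conjecture~\ref{LS Conjecture} to terminate after the primary obstruction — the higher obstructions, whose control is the missing ingredient in general, simply do not exist. I do not anticipate a serious obstacle; the only delicate point is that the obstruction-theoretic step runs with $n=0$, where $\pi_0$ of the fibre is merely a pointed set, but this is treated exactly as in the preceding corollary and can in any case be sidestepped by the elementary ``lift through $P_0(BH)$ iff nullhomotopic'' observation for maps out of a wedge of circles.
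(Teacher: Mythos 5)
Your proposal is correct and follows essentially the same route as the paper, which offers no separate argument for this corollary but treats it as an immediate consequence of the partial proof of Conjecture~\ref{LS Conjecture} (equivalently, of the preceding corollary applied with $n=0$ and $n=1$): one direction from the cup-length lower bound together with $\cat{f}\leq\cat{BG}\leq\dim BG\leq 1$, the other from the vanishing of the primary obstruction when $f^\ast\beta_{\subsize{H}}=0$, with one-dimensionality of $BG$ killing all higher obstructions. The $n=0$ delicacy you flag (the fibre's $\pi_0$ being only a group, not a $\mathbb{Z}H$-module) is present in the paper's own implicit argument as well and is handled there in exactly the same spirit.
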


Additionally, we can show the case of free abelian groups:

\begin{Theorem} \label{free abelian category}
Let $f:\mathbb{Z}^n \to \mathbb{Z}^m$ be a group homomorphism. Then
\[ \cat{f} = \max \{ k \mid f^\ast \beta_{\subsize{\mathbb{Z}^m}}^k \neq 0\} = \text{\rm rank}(f).\]
\end{Theorem}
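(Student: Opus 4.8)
The plan is to prove both equalities by squeezing the three quantities between $r:=\text{\rm rank}(f)$. Write $\beta=\beta_{\subsize{\mathbb{Z}^m}}$ for short. The cup length lower bound for $\cat{Bf}$ already supplies $\max\{k\mid f^\ast\beta^k\neq0\}\leq\cat{f}$ (the easy half of Conjecture \ref{LS Conjecture}), so it is enough to prove the upper bound $\cat{f}\leq r$ and the nonvanishing $f^\ast\beta^r\neq0$; these combine into $r\leq\max\{k\mid f^\ast\beta^k\neq0\}\leq\cat{f}\leq r$, forcing equality everywhere.

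For the upper bound I would factor $f$ through its image. Since $\mathrm{im}(f)$ is a subgroup of the free abelian group $\mathbb{Z}^m$ it is free abelian, and its rank is $r$ by the definition of $\text{\rm rank}(f)$; fixing an isomorphism $\mathrm{im}(f)\cong\mathbb{Z}^r$, write $f=j\circ\bar f$ with $\bar f:\mathbb{Z}^n\twoheadrightarrow\mathbb{Z}^r$ the corestriction and $j:\mathbb{Z}^r\hookrightarrow\mathbb{Z}^m$ the inclusion. On classifying spaces $Bf\simeq Bj\circ B\bar f$ factors through $B\mathbb{Z}^r=T^r$, so using the standard composition inequality $\cat{gh}\leq\min\{\cat{g},\cat{h}\}$ together with $\cat{B\bar f}\leq\cat{T^r}$ we get $\cat{f}=\cat{Bj\circ B\bar f}\leq\cat{B\bar f}\leq\cat{T^r}=\cd{\mathbb{Z}^r}=r$.

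For the lower bound I would obtain $f^\ast\beta^r\neq0$ from the contrapositive of Lemma \ref{Berstein-Schwarz Lemma}: it suffices to produce a $\mathbb{Z}[\mathbb{Z}^m]$-module $A$ and a class $u\in H^r(\mathbb{Z}^m;A)$ with $f^\ast u\neq0$. I take $A=\mathbb{Q}$ with trivial action, so $H^\ast(\mathbb{Z}^m;\mathbb{Q})=H^\ast(T^m;\mathbb{Q})$ is the exterior algebra on $H^1$, and likewise for $\mathbb{Z}^n$. Under these identifications $f^\ast$ on $H^1$ is $\mathrm{Hom}(f,\mathbb{Q})$, the transpose of $f\otimes\mathbb{Q}$, which has rank $r$; choose $u_1,\dots,u_r\in H^1(\mathbb{Z}^m;\mathbb{Q})$ whose images $f^\ast u_1,\dots,f^\ast u_r\in H^1(T^n;\mathbb{Q})$ are linearly independent. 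A wedge of linearly independent degree-one classes is nonzero in an exterior algebra, so $f^\ast(u_1\cup\cdots\cup u_r)=f^\ast u_1\cup\cdots\cup f^\ast u_r\neq0$, and $u:=u_1\cup\cdots\cup u_r$ does the job. By Lemma \ref{Berstein-Schwarz Lemma}, $f^\ast\beta^r\neq0$, which closes the squeeze.

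The genuinely delicate point is this last step. The class $\beta^r$ lives in cohomology with the twisted coefficients $I(\mathbb{Z}^m)^{\otimes r}$, so $f^\ast\beta^r$ is not manifestly a product of integers or a determinant, and cannot be evaluated directly the way one computes in $H^\ast(T^n;\mathbb{Q})$; Lemma \ref{Berstein-Schwarz Lemma} is precisely the bridge translating the transparent exterior-algebra computation into the twisted nonvanishing statement. Everything else — that subgroups of $\mathbb{Z}^m$ are free of rank $r$, that $f^\ast$ on $H^1$ is the transpose of $f$ and hence of rank $r$, and that a wedge of independent $1$-dimensional classes survives in the torus — is routine linear algebra.
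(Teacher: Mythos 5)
Your proof is correct, and while it shares the paper's overall skeleton (cup-length lower bound for $\cat{Bf}$ plus Lemma \ref{Berstein-Schwarz Lemma} to convert an ordinary-coefficient computation into nonvanishing of $f^\ast\beta^k$), the two halves are implemented differently from the paper. For the upper bound the paper splits $\mathbb{Z}^n \cong \ker(f)\oplus\mathrm{im}(f)$ and writes $f = f'\oplus g\oplus h$ with $f'$ injective, then combines $\cat{f'} = j$ with the product inequality $\cat{f\times g}\le\cat{f}+\cat{g}$; you instead factor $f$ through its image and use $\cat{Bj\circ B\bar f}\le\cat{B\bar f}\le\cat{T^r}=r$, which is shorter and avoids having to justify $\cat{f'}=j$ separately. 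For the cohomological step the paper puts $f$ in Smith normal form and evaluates $f^\ast(\hat c_1\cup\cdots\cup\hat c_j)$ against $b_1\otimes\cdots\otimes b_j$ by a cap-product computation over $\mathbb{Z}$, getting $\alpha_1\cdots\alpha_j\neq0$; you work with trivial $\mathbb{Q}$ coefficients, observe that $f^\ast$ on $H^1$ has rank $r$, and use that a product of linearly independent degree-one classes is nonzero in the exterior algebra $H^\ast(T^n;\mathbb{Q})$ — a purely linear-algebraic argument that needs no normal form. A further structural difference: your single nonvanishing class $u=u_1\cup\cdots\cup u_r$ simultaneously yields $\cat{f}\ge r$ (via the cup-length bound) and $f^\ast\beta^r\neq0$ (via the contrapositive of Lemma \ref{Berstein-Schwarz Lemma}), so one squeeze $r\le\max\{k\mid f^\ast\beta^k\neq0\}\le\cat{f}\le r$ settles both equalities at once, whereas the paper establishes $\cat{f}=\mathrm{rank}(f)$ and the Berstein--Schwarz statement in two separate passes. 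Both arguments are sound; yours is arguably the more economical, the paper's has the minor virtue of exhibiting an explicit integral evaluation.
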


\begin{proof}
As before the inequality $\cat{f} \geq \max \{ k \mid f^\ast \beta_{\subsize{\mathbb{Z}^m}}^k \neq 0\}$ is immediate by the cup length lower bound for category. Let $j \leq m$ be the rank of $\text{im}(f)$ and note that $\mathbb{Z}^n = \ker (f) \oplus \text{im} (f)$; hence, we can break up $f$ into parts $f = f' \oplus g \oplus h$ where $f':\mathbb{Z}^j \to \mathbb{Z}^j$, $g:\mathbb{Z}^{n-j} \to 0$, and $h:0 \to \mathbb{Z}^{m-j}$ where $f'$ is injective. For all three of these maps we can easily calculate their LS-categories: $\cat{f'} = j$, $\cat{g} = 0$, and $\cat{h} = 0$. Therefore,
\[ j
= \cat{f'}
\leq \cat{f}
\leq \cat{f'} + \cat{g} + \cat{h}
= j \]
so that $\cat{f} = j = \text{rank}(f)$.

Now suppose that $f^\ast \beta^{k+1}_{\subsize{\mathbb{Z}^m}} = 0$. To finish the proof, it suffices to show that $j \leq k$. Note that we can choose generators $b_1,...,b_n$ of $\mathbb{Z}^n$ and $c_1,...,c_m$ of $\mathbb{Z}^m$ and integers $0 \neq \alpha_1,...,\alpha_j \in \mathbb{Z}$ such that $f(b_i) = \alpha_i c_i$ for $i \leq j$ and $f(b_i) = 0$ for $i > j$. Since $\mathbb{Z}^n$ and $\mathbb{Z}^m$ are abelian, it follows that $Bf_\ast = f$ on $H_1$. Thus, we can choose the dual elements $\hat{c}_1,...,\hat{c}_j \in H^1(\mathbb{Z}^m)$ of $c_1,...,c_j \in H_1(\mathbb{Z}^m)$. Next note that $f^\ast(\hat{c}_1 \cup ... \hat{c}_j) \neq 0$ because
\[ f_\ast \left( \left( b_1 \otimes ... \otimes b_j \right) \cap f^\ast \left( \hat{c}_1 \cup ...\cup \hat{c}_j \right) \right)
= f_\ast \left( b_1 \otimes ... \otimes b_j \right) \cap \left( \hat{c}_1 \cup ... \cup \hat{c}_j \right)\]
\[= \left( \alpha_1 c_1 \otimes ... \otimes \alpha_j c_j \right) \cap \left( \hat{c}_1 \cup ... \cup \hat{c}_j \right)
= \alpha_1 ... \alpha_j
\neq 0.\]
Then $j \leq k$ by Lemma \ref{Berstein-Schwarz Lemma} since $0 \neq f^\ast(\hat{c}_1 \cup ... \cup \hat{c}_j) \in H^j( \mathbb{Z}^n)$ and $f^\ast \beta_{\subsize{\mathbb{Z}^m}}^{k+1} = 0$.
\end{proof}

\begin{Lemma} \label{category homomorphism covering}
Let $f:G \to H$ be a group homomorphism. Then $\cat{f} = \secat{(Bf)^\ast u_{\subsize{H}}}$, where $u_{\subsize{H}}:EH \to BH$ is the universal cover of $BH$.
\end{Lemma}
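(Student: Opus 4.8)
The plan is to exhibit a homotopy-commutative square relating the universal cover $u_{\subsize{H}}:EH \to BH$ to the $n$-th Ganea fibration $p_n^{BH}:G_n(BH) \to BH$ with $n = \cat{H}$, and then transport $\secat$ along pullbacks using Proposition \ref{sectional pullback} and Proposition \ref{genus invariance}. Recall that $EH \simeq P_0(BH)$ since $EH$ is the universal cover of the aspherical space $BH$, and $u_{\subsize{H}}$ is a fibration whose fiber is $\Omega BH$ up to homotopy; in fact the based path fibration $\pi^{BH}:P_0(BH) \to BH$ \emph{is} a fibration replacement of $u_{\subsize{H}}$ in the sense that there is a homotopy equivalence $EH \to P_0(BH)$ over $BH$. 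Hence by Proposition \ref{genus invariance} (applied in both directions, since a homotopy equivalence is in particular a homotopy domination), $\secat{(Bf)^\ast u_{\subsize{H}}} = \secat{(Bf)^\ast \pi^{BH}}$, and the right-hand side is exactly $\cat{Bf} = \cat{f}$ by the property $\cat{f} = \secat{f^\ast p_0^Y}$ listed among the properties of $\cat{f}$ (this is Theorem $19'$ of \cite{Schwarz}, quoted in the introduction). That identification is the whole content of the lemma, so the proof is essentially a bookkeeping argument about fibration replacements.

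Concretely, the steps I would carry out are: (1) recall that for aspherical $BH$ the universal cover $u_{\subsize{H}}:EH \to BH$ and the based path fibration $\pi^{BH}:P_0(BH) \to BH$ are fiber-homotopy equivalent over $BH$ — both have contractible total space and fiber homotopy equivalent to $\Omega BH \simeq H$ as a discrete set, and the comparison map can be built by lifting; (2) pull both fibrations back along $Bf:BG \to BH$, noting that fiber-homotopy equivalence is preserved by pullback, so $(Bf)^\ast u_{\subsize{H}}$ and $(Bf)^\ast \pi^{BH}$ are fiber-homotopy equivalent over $BG$; (3) apply Proposition \ref{genus invariance} with $h = Id_{BG}$ to conclude $\secat{(Bf)^\ast u_{\subsize{H}}} = \secat{(Bf)^\ast \pi^{BH}}$; (4) invoke the cited property $\cat{f} = \secat{f^\ast p_0^Y}$ with $Y = BH$ and $f = Bf$ to finish. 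One could alternatively bypass (1)–(2) by observing directly that $EH \to BH$ already satisfies the hypotheses needed to be "a" based path fibration up to homotopy, since $BH$ is a $K(H,1)$, and that Schwarz's Theorem $19'$ only uses the fibration up to fiber homotopy type.

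The main obstacle — really the only point requiring care — is step (1): making precise that the universal cover of an aspherical space is fiber-homotopy equivalent to the based path fibration, and in particular that a genuine fiberwise map $EH \to P_0(BH)$ exists realizing this. The cleanest route is to note $EH$ is itself a model for $P_0(BH)$: fix a basepoint, and the covering $EH \to BH$ classifies the trivial subgroup, while $P_0(BH) \to BH$ is a fibration with total space contractible; a fiberwise map exists by the lifting property of the fibration $\pi^{BH}$ applied to the map $EH \to BH$ together with a null-homotopy of $EH$, and it is a fiber-homotopy equivalence because it induces a homotopy equivalence on fibers (discrete set $H$ to $\Omega BH \simeq H$) and both bases are the same — then one appeals to the standard fact (e.g. Dold's theorem) that a fiberwise map inducing homotopy equivalences on fibers between fibrations over the same base is a fiber-homotopy equivalence. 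Everything after that is formal. I do not expect any genuinely hard estimate here; the lemma is a translation statement whose purpose is to re-express $\cat{f}$ in a form convenient for the applications in the following subsection.
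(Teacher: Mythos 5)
Your proposal is correct and follows essentially the same route as the paper: both reduce to the identity $\cat{f}=\secat{(Bf)^\ast p_0^{BH}}$, compare $u_{\subsize{H}}$ with the based path fibration over $BH$ using the contractibility of both total spaces, and then pull back along $Bf$ and apply Proposition \ref{genus invariance} in both directions. The only real difference is the device used for the comparison map: the paper lifts $p_0^{BH}$ through the covering map via the covering-space lifting criterion (the total space $P_0BH$ being simply connected) and notes that the resulting map between contractible spaces is a homotopy equivalence, whereas you lift $u_{\subsize{H}}$ through $\pi^{BH}$ by the homotopy lifting property and invoke Dold's theorem to upgrade to a fiber homotopy equivalence --- slightly heavier machinery, but it keeps everything strictly fiberwise, which in fact smooths over the paper's brief appeal to the universal property of pullbacks for a square that only commutes up to homotopy.
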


\begin{proof}
Consider the following pullback diagrams:
\[\begin{tikzcd}
(Bf)^\ast EH \arrow[r] \arrow[d, swap, "(Bf)^\ast u_{\scaleto{H}{3.5pt}}"]
& EH \arrow[d, "u_{\scaleto{H}{3.5pt}}"]
&(Bf)^\ast P_0BH \arrow[r] \arrow[d, swap, "(Bf)^\ast p_0^{BH}"]
& P_0BH \arrow[d, "p_0^{BH}"]
\\
BG \arrow[r, swap, "Bf"]
& BH
& BG \arrow[r, swap, "Bf"]
& BH
\end{tikzcd}\]
Note that $\cat{Bf} = \secat{(Bf)^\ast p_0^{BH}}$ and our goal is to prove that $\cat{Bf}  = \secat{(Bf)^\ast u_{\subsize{H}}}$, so by Proposition \ref{genus invariance} it's sufficient to show that $(Bf)^\ast p_0^{BH}$ has a homotopy lift with respect to $(Bf)^\ast u_{\subsize{H}}$ and vice-versa. Then by the universal property of pullbacks, it suffices to prove that $p_0^{BH}$ has a homotopy lift with respect to $u_{\subsize{H}}$ and vice-versa. Since $P_0BH$ is contractible, $p_0^{BH}$ trivially satisfies the lifting criterion for the covering map $u_{\subsize{H}}$ so that there is some lift $L$ of $p_0^{BH}$ with respect to $u_{\subsize{H}}$. Since $P_0BH$ and $EH$ are both contractible, $L$ must be a homotopy equivalence with some homotopy inverse $\overline{L}$. Then
\[ p_0^{BH} \overline{L}
= \left( u_{\subsize{H}} L \right) \overline{L}
\simeq u_{\subsize{H}} Id_{EH}
= u_{\subsize{H}} \]
so that $\overline{L}$ is a homotopy lift of $u_{\subsize{H}}$ with respect to $p_0^{BH}$. Therefore, $\cat{Bf}  = \secat{(Bf)^\ast u_{\subsize{H}}}$.
\end{proof}

Now with this Lemma, if we assume $f$ is injective, then we get the following:

\begin{Theorem} Let $f:H \to G$ be an injective group homomorphism with $\cd{H} < \infty$. Then $\cat{f} = \cat{H}$ and so Conjecture \ref{LS Conjecture} holds.
\end{Theorem}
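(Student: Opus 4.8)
The statement has two parts: the equality $\cat{f}=\cat{H}$, and the resulting verification of Conjecture \ref{LS Conjecture} for $f$. One direction of the first part is free: applying the general bound $\cat{g}\le\cat{X}$ for a map $g\colon X\to Y$ to $Bf\colon BH\to BG$ gives $\cat{f}=\cat{Bf}\le\cat{BH}=\cat{H}$. So the real content is $\cat{f}\ge\cat{H}$, together with the identification of $\cat{f}$ with the cup length.

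The plan for $\cat{f}\ge\cat{H}$ is to route everything through Lemma \ref{category homomorphism covering}. Applied to $f$ itself it gives $\cat{f}=\secat{(Bf)^\ast u_{\subsize{G}}}$, where $u_{\subsize{G}}\colon EG\to BG$ is the universal cover; applied to $\mathrm{id}_H$ it gives $\cat{H}=\cat{\mathrm{id}_H}=\secat{(B\,\mathrm{id}_H)^\ast u_{\subsize{H}}}=\secat{u_{\subsize{H}}}$. Hence it suffices to prove $\secat{(Bf)^\ast u_{\subsize{G}}}=\secat{u_{\subsize{H}}}$, and I would obtain this from Proposition \ref{genus invariance} by exhibiting fibrewise maps in both directions over $\mathrm{id}_{BH}$. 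In one direction, the universal property of the pullback reduces the task of building a map $EH\to(Bf)^\ast EG$ over $BH$ to lifting the simply connected space $EH$ through the covering $u_{\subsize{G}}$ along $Bf\circ u_{\subsize{H}}$, which is automatic; this gives $\secat{(Bf)^\ast u_{\subsize{G}}}\le\secat{u_{\subsize{H}}}$. In the other direction I would show that every path component $C$ of $(Bf)^\ast EG$ is simply connected: it is a connected covering of $BH$, so $\pi_1(C)$ injects into $H$; the canonical map $(Bf)^\ast EG\to EG$ exhibits the composite $C\to BH\xrightarrow{Bf}BG$ as lifting through $u_{\subsize{G}}$, forcing the image of $\pi_1(C)$ in $G$, hence $f$ of its image in $H$, to be trivial, and injectivity of $f$ then gives $\pi_1(C)=1$. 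Consequently $(Bf)^\ast EG\to BH$ lifts through $u_{\subsize{H}}$, yielding a map $(Bf)^\ast EG\to EH$ over $BH$ and hence $\secat{u_{\subsize{H}}}\le\secat{(Bf)^\ast u_{\subsize{G}}}$. This is precisely where injectivity is used, and I expect it to be the step demanding the most care, since a pullback of a connected covering is typically disconnected.

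For the clause ``Conjecture \ref{LS Conjecture} holds'', I would combine the equality just proved, $\cat{f}=\cat{H}=\cd{H}$ (Eilenberg--Ganea), with the Dranishnikov--Rudyak--Schwarz description $\cd{H}=\max\{k\mid\beta_{\subsize{H}}^k\neq0\}$ and the cup-length lower bound, which already gives $\cat{f}\ge\max\{k\mid f^\ast\beta_{\subsize{G}}^k\neq0\}$. It therefore remains to see $f^\ast\beta_{\subsize{G}}^{\,\cd{H}}\neq0$, for which I would compare $f^\ast\beta_{\subsize{G}}$ with $\beta_{\subsize{H}}$ directly. Choosing right coset representatives of $H$ in $G$ realises $\mathbb{Z}G$ as a free $\mathbb{Z}H$-module, whence $I(G)/I(H)\cong\mathbb{Z}G/\mathbb{Z}H$ is a free, hence projective, $\mathbb{Z}H$-module, so the $\mathbb{Z}H$-module inclusion $\iota\colon I(H)\hookrightarrow\mathrm{Res}^G_H I(G)$ splits, say by $r$. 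The morphism of $\mathbb{Z}H$-module extensions from $0\to I(H)\to\mathbb{Z}H\to\mathbb{Z}\to0$ to the restriction of $0\to I(G)\to\mathbb{Z}G\to\mathbb{Z}\to0$ (inclusions in the left two slots, identity on $\mathbb{Z}$) gives $\iota_\ast\beta_{\subsize{H}}=f^\ast\beta_{\subsize{G}}$ in $H^1(H;\mathrm{Res}^G_H I(G))$, so $r_\ast f^\ast\beta_{\subsize{G}}=\beta_{\subsize{H}}$; by naturality of the cup product in the coefficients, $(r^{\otimes k})_\ast(f^\ast\beta_{\subsize{G}})^k=\beta_{\subsize{H}}^k$ for every $k$. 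Hence $\beta_{\subsize{H}}^k\neq0$ implies $f^\ast\beta_{\subsize{G}}^k\neq0$, so $\max\{k\mid f^\ast\beta_{\subsize{G}}^k\neq0\}\ge\cd{H}=\cat{f}$, and combined with the reverse inequality this pins down $\cat{f}=\max\{k\mid f^\ast\beta_{\subsize{G}}^k\neq0\}$, which is Conjecture \ref{LS Conjecture} for $f$.
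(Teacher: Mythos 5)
Your proposal is correct, and while it shares the paper's overall skeleton (Lemma \ref{category homomorphism covering} plus Proposition \ref{genus invariance}, reducing everything to producing a fibrewise map $(Bf)^\ast EG \to EH$ over $BH$), the way you produce that map is genuinely different, and you prove more. The paper constructs the lift by taking $Ef:EH \to EG$, noting both total spaces are contractible, choosing a homotopy inverse $L$ of $Ef$, and declaring $L\,\overline{Bf}$ to be a homotopy lift of $(Bf)^\ast u_{\subsize{G}}$ through $u_{\subsize{H}}$; injectivity of $f$ enters only through the remark that $\overline{Bf}$ is injective, and the verification that $u_{\subsize{H}} L \overline{Bf} \simeq (Bf)^\ast u_{\subsize{G}}$ is not spelled out. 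You instead argue by covering space theory: the pullback $(Bf)^\ast EG \to BH$ is a covering, each path component has fundamental group injecting into $H$ and mapped to $1$ in $G$ via the factorization through the simply connected space $EG$, so injectivity of $f$ forces each component to be simply connected, and the lifting criterion then gives a strict lift $(Bf)^\ast EG \to EH$ over $BH$. This makes the role of injectivity transparent and yields a commuting (not merely homotopy-commuting) square, which is arguably tighter than the paper's argument. In addition, you supply the bridge from $\cat{f}=\cat{H}$ to Conjecture \ref{LS Conjecture}, which the paper asserts but does not prove: using that $\mathbb{Z}G$ is $\mathbb{Z}H$-free, you split the inclusion $I(H) \hookrightarrow \mathrm{Res}^G_H I(G)$, identify $f^\ast\beta_{\subsize{G}}$ with the pushforward of $\beta_{\subsize{H}}$, and conclude $f^\ast\beta_{\subsize{G}}^{\,k}\neq 0$ whenever $\beta_{\subsize{H}}^{k}\neq 0$, so that $\max\{k \mid f^\ast\beta_{\subsize{G}}^{k}\neq 0\} \geq \cd{H} = \cat{f}$, matching the cup-length upper estimate. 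That extra argument is standard but necessary for the ``and so Conjecture \ref{LS Conjecture} holds'' clause, so your write-up is actually more complete than the paper's.
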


\begin{proof}
We immediately have $\cat{f} \leq \cat{H}$, so it's sufficient to prove $\cat{H} \leq \cat{f}$. By Lemma \ref{category homomorphism covering}, we know that $\cat{f} = \secat{(Bf)^\ast u_{\subsize{G}}}$. By applying Proposition \ref{genus invariance}, we can reduce proving $\cat{H} \leq \cat{f}$ to proving that there is a homotopy lift of $(Bf)^\ast u_{\subsize{G}}$ with respect to the covering map $u_{\subsize{H}}:EH \to BH$, i.e., we need to prove the dashed arrow exists in the following diagram:
\[\begin{tikzcd} [row sep = 3em, column sep = 3em]
EH \arrow[dr, swap, "u_{\scaleto{H}{3.5pt}}"] \arrow[rr, bend left, "Ef"]
& (Bf)^\ast EG \arrow[r, hook, swap, "\overline{Bf}"] \arrow[d, "(Bf)^\ast u_{\scaleto{G}{3.5pt}}"] \arrow[l, dashed]
& EG \arrow[d, "u_{\scaleto{G}{3.5pt}}"]
\\
& BH \arrow[r, swap, hook, "Bf"]
& BG
\end{tikzcd}\]
By the construction of $K(\pi,1)$-spaces, it follows that $Bf$ can be taken to be injective since $f$ is injective. Thus, the map $\overline{Bf}:(Bf)^\ast EG \to EG$ is injective. Since $EH$ and $EG$ are both contractible, it follows that $Ef$ is a homotopy equivalence with some homotopy inverse $L:EG \to EH$. Therefore, restricting $L$ to $(Bf)^\ast EG$, i.e., the map $L \overline{Bf}$, gives the homotopy lift we're looking for. Thus, $\cat{H} \leq \cat{f}$.
\end{proof}

\subsection{TC}

When the domain is an abelian group, the topological complexity reduces to the LS-category:

\begin{Theorem} \label{abelian group TC}
Let $f:G \to H$ be a homomorphism with $G$ abelian. Then $\cat{f} = \pullTC{f}$.
\end{Theorem}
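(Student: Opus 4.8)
The plan is to reduce the statement immediately to Theorem \ref{H-space}; the only point that needs a word of justification is that the Eilenberg--MacLane space $BG$ can be chosen to be an H-space when $G$ is abelian.

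First I would observe that a discrete abelian group $G$ is in particular a topological abelian group, and that the bar construction applied to a topological abelian group yields again a topological abelian group; hence $BG = K(G,1)$ can be realized with the structure of a topological abelian group, carrying a continuous multiplication $\mu : BG \times BG \to BG$ with a two-sided identity element $e$. In particular $\mu i_0$ and $\mu i_1$ are literally equal to $Id_{BG}$, so $BG$ satisfies Definition \ref{h space definition} and is an H-space. (Alternatively, one may simply cite that $K(A,n)$ is an infinite loop space, hence an H-space, for every abelian group $A$.)

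Next I would apply Theorem \ref{H-space} to the map $Bf : BG \to BH$, which exists and is unique up to homotopy by the correspondence between homomorphisms and homotopy classes of maps of Eilenberg--MacLane spaces recalled before the definition of $\cat{f}$ and $\pullTC{f}$ for homomorphisms. Since $X = BG$ is an H-space, Theorem \ref{H-space} gives $\pullTC{Bf} = \cat{Bf}$. Unwinding the definitions $\pullTC{f} = \pullTC{Bf}$ and $\cat{f} = \cat{Bf}$ then yields $\pullTC{f} = \cat{f}$, as claimed.

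I expect essentially no obstacle here: the entire argument is a direct invocation of Theorem \ref{H-space} together with the homotopy-invariant definitions of $\pullTC{}$ and $\cat{}$ for group homomorphisms, and the only classical input is the realization of $BG$ as an H-space for abelian $G$. If one wanted to avoid even that, one could instead note that the one-sided inequality $\cat{f} \leq \pullTC{f}$ from Proposition \ref{pullback category bounds} always holds, so it would suffice to prove $\pullTC{f} \leq \cat{f}$, which is exactly the content of the proof of Theorem \ref{H-space} specialized to $X = BG$.
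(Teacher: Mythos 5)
Your proposal is correct and matches the paper's argument exactly: the paper likewise notes that $BG$ is an H-space because $G$ is abelian and then invokes Theorem \ref{H-space} applied to $Bf$. Your extra justification of the H-space structure on $BG$ is fine but not needed beyond what the paper records.
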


\begin{proof}
Note that $BG$ is an H-space since $G$ is abelian. Therefore $\cat{f} = \pullTC{f}$ by Theorem \ref{H-space}.
\end{proof}

\begin{Corollary}
Let $f:\mathbb{Z}^n \to \mathbb{Z}^m$ be a homomorphism. Then $\cat{f} = \text{\rm rank}(f)$.
\end{Corollary}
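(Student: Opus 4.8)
The assertion coincides with the final equality in Theorem~\ref{free abelian category}, so one proof is to invoke that theorem verbatim. The plan instead is to give a shorter, more structural argument that uses only the algebraic factorization of $f$ together with the theorem proved earlier in this section stating that $\cat{h} = \cat{H}$ for an injective homomorphism $h$ out of a group $H$ with $\cd{H} < \infty$.

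First I would factor $f$ algebraically. Since $\text{im}(f)$ is a subgroup of the free abelian group $\mathbb{Z}^m$, it is itself free abelian, of rank $j := \text{\rm rank}(f)$; fixing an isomorphism $\mathbb{Z}^j \cong \text{im}(f)$, the inclusion becomes an injective homomorphism $\iota:\mathbb{Z}^j \to \mathbb{Z}^m$. The short exact sequence $0 \to \ker f \to \mathbb{Z}^n \to \mathbb{Z}^j \to 0$ splits because $\mathbb{Z}^j$ is free, so the quotient map $\pi:\mathbb{Z}^n \to \mathbb{Z}^j$ admits a section $\sigma$ with $\pi\sigma = Id_{\mathbb{Z}^j}$, and $f = \iota\pi$. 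Passing to classifying spaces gives $Bf \simeq B\iota\circ B\pi$, with $B\pi\circ B\sigma$ homotopic to the identity of $B\mathbb{Z}^j$.

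Next I would compare LS-categories. The elementary inequality $\cat{v\circ u} \le \cat{v}$ — cover the domain of $v$ by open sets on which $v$ is nullhomotopic and pull them back along $u$ — applied to the two factorizations above gives
\[ \cat{\iota} = \cat{Bf\circ B\sigma} \le \cat{Bf} = \cat{B\iota\circ B\pi} \le \cat{\iota},\]
where the first equality uses $Bf\circ B\sigma \simeq B\iota$ and the homotopy invariance of the LS-category of a map. Hence $\cat{f} = \cat{\iota}$. Since $\iota$ is injective and $\cd{\mathbb{Z}^j} = j < \infty$, the injective-homomorphism theorem gives $\cat{\iota} = \cat{\mathbb{Z}^j}$, and $\cat{\mathbb{Z}^j} = \cd{\mathbb{Z}^j} = j$ by Eilenberg--Ganea. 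Therefore $\cat{f} = j = \text{\rm rank}(f)$.

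I expect no real obstacle. The only points requiring care are the standard structure theory of finitely generated abelian groups (a subgroup of $\mathbb{Z}^m$ is free, and a surjection onto a free abelian group splits) and the inequality $\cat{v\circ u}\le\cat{v}$, which is not recorded in the Properties boxes but is immediate from the definition of $\cat{\cdot}$ for maps. Should one prefer to avoid these entirely, the corollary is immediate from Theorem~\ref{free abelian category}.
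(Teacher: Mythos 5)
Your argument is correct, and it takes a genuinely different route from the paper. In the paper this corollary is handled by citation: the equality $\cat{f}=\mathrm{rank}(f)$ is precisely the final assertion of Theorem \ref{free abelian category}, whose proof decomposes $f$ as a direct sum $f'\oplus g\oplus h$, gets the upper bound from the product inequality $\cat{f\times g}\le\cat{f}+\cat{g}$, and gets the lower bound from a cap/cup-product computation with dual generators (linked to the Berstein--Schwarz class via Lemma \ref{Berstein-Schwarz Lemma}). You instead factor $f=\iota\pi$ through its image, use the splitting $\sigma$ of $\pi$ together with the elementary composition inequality $\cat{v\circ u}\le\cat{v}$ and homotopy invariance of $\cat{\cdot}$ for maps to reduce to $\cat{f}=\cat{\iota}$, and then invoke the injective-homomorphism theorem and Eilenberg--Ganea to get $\cat{\iota}=\cat{\mathbb{Z}^j}=\cd{\mathbb{Z}^j}=j$; all the auxiliary facts you use (subgroups of $\mathbb{Z}^m$ are free, surjections onto free abelian groups split, $\cat{v\circ u}\le\cat{v}$) are standard and correctly justified. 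What this buys is a short, purely structural proof that avoids the cohomological computation entirely; its cost is dependence on the injective-homomorphism theorem (hence on Lemma \ref{category homomorphism covering}), and it does not recover the companion identification $\cat{f}=\max\{k\mid f^\ast\beta_{\mathbb{Z}^m}^k\neq 0\}$ that the paper's theorem also provides. One caveat: the paper's proof of this corollary cites Theorem \ref{abelian group TC} in addition to Theorem \ref{free abelian category}, which indicates the intended conclusion is really $\pullTC{f}=\mathrm{rank}(f)$ (the $\cat$ statement alone is literally contained in the earlier theorem); if that is the intended reading, your proof needs one extra line -- since $\mathbb{Z}^n$ is abelian, $\pullTC{f}=\cat{f}$ by Theorem \ref{abelian group TC} -- after which it covers the TC version as well.
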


\begin{proof}
This is immediate by Theorems \ref{abelian group TC} and \ref{free abelian category}.
\end{proof}

In \cite{Farber_3} and \cite{Farber_1}, it was shown that the topological complexity of a finite connected graph is the minimum of its first betti number and $2$. Since graphs are aspherical spaces with free fundamental group, it follows that $\TC{F(n)} = \min \{ 2,n \}$ where $F(n)$ is the free group on $n$-generators. For homomorphisms between free groups, the picture is a little bit more complicated but still achievable, but oddly enough the result still depends on the number of generators of the image.

\begin{Theorem} \label{Free Homomorphism TC}
Let $f:F(n) \to F(m)$ be a homomorphism between free groups. Then
\[\pullTC{f} = 
\begin{cases}
0 & \text{if } f = 0 \\
1 & \text{if } \text{\rm im} (f) \simeq \mathbb{Z} \\
2 & \text{otherwise}
\end{cases}\]
\end{Theorem}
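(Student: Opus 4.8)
The plan is to treat the three cases separately; essentially all the work is in the last one. If $f = 0$ then $Bf$ is nullhomotopic, so $\pullTC{f} = 0$ by Corollary \ref{pullback nullhomotopic}. In every case one has the a priori bound $\pullTC{f} \leq \min\{\TC{F(n)}, \TC{F(m)}\} \leq 2$ from Proposition \ref{pullback category bounds} and the formula $\TC{F(k)} = \min\{2,k\}$ recalled above. If $\mathrm{im}(f)\cong\mathbb{Z}$, factor $f = \iota g$ with $g\colon F(n)\twoheadrightarrow\mathbb{Z}$ the corestriction and $\iota\colon\mathbb{Z}\hookrightarrow F(m)$; on Eilenberg--MacLane spaces this realizes $Bf\simeq(B\iota)(Bg)$ with $B\iota\colon S^1\to BF(m)$, so the composition inequality and Proposition \ref{pullback category bounds} give $\pullTC{f}\leq\pullTC{B\iota}\leq\TC{S^1}=1$, while $\pullTC{f}\geq 1$ by Corollary \ref{pullback nullhomotopic} since $f\neq 0$ is not nullhomotopic; hence $\pullTC{f}=1$.

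It remains to show $\pullTC{f}=2$ when $Q:=\mathrm{im}(f)$ is free of rank $r\geq 2$ (a finitely generated subgroup of a free group is free of finite rank, and $r\leq 1$ falls into one of the previous cases). First I reduce to an injective homomorphism. Write $f=\iota g$ with $g\colon F(n)\twoheadrightarrow Q$ the corestriction and $\iota\colon Q\hookrightarrow F(m)$ the inclusion. Since $Q$ is free, hence projective, $g$ is split, so $Bg$ is a homotopy domination; thus $\pullTC{f}=\pullTC{B\iota}$ by Proposition \ref{pullback fibration replacement}. So it suffices to prove $\pullTC{B\iota}\geq 2$ for the inclusion $\iota\colon Q\hookrightarrow G:=F(m)$ of a free subgroup of rank $\geq 2$ (the bound $\leq 2$ being already known).

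For this I would apply the cohomological lower bound of Theorem \ref{pullback cohomology bound} to $\varphi:=B\iota\colon BQ\to BG$ --- note $BQ\simeq\bigvee_r S^1$ is a finite complex, hence an ANR --- with coefficients in a ``diagonal'' local system of the kind used by Grant, Lupton and Oprea for lower bounds on the topological complexity of aspherical spaces. Let $I:=\ker\bigl(\mathbb{Z}[G\times G/\Delta G]\to\mathbb{Z}\bigr)$, a $\mathbb{Z}[G\times G]$-module, and let $\mathfrak{v}\in H^1(BG\times BG;I)$ be the canonical class produced by the connecting homomorphism of $0\to I\to\mathbb{Z}[G\times G/\Delta G]\to\mathbb{Z}\to 0$; it satisfies $\Delta^\ast\mathfrak{v}=0$. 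Taking $u_0=u_1:=(\varphi\times\varphi)^\ast\mathfrak{v}\in H^1(BQ\times BQ;I)$, with $I$ viewed as a $\pi_1(BQ)^2$-module through $\iota\times\iota$, one has $u_i\in\mathrm{im}(\varphi\times\varphi)^\ast$ and $\Delta^\ast u_i=\iota^\ast\Delta^\ast\mathfrak{v}=0$, so by Theorem \ref{pullback cohomology bound} it is enough to show $u_0\cup u_1=(\varphi\times\varphi)^\ast(\mathfrak{v}^2)\neq 0$. To prove this I would use the combinatorial fact that a free subgroup $Q\leq G$ of rank $\geq 2$ is not contained in the union of the $G$-conjugates of a single cyclic subgroup, so that $Q$ contains elements $x,y$ no nontrivial power of either of which is conjugate in $G$ to a nontrivial power of the other (take $x\in Q$ arbitrary and $y\in Q$ whose cyclically reduced word in $G$ is not a power of the primitive root of $x$'s). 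Then $A:=\langle x\rangle$ and $B:=\langle y\rangle$ are infinite cyclic, $A\times B\leq G\times G$ acts freely on $G\times G/\Delta G$ (the stabilizer of a coset has the form $A\cap cBc^{-1}$, hence is trivial), and restricting $(\varphi\times\varphi)^\ast(\mathfrak{v}^2)$ along the inclusion $B(A\times B)=T^2\hookrightarrow BQ\times BQ$ gives, exactly as in the Grant--Lupton--Oprea computation, the fundamental class of $T^2$ in a direct summand of $H^2(T^2;I\otimes I)$, which is nonzero. Hence $u_0\cup u_1\neq 0$, and Theorem \ref{pullback cohomology bound} yields $\pullTC{B\iota}\geq 2$, so $\pullTC{f}=2$.

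The main obstacle is this last step: building the diagonal local system and checking that the Grant--Lupton--Oprea lower bound is natural enough to pass through the pullback $(\varphi\times\varphi)^\ast$ via Theorem \ref{pullback cohomology bound}, together with the group-theoretic input that every free subgroup of $F(m)$ of rank $\geq 2$ contains a pair of elements independent up to conjugacy. The vanishing case, the cyclic-image case, the bound $\pullTC{f}\leq 2$, and the reduction to an injective homomorphism are routine consequences of results already established.
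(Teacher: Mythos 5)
Your proposal handles the cases $f=0$, $\mathrm{im}(f)\cong\mathbb{Z}$, and the upper bound $\pullTC{f}\leq 2$ just as the paper does, but it takes a genuinely different route to the crucial lower bound $\pullTC{f}\geq 2$. The paper reduces to an injective map by restricting to a rank-two subgroup $\langle g_1,g_2\rangle$ of the domain (your reduction, splitting the corestriction onto the free image and invoking Proposition \ref{pullback fibration replacement}, is an equally valid alternative), and then argues with the canonical class $\nu_m\in H^1\left(F(m)\times F(m);I(F(m))\right)$ represented by $(g,h)\mapsto gh^{-1}-1$. Note that this is literally your class $\mathfrak{v}$: as a $G\times G$-set, $(G\times G)/\Delta G\cong G$, which identifies your coefficient module with $I(G)$ carrying the two-sided action and the connecting class with that cocycle. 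The difference is how nonvanishing of the pulled-back square is obtained. The paper uses naturality under the injective map, $(f\times f)^\ast\nu_m=I(f)_\ast\nu_2$, together with Costa's computation $\nu_2^2\neq 0$ and an injectivity/flatness argument for the coefficient map $I(f)\otimes I(f)$; no auxiliary subgroups of $G$ are needed. You instead restrict $(\varphi\times\varphi)^\ast\mathfrak{v}^2$ to $B(A\times B)=T^2$ for cyclic $A=\langle x\rangle,B=\langle y\rangle\leq Q$ whose conjugates meet trivially, in the style of Grant--Lupton--Oprea. That does mesh correctly with Theorem \ref{pullback cohomology bound} (your $u_0=u_1$ lie in $\ker\Delta^\ast\cap\mathrm{im}(\varphi\times\varphi)^\ast$, and the GLO argument is itself a restriction argument: the free orbit of the trivial coset gives a split embedding of coefficients $I(A\times B)\to I|_{A\times B}$ carrying $\beta_{A\times B}^2\neq 0$ to $\mathfrak{v}^2|_{A\times B}$), and it buys a statement that applies whenever the image contains such a pair $A,B$, at the price of extra group theory; the paper's version is more self-contained.

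The one step you should not wave at is that group-theoretic input. Your parenthetical recipe (``take $y$ whose cyclically reduced word is not a power of the primitive root of $x$'') is not sufficient: nontrivial powers of $x$ and $y$ are conjugate in $G$ exactly when their maximal roots are conjugate up to inversion, so you need $y$ not conjugate into $\langle c\rangle$ (its cyclic word not a rotation of a power of $c^{\pm 1}$), not merely ``not a power,'' and the existence of such a $y\in Q$ is precisely your claim that a noncyclic subgroup of $F(m)$ is not contained in $\bigcup_g g\langle c\rangle g^{-1}$. That claim is true but requires proof; for instance, maximal cyclic subgroups of a free group are malnormal, so distinct translates of the axis of $c$ in the Cayley tree overlap in uniformly bounded segments, and then for noncommuting $u,v\in Q$ and large $n$ the axis of $u^nv^n$ contains long subsegments of both axes, so $u^nv^n$ cannot be conjugate into $\langle c\rangle$. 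With that argument (or a citation) supplied, your proof goes through; as written it is the only genuine gap.
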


\begin{proof}
It's immediate that $\pullTC{f} = 0$ if and only if $f = 0$, so it suffices to show that $\pullTC{f} = 1$ if and only if $\text{im} (f) \simeq \mathbb{Z}$.

First suppose that $\text{im} (f) \simeq \mathbb{Z}$. Then $f$ is not nullhomotopic so that $\pullTC{f} \geq 1$. Note that we can factor $f$ into $if'$ where $i:\text{im}(f) \to F(m)$ is the inclusion map and $f':F(n) \to \text{im}(f)$ is the map restricting the codomain of $f$ to $\text{im}(f)$. Then it follows that
\[ \pullTC{f} \leq \pullTC{f'} \leq \TC{\mathbb{Z}} = \TC{S^1}  = 1\]
so that $\pullTC{f} = 1$.

Now suppose that $\text{im}(f)$ has at least two distinct generators, say $h_1$ and $h_2$, and let $g_1$ and $g_2$ be such that $f(g_i) = h_i$. Then $\left< g_1,g_2 \right>$ is a free group on two generators and
\[ \pullTC{f|_{\left< g_1,g_2 \right>}} \leq \pullTC{f} \leq \TC{F(n)} \leq 2 \]
so that it suffices to prove $\pullTC{f} = 2$ when $n=2$ so that $f$ is injective. Consider the cohomology class
\[ \nu_m \in H^1 \left( F(m) \times F(m);I(F(m)) \right) \]
given by the representative map $(g,h) \mapsto gh^{-1} - 1$. Since $f$ is injective, it follows that $m \geq 2$ so that $\nu_m^2 \neq 0$ by \cite{Costa}. Note that $\nu_m \in \ker \Delta_H^\ast$ so that it suffices to prove that $(f \times f)^\ast \nu_m^2 \neq 0$ by Theorem \ref{pullback cohomology bound}. Note that we define $\nu_2$ in the same way as $\nu_m$ and similarly $\nu_2^2 \neq 0$. Since $f$ is injective, it induces an inclusion map $I(f):I(F(2)) \to I(F(m))$, and on cohomology $f \times f$ simply induces a restriction on the representatives
\[ [h:F(m) \times F(m) \to I(F(m))] \mapsto [h|_{\text{im}(f \times f)}].\]
Thus, it's easy to compute that
\[ (f \times f)^\ast \nu_m = I(f) \nu_2 \]
so that
\[ (f \times f)^\ast \nu_m^2 = \left( I(f) \otimes I(f) \right) \nu_2^2.\]
Recall that $I(F(2))$ is a free $\mathbb{Z}$-module so that, in particular, it is a flat $\mathbb{Z}$-module. Therefore, $I(f) \otimes I(f)$ is injective since $I(f)$ is injective. Thus, $(f \times f)^\ast \nu_m^2 \neq 0$ since $\nu^2_2 \neq 0$, so that $\pullTC{f} = 2$.
\end{proof}

\begin{Remark}
For $1$-dimensional CW-complexes, Theorem \ref{Free Homomorphism TC} implies that the topological complexity of a map can be characterized by what spaces the map can factor through up to homotopy, i.e., a point when $\pullTC{f} = 0$ and $S^1$ when $\pullTC{f} \leq 1$.
\end{Remark}

\footnotesize
\bibliographystyle{plain}
\bibliography{mapTC}

\end{document}